\documentclass[10pt, a4paper]{amsart}
\usepackage{fancyhdr}
\usepackage{stmaryrd}
\usepackage{calrsfs}

\usepackage{amsmath}
\usepackage[nospace,noadjust]{cite}
\usepackage{amsfonts}
\usepackage{amssymb,enumerate}
\usepackage{amsthm}
\usepackage{comment}
\usepackage{color}
\usepackage[all]{xy}
\usepackage{hyperref}
\usepackage{lineno}
\usepackage{tikz-cd}
\usepackage{MnSymbol}
\usetikzlibrary{decorations.pathreplacing}

\newtheorem{theorem}{Theorem}[section]
\newtheorem{prop}[theorem]{Proposition}

\newtheorem{lemma}[theorem]{Lemma}
\newtheorem{cor}[theorem]{Corollary}
\newtheorem*{main-theorem}{Main Theorem}

\theoremstyle{definition}
\newtheorem{definition}[theorem]{Definition}
\newtheorem{remark}[theorem]{Remark}

\numberwithin{equation}{section}

\DeclareMathOperator{\Frac}{Frac}
\DeclareMathOperator{\lcm}{lcm}
\DeclareMathOperator{\Mat}{Mat}
\DeclareMathOperator{\Diag}{Diag}
\DeclareMathOperator{\mult}{mult}
\DeclareMathOperator{\spec}{Spec}
\DeclareMathOperator{\mspec}{mSpec}
\DeclareMathOperator{\supp}{Supp}
\DeclareMathOperator{\msupp}{mSupp}

\hypersetup{
	pdftitle={LCM splittings},
	colorlinks=true,
	linkcolor=blue,
	citecolor=red,
	urlcolor=blue
}

\keywords{lcm splitting, gcd, B\'{e}zout domain, good factorization, adequate, elementary divisor}

\subjclass[2020]{Primary: 13A05, 13F15, 13G05; Secondary: 06F20, 13P05, 14A05}

\begin{document}
\mbox{}
\title{LCM splittings in GCD Domains}
\author{Thomas Preu}
\address{Institut f\"{u}r Mathematik\\UZH\\CH-8057 Z\"{u}rich}
\email{preu@math.uzh.ch}
\date{\today}

\begin{abstract}
In a GCD ring $R$, every pair of elements $a,b\in R$ admits a greatest common divisor $\gcd(a,b)$ and a least common
multiple $\lcm(a,b)$. Motivated by a question on Mathematics Stack Exchange by Ash Seifert~\cite{Sei22}, I introduce
the notion of an lcm splitting $c,d\in R$ for $a,b$ satisfying $cd\sim\lcm(a,b), c\mid a, d\mid b, \gcd(c,d)\sim1$. I
show that adequate B\'{e}zout domains always admit lcm splittings, give examples of B\'{e}zout domains in which some pair
has no lcm splitting, and make further remarks on this seemingly new property.
\end{abstract}
\medskip

\maketitle
{\small \tableofcontents}

\bigskip
\newpage
\section{LCM splittings in GCD rings}
\label{sec:gcdDomains}

The notion of $\gcd$ for integers is classic, giving rise to arguably the first proper known algorithm, the Euclidean
algorithm from at least 300 BC computing it. GCD rings generalize this notion to rings, which in this paper are
always associative, commutative and unital.

\begin{definition} \label{def:gcd}
Let $R$ be a ring and $a,b\in R$ two elements. 
\begin{enumerate}
\item[(a)] A \textbf{greatest common divisor} $g=\gcd(a,b)\in R$ for the pair $a,b$ is a common divisor ($g\mid a, g\mid b
  \Leftrightarrow \exists a',b'\in R: a=a'g, b=b'g$) such that any other common divisor $d\in R$ of $a$ and $b$
  necessarily divides $g$, i.e. $d\mid g$.
\item[(b)] $R$ is a \textbf{GCD ring} if every pair of elements admits a gcd.
\item[(c)] A \textbf{least common multiple} $l=\lcm(a,b)\in R$ is a common multiple ($a\mid l$ and $b\mid l$) such that any
  other common multiple $m\in R$ of $a$ and $b$ is necessarily a multiple of $l$, i.e. $l\mid m$.
\end{enumerate}
\end{definition}

Gcds, when they exist, are only defined \textbf{up to associates}, i.e. only up to multiplication by a unit $u\in
R^{\times}$. Explicitly, for $a,b\in R$, I will write $a\sim_R b$ if $\exists u\in R^{\times}: ua=b$. I
will simply write $a\sim b$ when the ring is clear from the context. It is a standard result that this is an
equivalence relation, i.e. it is reflexive, symmetric, and transitive. I assume that $\gcd(a,b)$ is some fixed choice
among all valid associates.
Consequently, for $u,v\in R^{\times}$, we have $\gcd(ua,vb)\sim\gcd(a,b)$, but equality need not hold. 
Lcms behave similarly.

Motivated by a question on Mathematics Stack Exchange by Ash Seifert~\cite{Sei22}, I define the following notions,
which I could not find in the literature despite their seemingly obvious nature.

\begin{definition}\label{def:lcmSplit}
Let $R$ be a GCD ring and $a,b\in R$ two elements. 
\begin{enumerate}
\item[(a)] An \textbf{lcm splitting} of $a,b$, if it exists, consists of a pair $c,d\in R$ such that $cd\sim\lcm(a,b),
  c\mid a, d\mid b$ and $\gcd(c,d)\sim1$. The fourth and last condition is called the \textbf{splitting condition}.
\item[(b)] $R$ is \textbf{lcm splitting} if every pair of elements admits an lcm splitting.
\end{enumerate}
\end{definition}

The main contributions of this paper are
\begin{itemize}
\item the definitions of lcm splitting just given,
\item a theorem that adequate (defined below) implies lcm splitting for domains, and
\item two explicit examples of B\'{e}zout domains that are not lcm splitting.
\end{itemize}
Some of the presented ideas were developed in dialog with chatbots, cf. Sec.~\ref{sec:ackChatbots}.

\bigskip
\section{Preliminaries, related notions and results}
\label{sec:Preliminaries}

Most rings considered will be \textbf{integral domains}, i.e., they have no nontrivial zero divisors. A \textbf{GCD domain} is
a GCD ring that is an integral domain. Domains are canonically embedded in a \textbf{fraction field
  $R\subseteq\Frac(R)$}. This allows one to write certain elements of $R$ as fractions with nonzero denominator, simplifying
notation. E.g., if $\gcd(a,b)\neq 0$ and $a'\in R$ satisfies $a=a'\gcd(a,b)$, then the \textbf{fraction
  $a'=\frac{a}{\gcd(a,b)}\in\Frac(R)$} is well-defined and, moreover, lies in $R$.

In Prop.~\ref{prop:gcdIdentities} in the appendix, I collect some standard identities for gcd and lcm for $a, b, c$
elements of a GCD domain $R$, assumed to be nonzero when in doubt. Most references deal with the special case
$R=\mathbb{Z}$, so I will supply (references to) proofs in the general context to be self-contained.

\begin{remark}\label{rmk:multipleGCD}
I will apply the identities and properties of Prop.~\ref{prop:gcdIdentities} by labeling steps with the corresponding numbers
throughout the paper. Numbers in parentheses in equation-like statements without further explanation will always refer to
Prop.~\ref{prop:gcdIdentities}.

Associativity (7) lets us define \textbf{multiple gcds and lcms} iteratively. E.g. $\gcd(a,b,c)=\gcd(\gcd(a,b),c)$ or
$\lcm(a,b,c,d)=\lcm(\lcm(\lcm(a,b),c),d)$. 
\end{remark}

Next, I recall some classical notions used in the subsequent sections, followed by references for details. Then I recall
several related facts, introducing further classical terminology.

\begin{definition} \label{def:GCDproperties}
\begin{enumerate}
\item A GCD ring $R$ is a \textbf{B\'{e}zout ring} if all its finitely generated ideals are principal. In
  particular (and ultimately equivalently), for any $a,b\in R$ the ideal identity $(a,b)=(\gcd(a,b))$ holds. In general,
  only ``$\subseteq$'' is guaranteed. In the language of the generalized Euclidean algorithm, the ideal identity (and
  thus the B\'{e}zout property) can equivalently be stated as $\exists c,d\in R: ad-bc=\gcd(a,b)$.\\
A \textbf{B\'{e}zout domain} is a B\'{e}zout ring that is also an integral domain.
\item \label{def:gf} A GCD ring $R$ has \textbf{good factorization (gf)} if for all $a, b \in R\setminus\{0\}$ there exist
  $c,d\in R$ with $a = cd, \gcd(c,b)\sim1$ and $b\in\sqrt{(d)}$.
\item A ring $R$ is \textbf{adequate} if for all $a,b\in R\setminus\{0\}$ there exist $c,d\in R$ with $a=cd,
  bR+cR=R$ and, for all $d'\in R$, the inclusions $dR\subseteq d'R\subsetneq R$ imply $bR+d'R\subsetneq R$.\\
If $R$ is a B\'{e}zout ring, this is equivalent to: For all $a,b\in R\setminus\{0\}$ there exist
$c,d\in R$ with $a=cd, \gcd(c,b)\sim1$ and for all non-units $d'\in R\setminus R^{\times}$ the divisibility $d'\mid d$
implies $\gcd(d',b)\not\sim 1$. Note the similarity to the definition of gf.
\item For an $m\times n$-matrix $A\in\Mat_{m,n}(R)$ with coefficients in a ring $R$ ($m,n\in\mathbb{N}_{>0}$),
  we say that $A$ admits a \textbf{diagonal reduction} if there are invertible (over $R$) matrices $P\in\Mat_{m,m}(R),
  Q\in\Mat_{n,n}(R)$ such that $PAQ=\Diag((d_i)_{i=1}^{\min(m,n)})$ is diagonal with $d_i\mid d_{i+1}$ for all 
  $1\leq i\leq\min(m,n)-1$. $R$ has the \textbf{elementary divisor} property if all (finite-dimensional) matrices admit diagonal reductions.
\item For $A\in\Mat_{m,n}(R)$ over a ring $R$ ($m,n\in\mathbb{N}_{>0}$), we say that $A$ admits a
  \textbf{triangular reduction} if there is an invertible (over $R$) matrix $Q\in\Mat_{n,n}(R)$ such that
  $AQ$ is lower triangular (arbitrary entries on the diagonal are allowed). $R$ is \textbf{K-Hermite} if all
  (finite-dimensional) matrices admit triangular reductions. 
\end{enumerate}
\end{definition}

I will say that a pair $(a,b)$ satisfies resp. violates gf or adequate if the respective property holds resp. fails for $(a,b)$.
Anderson~\cite{And00} provides a survey of \textbf{divisibility theory for GCD domains}. Cohn~\cite{Coh68} gives results
about \textbf{B\'{e}zout domains} (Cohn calls GCD domains HCF domains and B\'{e}zout
domains B\'{e}zout rings). Gillman and Henriksen~\cite{GH56A} (they call B\'{e}zout rings F-rings and
K-Hermite Hermite), Lam~\cite[Ch. 1, Def. 4.23]{Lam06} and L'Innocente and Point~\cite[Sec. 5.2]{LP20} elaborate on the
properties \textbf{K-Hermite, elementary divisor, adequate and good factorization}. A textbook by
Cohn~\cite[Ch. 6.5]{Coh74} discusses some elementary properties of divisibility, gcds and lcms.

In the literature, gf and adequate are often used only in the narrower context of B\'{e}zout domains. Quoting van den
Dries and Macintyre~\cite[Sec. 2.10]{vDM90} and adjusting notation: ``In intuitive terms, writing \textit{prime factor of
  $x$} for \textit{maximal ideal containing $x$}, this implies that the prime factors of $c$ are those of $a$ that do
not occur in $b$, while the prime factors of $d$ are exactly the common prime factors of $a$ and $b$.''

Good factorization resp. adequate allow one to split a ring element into factors multiplicatively well aligned with
another ring element. This is close in spirit to the idea of an lcm splitting. Indeed, in Sec.~\ref{sec:adequLcmSplitting} 
I prove adequate (and gf) implies lcm splitting. \cite[Sec. 2.10]{vDM90} also sketches an
example, due to R. Wiegand, in which gf fails. In Sec.~\ref{sec:counterexampleLCMSplitting}, this is my starting point
for the first counterexample: a B\'{e}zout domain that is not lcm splitting. Note
that gf and adequate are not symmetric: $(a,b)$ can be gf resp. adequate while $(b,a)$ is not, cf. 
Prop.~\ref{prop:RIsNotAdequate}. However, the definition of lcm splitting is clearly symmetric.

\newpage
\begin{remark}\label{rmk:propertiesOfGCDDomains}
To be self-contained, I give an overview of standard results on GCD domains.
\begin{itemize}
\item In GCD domains, lcms always exist, cf. \cite[Thm 2.1]{Coh68} or \cite[Thm 2.1 (3)]{And00} for general
  statements about integral domains.
\item GCD domains that satisfy the \textbf{ascending chain condition for principal ideals (ACCP)} are \textbf{UFDs
  (unique factorization domains)}. In particular, all \textbf{Noetherian} GCD domains are UFDs. However, there are
  non-Noetherian UFDs. In general, we have the following sequence of proper implications (meaning that there are
  counterexamples to their converses) between ring properties:
\[
\text{ UFD }\Rightarrow
\text{ GCD domain }\Rightarrow
\textbf{ Schreier domain }\Rightarrow
\textbf{ integrally closed domain }\Rightarrow
\text{ domain.}
\]
\item 
B\'{e}zout domains are exactly those GCD domains that are \textbf{Pr\"{u}fer} (i.e. all finitely generated ideals are
  invertible). Any integral domain is the intersection of its \textbf{localizations} at maximal ideals resp. prime ideals. Moreover,
\begin{gather*}
R \text{ is a Prüfer ring }\Leftrightarrow\\
\forall \mathfrak{p} \in\spec(R): R_{\mathfrak{p}} \text{ is a valuation ring }\Leftrightarrow\\
\forall \mathfrak{m} \in\mspec(R): R_{\mathfrak{m}} \text{ is a valuation ring.}
\end{gather*}
For a B\'{e}zout domain $R$ and a maximal ideal $\mathfrak{m}$, the \textbf{valuation} of $R_{\mathfrak{m}}$ restricts to
a valuation $v_{\mathfrak{m}}$ on $R$. Valuations of this type characterize $\gcd$ and $\lcm$ up to units:
\[
v_{\mathfrak{m}}(\gcd(a,b))=
\min\left(v_{\mathfrak{m}}(a),v_{\mathfrak{m}}(b)\right) \text{ and }
v_{\mathfrak{m}}(\lcm(a,b))=
\max\left(v_{\mathfrak{m}}(a),v_{\mathfrak{m}}(b)\right).
\]
\item For a B\'{e}zout domain $R$, the references above give the implications
\[
\text{UFD }\Rightarrow
\text{ gf }\stackrel{\text{\tiny \cite[p. 460]{LP20}}}{\Rightarrow}
\text{ adequate }\stackrel{\text{\tiny \cite[p. 86]{RRW24}}}{\Rightarrow}
\text{ elementary divisor }\stackrel{\text{\tiny \cite[p. 363]{GH56A}}}{\Rightarrow}
\text{ K-Hermite }\stackrel{\text{\tiny \cite[ch. 1, Cor. 4.28]{Lam06}}}{\Leftrightarrow}
\text{ B\'{e}zout domain.}
\]
The implication ``UFD $\Rightarrow$ gf'' is an almost immediate consequence of comparing the prime factorizations of
$a$ and $b$, constructing $d$ from the prime factors of $a$ that appear in $b$, and $c$ from those that do not. For ``gf
$\Rightarrow$ adequate'', see also Lem.~\ref{lem:gfImpliesAdequateForPairs}. Note that some of the
implications hold under weaker assumptions than $R$ being a B\'{e}zout domain, in particular the last equivalence. See
the references for the proper level of generality. Some works study these properties for general rings,
which need not be commutative or domains.
Note that diagonal reduction needs to be checked only for matrix dimensions $(m,n)\in\{(1,2), (2,1), (2,2)\}$ for the
elementary divisor property to hold, cf. \cite[Thm. 2]{GH56A}. Similarly, K-Hermite needs to be checked only for
$(m,n)\in\{(1,2)\}$, cf. \cite[ch. 1, Prop. 4.24]{Lam06}.

A classic example of a non-UFD, gf domain is the ring of algebraic integers $R_3=\overline{\mathbb{Z}}$,
cf. App.~\ref{sec:furtherExamples}. I was not able to find an example of a non-gf, adequate domain in the
literature. 
I also do not know whether there are non-adequate, elementary divisor domains that are lcm splitting.
In Sec.~\ref{sec:counterexampleLCMSplitting}, I will give
two examples of non-adequate, elementary divisor domains that are not lcm splitting. Recently, H\"{a}gg and
M\"{o}rtberg~\cite{HM26} announced an example of a B\'{e}zout domain that is not an elementary divisor domain -- I do
not know whether it is lcm splitting.

Let $R$ be a B\'{e}zout domain and $a,b\in R\setminus\{0\}$. If a pair $(c,d)\in R^2$ satisfies the gf
resp. adequate conditions for $(a,b)$, then it is unique up to associates, cf. Cor.~\ref{cor:uniquenessInGFSplitting}
and Lem.~\ref{lem:uniquenessInAdequateSplitting}.

\item General GCD domains may be \textbf{inertly embedded} into B\'{e}zout domains, i.e. without changing the
  divisibility and thus $\gcd$-properties too much, see \cite{And00} and \cite{Coh68}. This suggests, on an informal
  level, that most questions about divisibility for GCD domains may already be answered through B\'{e}zout domains.
\end{itemize}
\end{remark}

\begin{remark}\label{rem:firstObsLCMSplitting}
I give some first observations on the lcm splitting property. I remind the reader of the convention in
Rem.~\ref{rmk:multipleGCD} about numbers in parentheses.
\begin{itemize}
\item In Def.~\ref{def:lcmSplit}, replacing $cd\sim\lcm(a,b)$ by $cd=\lcm(a,b)$ does not change the concept.
\item For $a=0$ or $b=0$ (or both), lcm splitting is always satisfied. Assume $a=0$ and set $c=0,d=1$. Then $cd=0\cdot
  1=0\stackrel{\text{\tiny (4), (6)}}{=}\lcm(0,b)=\lcm(a,b)$, $c=0\mid 0=a$, $d=1\mid b$ since $1$ divides every ring element,
  and $\gcd(c,d)=\gcd(0,1)\stackrel{\text{\tiny (4)}}{\sim}1$. Similarly for $a=0=b$. We can wlog assume that
  $a\neq0\neq b$. E.g., below $\gcd(a,b)\neq 0$ is assumed in terms like $\frac{b}{\gcd(a,b)}$.
\item LCM splittings need not be unique, not even up to associates. E.g., in $R=\mathbb{Z}$, for $a=2\cdot 3=6, b=2\cdot 5=10$,
  any of $(c,d)\in\{(\pm6,\pm5), (\pm3,\pm10)\}$ is an lcm splitting.
\item Without the splitting condition, the other three properties can always easily be satisfied, e.g.,
  by $(c,d)=\left(a,\frac{b}{\gcd(a,b)}\right)$. Use the product formula (5) for the first condition. The two
  divisibility conditions are trivially satisfied. The following example in $R=\mathbb{Z}$ (with gcds chosen nonnegative)
  demonstrates that this approach does not guarantee the splitting condition: for $a=2^2\cdot 3=12, b=2\cdot 3^2=18$, we
  have $\frac{b}{\gcd(a,b)}=\frac{18}{6}=3$ and thus $\gcd\left(a,\frac{b}{\gcd(a,b)}\right)=\gcd(12,3)=3\not\sim1$.
\item One could try to iterate this division by common factors until the evolving pair $(c,d)$
  becomes coprime. For this, start with $(c_0,d_0)=\left(a,\frac{b}{\gcd(a,b)}\right)$. If $\gcd(c_0,d_0)\not\sim1$,
  intuitively speaking, $a$ and $b$ have a ``common \textbf{irreducible (atomic)} factor'' that has higher multiplicity
  in $b$ than in $a$, so it is present in $a$, but also ``survives'' the division by $\gcd(a,b)$ in $d_0$. The idea is to shift
  enough factors back from $c_i$ to $d_i$ until $(c_i,d_i)$ become coprime. For this, define
  $(c_{i+1},d_{i+1})=\left(\frac{c_i}{\gcd(c_i,d_i)},d_i\cdot\gcd(c_i,d_i)\right)$ -- note that this also makes sense
  if $(c_i,d_i)$ are already coprime.\label{page:lcmSplittingAlgo}\\
This algorithm\footnote{Technically, whether this effective procedure is an algorithm depends on the domain $R$. Recall
that most definitions of an algorithm require guaranteed termination, which is not available here in general.} gives us an lcm splitting if the sequence of $(c_i,d_i)$ stabilizes after finitely many steps (a rigorous proof of
this claim is left to the reader). This is guaranteed, for instance, if the ascending chain of principal ideals
$(c_0)\subseteq(c_1)\subseteq\ldots\subseteq(c_i)\subseteq\ldots$ always stabilizes, i.e. if the ACCP holds for
enough sequences. As discussed above, GCD domains satisfying the ACCP are
exactly UFDs. Hence \textbf{UFDs are lcm splitting}. The ring of entire functions $R_2=\mathcal{O}(\mathbb{C})$ is an lcm
splitting domain, for which this procedure does not terminate after finitely many steps, cf. App.~\ref{sec:furtherExamples}.
\item We can see that UFDs are lcm splitting in a different way, using the fact that UFDs admit a unique factorization into
  irreducible (atomic) elements as was intuitively alluded to before. For $a,b$, such unique factorizations can be written
  in the following form:
\[
a=\prod_{i=1}^lp_i^{e_i}\cdot \prod_{i=1}^mq_i^{f_i}\cdot \prod_{i=1}^nr_i^{g_i},\quad
b=\prod_{i=1}^lp_i^{e_i'}\cdot \prod_{i=1}^mq_i^{f_i'}\cdot \prod_{i=1}^nr_i^{g_i'},
\]
where all the $p_i,q_i,r_i$ are pairwise coprime, $e_i>e_i'\geq0$, $f_i=f_i'>0$, and $0\leq g_i<g_i'$. The atoms $p_i$ need to go to
$c$, the $r_i$ to $d$, and the (full power of each) $q_i$ can be distributed arbitrarily. Two
possibilities for an lcm splitting (that coincide if $m=0$) are given by
\[
(c,d)=\left(\prod_{i=1}^lp_i^{e_i}\cdot \prod_{i=1}^mq_i^{f_i},\quad\prod_{i=1}^nr_i^{g_i'}\right),\quad
(c',d')=\left(\prod_{i=1}^lp_i^{e_i},\quad\prod_{i=1}^mq_i^{f_i'}\cdot \prod_{i=1}^nr_i^{g_i'}\right).
\]
It is easily verified that these pairs constitute lcm splittings. This is a direct way to see that UFDs are lcm
splitting. However, the other approach also has its merits. It gives an algorithm for UFDs whenever gcds are computable,
which is often the case, while a direct algorithm for unique factorization is often unavailable or more expensive.
\end{itemize}
\end{remark}

\bigskip
\section{Adequate implies lcm splitting}
\label{sec:adequLcmSplitting}

I remind the reader of the convention in Rem.~\ref{rmk:multipleGCD} about numbers in parentheses.

\begin{lemma}\label{lem:adequateFactorsAreCoprime}
Let $R$ be a GCD domain. Given $a, b\in R\setminus\{0\}$ and some $c, d \in R$ satisfying the adequate conditions,
i.e. $a = cd, \gcd(c,b)\sim1$ and $\forall d'\in R\setminus R^{\times}:d'\mid d\Rightarrow \gcd\left(d',b\right)\not\sim
1$, we have $\gcd(c,d)\sim1$.
\end{lemma}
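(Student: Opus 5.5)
Set $g=\gcd(c,d)$ and argue by contradiction, assuming $g$ is not a unit. The idea is to use $g$ as the test element $d'$ in the third adequate condition.

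First, $g\mid d$ by definition of the gcd. Since $g$ is a non-unit, the third condition applied with $d'=g$ gives $\gcd(g,b)\not\sim1$.

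Second, $g\mid c$. Hence any common divisor of $g$ and $b$ is also a common divisor of $c$ and $b$. So $\gcd(g,b)$ divides $\gcd(c,b)\sim1$, which makes $\gcd(g,b)$ a unit. If one prefers to cite Prop.~\ref{prop:gcdIdentities}, associativity (7) gives the same conclusion: $\gcd(g,b)=\gcd(\gcd(c,d),b)\sim\gcd(d,\gcd(c,b))\sim\gcd(d,1)\sim1$. This contradicts $\gcd(g,b)\not\sim1$ from the first step.

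Therefore $g$ must be a unit, i.e. $\gcd(c,d)\sim1$. Only the divisibility facts and the defining property of the gcd are used, so the argument works in any GCD domain. Neither the hypothesis $a=cd$ nor any B\'{e}zout property is needed.

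One degenerate case deserves a remark. If $d=0$, then $a=cd=0$, which is excluded. So $d\neq0$, and therefore $g\neq0$ and $g$ is a genuine nonzero non-unit. In fact the argument does not even depend on this. There is no real obstacle; the only care needed is to handle the associate classes correctly, namely that a divisor of a unit is itself a unit.
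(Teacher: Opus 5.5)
Your proof is correct and is essentially the paper's argument: you take $d'=\gcd(c,d)$ in the third adequate condition to get $\gcd(\gcd(c,d),b)\not\sim1$, and contradict it with $\gcd(\gcd(c,d),b)\mid\gcd(c,b)\sim1$, which the paper justifies via (6), (11) and (9). Your remark that neither $a=cd$ nor any B\'{e}zout property is used also matches the paper's proof.
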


\begin{proof}
Assume for contradiction that $g=\gcd(c,d)\not\sim1$. By the definition of the gcd, $g=\gcd(c,d)\mid d$. Hence the
adequate property for $d'=g\not\sim1$ implies $\gcd(g,b)\not\sim1$. But also $g=\gcd(c,d)\mid c$, so
\[
\gcd(g,b)=
\gcd(\gcd(c,d),b)\stackrel{\text{\tiny (6), (11)}}{\mid}
\gcd(c,b)\sim
1 \stackrel{\text{\tiny (9)}}{\Rightarrow}
\gcd(g,b)\sim
1.
\] 
This contradiction shows that the assumption is false. Thus $g=\gcd(c,d)\sim1$.
\end{proof}

\begin{theorem} \label{thm:adequateImpliesLCMSplitting}
An adequate B\'{e}zout domain $R$ is lcm splitting.
\end{theorem}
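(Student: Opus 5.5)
The plan is to apply the adequate property not to $(a,b)$ directly, but to the pair $(\lcm(a,b), a')$, where $a'=\frac{a}{\gcd(a,b)}$. Applying it to $(a,b)$ itself does not work: for $a=p^3$, $b=p$ we would get $c=1$, $d=p^3$, and the excess multiplicity in $d$ would break any attempt to use $b$ or a divisor of it as the second factor.

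By Rem.~\ref{rem:firstObsLCMSplitting} we may assume $a\neq0\neq b$. Put $g=\gcd(a,b)$, $a'=\frac{a}{g}$, $b'=\frac{b}{g}$ and $l=\lcm(a,b)$, all nonzero. Adequacy of $R$ applied to $(l,a')$ gives $c,d\in R$ with the following properties:
\begin{itemize}
\item $l=cd$;
\item $\gcd(c,a')\sim1$;
\item every non-unit $d'\mid d$ satisfies $\gcd(d',a')\not\sim1$.
\end{itemize}
By Lem.~\ref{lem:adequateFactorsAreCoprime}, applied to this pair, $\gcd(c,d)\sim1$. So $(d,c)$ already satisfies the product condition $dc\sim\lcm(a,b)$ and the splitting condition. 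It remains to show $d\mid a$ and $c\mid b$.

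For the divisibility checks I will use the valuations $v_{\mathfrak m}$ of Rem.~\ref{rmk:propertiesOfGCDDomains}, for $\mathfrak m\in\mspec(R)$. The tool is the criterion that for nonzero $x,y\in R$,
\[
x\mid y \iff v_{\mathfrak m}(x)\le v_{\mathfrak m}(y) \text{ for all } \mathfrak m\in\mspec(R).
\]
This holds because the condition puts $\frac{y}{x}$ in every $R_{\mathfrak m}$, and $R$ is the intersection of these localizations. I will also need two facts about $a'$ and $b'$ at a fixed $\mathfrak m$. First, $\gcd(a',b')\sim1$ (identity (9)/(10) of Prop.~\ref{prop:gcdIdentities}), so at most one of $v_{\mathfrak m}(a'),v_{\mathfrak m}(b')$ is positive. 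Second, $v_{\mathfrak m}(a')>0$ iff $v_{\mathfrak m}(a)>v_{\mathfrak m}(b)$, and in that case $v_{\mathfrak m}(l)=v_{\mathfrak m}(a)$.

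To show $d\mid a$, take $\mathfrak m$ with $v_{\mathfrak m}(d)>0$; at other $\mathfrak m$ the inequality $v_{\mathfrak m}(d)\le v_{\mathfrak m}(a)$ is trivial.
\begin{itemize}
\item I need $v_{\mathfrak m}(a')>0$. Pick $x\in\mathfrak m\cap dR$ (for instance $x=d$) and set $d'=\gcd(d,x)$, a non-unit divisor of $d$. The third adequate condition gives that $\gcd(d',a')$ is a non-unit.
\item Turning ``$d$ and $a'$ share a non-unit divisor'' into ``they share this particular maximal ideal'' is the \textbf{main obstacle}. I would handle it using that in a B\'{e}zout domain $\gcd(x,y)\not\sim1$ iff $(x,y)$ lies in some maximal ideal.
\item Suppose $v_{\mathfrak m}(a')=0$. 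Then $(a')+\mathfrak m=R$. Writing $\mathfrak m$ as the union of principal ideals, choose $x\in\mathfrak m$ with $(a',x)=R$, and replace $x$ by $\gcd(x,d)$; this still lies in $\mathfrak m$ because $v_{\mathfrak m}(d)>0$, and it is still coprime to $a'$. This produces a non-unit $d'\mid d$ coprime to $a'$, contradicting adequacy.
\item Hence $v_{\mathfrak m}(a')>0$, so $v_{\mathfrak m}(d)\le v_{\mathfrak m}(l)=v_{\mathfrak m}(a)$.
\end{itemize}

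To show $c\mid b$, take $\mathfrak m$ with $v_{\mathfrak m}(c)>0$. Since $\gcd(c,a')\sim1$, the min-formula gives $v_{\mathfrak m}(a')=0$. Then $v_{\mathfrak m}(a)\le v_{\mathfrak m}(b)$, so $v_{\mathfrak m}(l)=v_{\mathfrak m}(b)$ and $v_{\mathfrak m}(c)\le v_{\mathfrak m}(cd)=v_{\mathfrak m}(b)$.

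Combining the two divisibility checks with $dc\sim l$ and $\gcd(d,c)\sim1$, the pair $(d,c)$ is an lcm splitting of $a,b$.
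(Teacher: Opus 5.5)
Your proof is correct, and it takes a genuinely different and shorter route than the paper.

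\textbf{The paper's route.} It applies adequacy twice: first to $(g,\frac{b}{g})$, giving $g=g_1g_2$, then to $(g_2,\frac{a}{g})$, giving $g_2=g_1'g_2'$. It needs the extra step (E) to show $g_1'\sim1$, and then assembles $c=\frac{a}{g}g_1$, $d=g_2'\frac{b}{g}$. All coprimality and divisibility checks stay inside the gcd calculus of Prop.~\ref{prop:gcdIdentities}.

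\textbf{Your route.} You apply adequacy once, to $(\lcm(a,b),\frac{a}{g})$. By Lem.~\ref{lem:adequateFactorsAreCoprime}, the resulting factorization $l=cd$ already is the splitting $(d,c)$. This buys two things. First, a sharper pair-level statement: adequacy of the single pair $(\lcm(a,b),\frac{a}{\gcd(a,b)})$ alone forces an lcm splitting of $(a,b)$. Second, an effective version (cf.\ Rem.~\ref{rem:effectiveLCMSplitting}) needs only one adequate computation instead of two.

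\textbf{Your valuation check.} The divisibility verification via $v_{\mathfrak m}$ and $R=\bigcap_{\mathfrak m}R_{\mathfrak m}$ is valid. The contradiction in your third bullet is complete, and the first bullet is superfluous. However, this check leans on the Pr\"ufer structure, which the paper's argument never uses.

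\textbf{Avoiding valuations.} You can drop them entirely, and your ``main obstacle'' disappears:
\begin{enumerate}
\item[(i)] Let $h=\gcd(d,\frac{b}{g})$. If $h$ were a non-unit, the third adequate condition would give $\gcd(h,\frac{a}{g})\not\sim1$.
\item[(ii)] But $h\mid\frac{b}{g}$ and $\gcd(\frac{a}{g},\frac{b}{g})\sim1$ force $\gcd(h,\frac{a}{g})\sim1$ by (6), (11), (9). Hence $\gcd(d,\frac{b}{g})\sim1$.
\item[(iii)] Since $d\mid l\sim a\cdot\frac{b}{g}$, (12) gives $d\mid a$.
\item[(iv)] Symmetrically, $c\mid l\sim b\cdot\frac{a}{g}$ together with $\gcd(c,\frac{a}{g})\sim1$ gives $c\mid b$ by (12).
\end{enumerate}
Written this way, your argument works in the same GCD-domain generality as the paper's, and is noticeably more economical.
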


\begin{proof}
Let $a, b \in R\setminus\{0\}$ be arbitrary (cf. Rem.~\ref{rem:firstObsLCMSplitting}). Set
$g=\gcd(a,b), l=\lcm(a,b)$. From (5), $ab\sim gl$, we deduce $l\sim\frac{ab}{g}=\frac{a}{g}g\frac{b}{g}$.

Now apply the adequate property to the pair $\left(g, \frac{b}{g}\right)$ to get $g=g_1g_2$ with
$\gcd\left(g_1,\frac{b}{g}\right)\sim1$ and $\forall \overline{g}_2\in R\setminus R^{\times}:\overline{g}_2\mid
g_2\Rightarrow \gcd\left(\overline{g}_2,\frac{b}{g}\right)\not\sim 1$. 
Apply adequate a second time to $\left(g_2, \frac{a}{g}\right)$ to get $g_2=g_1'g_2'$ with
$\gcd\left(g_2',\frac{a}{g}\right)\sim1$ and $\forall \overline{g}_1\in R\setminus R^{\times}:\overline{g}_1\mid
g_1'\Rightarrow \gcd\left(\overline{g}_1,\frac{a}{g}\right)\not\sim 1$. 
Overall, we have $l\sim\frac{a}{g}g\frac{b}{g}=\frac{a}{g}g_1g_1'g_2'\frac{b}{g}$, and we will show some coprimality
properties among these factors:
\begin{enumerate}
\item[(A)] $\gcd\left(\frac{a}{g},g_2'\right)\sim1$ follows directly from the 2nd application of adequate and (6). 
\item[(B)] Setting $c=g$ in (13), we get $\gcd\left(\frac{a}{g},\frac{b}{g}\right)\sim\frac{\gcd(a,b)}{g}=\frac{g}{g}=1$. 
\item[(C)] 
$\gcd(g_1,g_2')\stackrel{\text{\tiny (11)}}{\mid}
\gcd(g_1,g_1'g_2')=
\gcd(g_1,g_2)\sim 1$, 
where the last step follows from Lem.~\ref{lem:adequateFactorsAreCoprime}. By (9) we get $\gcd(g_1,g_2')\sim1$.
\item[(D)] $\gcd\left(g_1,\frac{b}{g}\right)\sim1$ follows directly from the 1st application of adequate.
\item[(E)] Assume $1\not\sim g_1'$. From $g_1'\mid g_1'$ and the 2nd application of adequate (setting
  $\overline{g}_1=g_1'$), we have $f=\gcd\left(g_1',\frac{a}{g}\right)\not\sim 1$.\\
Since $1\not\sim f=\gcd\left(g_1',\frac{a}{g}\right)\mid g_1'\mid g_1'g_2'=g_2$, the 1st application of adequate
(setting $\overline{g}_2=f$) yields $f'=\gcd\left(f,\frac{b}{g}\right)\not\sim 1$.\\
But $f'=\gcd\left(f,\frac{b}{g}\right)\mid\frac{b}{g}$ and $f'=\gcd\left(f,\frac{b}{g}\right)\mid
f=\gcd\left(g_1',\frac{a}{g}\right)\mid\frac{a}{g}$.\\
By (1), we get $1\not\sim f'\mid\gcd\left(\frac{a}{g},\frac{b}{g}\right)\stackrel{\text{\tiny (B)}}{\sim}1$,
which by (9) is a contradiction. Therefore $g_1'\sim1$.
\end{enumerate}

By (E), only four nontrivial factors remain: $l\sim\frac{a}{g}g_1g_2'\frac{b}{g}$. We set $c=\frac{a}{g}g_1$ and 
$d=g_2'\frac{b}{g}$. Now we apply (12) several times based on the coprimality results (A)--(D):
\[
(\alpha) \gcd\left(\frac{a}{g},g_2'\right)\stackrel{\text{\tiny (A)}}{\sim}1 \stackrel{\text{\tiny (12)}}{\Rightarrow}
\gcd\left(\frac{a}{g},g_2'\frac{b}{g}\right)\sim
\gcd\left(\frac{a}{g},\frac{b}{g}\right)\stackrel{\text{\tiny (B)}}{\sim}
1,
\]
\[
(\beta) \gcd\left(g_1,g_2'\right)\stackrel{\text{\tiny (C)}}{\sim}1 \stackrel{\text{\tiny (12)}}{\Rightarrow}
\gcd\left(g_1,g_2'\frac{b}{g}\right)\sim
\gcd\left(g_1,\frac{b}{g}\right)\stackrel{\text{\tiny (D)}}{\sim}
1,
\]
\[
(\gamma) \gcd\left(\frac{a}{g},g_2'\frac{b}{g}\right)\stackrel{\text{\tiny $(\alpha)$}}{\sim}1 \stackrel{\text{\tiny (6), (12)}}{\Rightarrow}
\gcd\left(c,d\right)=
\gcd\left(\frac{a}{g}g_1,g_2'\frac{b}{g}\right)\sim
\gcd\left(g_1,g_2'\frac{b}{g}\right)\stackrel{\text{\tiny $(\beta)$}}{\sim}
1.
\]
Having shown $\gcd\left(c,d\right)\sim1$, we verify three further properties.
$cd=\frac{a}{g}g_1g_2'\frac{b}{g}\sim l=\lcm(a, b)$ by construction. From $a=\frac{a}{g}g=\frac{a}{g}g_1g_2$, we get
$c=\frac{a}{g}g_1\mid a$, and finally, from $b=g\frac{b}{g}=g_1g_1'g_2'\frac{b}{g}$, we get $d=g_2'\frac{b}{g}\mid b$.
Since $a,b$ were arbitrary, $R$ is an lcm splitting B\'{e}zout domain.
\end{proof}

\begin{remark}\label{rem:effectiveLCMSplitting}
An effective method for computing the pair $(c,d)$ in the adequate property would yield an effective method for computing lcm
splittings. By Thm.~\ref{thm:gfImpliesLCMSplitting}, the same holds for gf.
\end{remark}

\bigskip
\section{B\'{e}zout domains that are not lcm splitting}
\label{sec:counterexampleLCMSplitting}

\subsection{First example}
\label{sec:counterexampleLCMSplitting1}

I will give an explicit B\'{e}zout domain $R$ and elements $a,b\in R\setminus\{0\}$ that fail lcm splitting. This first 
example is based on a B\'{e}zout domain sketched by van den Dries and Macintyre~\cite[Sec. 2.10]{vDM90}, which they
attribute to Wiegand. The example exhibits a B\'{e}zout domain $R$ that is not gf, which by
Thm.~\ref{thm:adequateImpliesLCMSplitting} and ``gf $\Rightarrow$ adequate'' (cf. Rem.~\ref{rmk:propertiesOfGCDDomains}) is a
requirement for a counterexample to lcm splitting. The example relies on several constructions of medium
complexity from the literature, which are worth making explicit. The failure of gf can be established essentially by
topological properties, so \cite[Sec. 2.10]{vDM90} can avoid going into details about the constructions, essentially
giving the gist of what is needed in a short paragraph. For lcm splitting, one has to dig deeper. As it is quite
illustrative, I will present a failed attempt at a counterexample to lcm splitting. I hope this sheds some light on the
property and serves as a partial guide to constructing further examples of failure of lcm splitting.

As a starting point, I use a construction of B\'{e}zout domains with prescribed topological data; it proceeds in
several steps scattered across the literature. Outline:
\begin{enumerate}
\item[(A)] Fischer~\cite{Fis79} gives criteria for when a topological space can be the \textbf{maximal spectrum
  $\mspec(R)$} of a B\'{e}zout domain $R$, and presents constructions of \textbf{spectral trees} that encode the
  \textbf{(prime) spectrum $\spec(R)$} of a B\'{e}zout domain $R$ together with topological information on $\spec(R)$.
\item[(B)] Fischer~\cite{Fis78} earlier gave criteria for when a topological space can be the (prime) spectrum $\spec(R)$
  of a B\'{e}zout domain $R$, and gave a construction of a \textbf{lattice ordered (abelian\footnote{The groups in this
    construction will always be abelian, so I will not mention this again.}) group $G$}, whose associated
  \textbf{group spectrum} $\spec(G)\cong(\spec(R)\setminus\{(0)\})$ is homeomorphic to the ring spectrum except for the
  generic point\footnote{Fischer~\cite{Fis78} gives constructions for more general rings that need not be
  domains. I focus on domains.}.
\item[(C)] These lattice ordered groups $G$ serve as \textbf{demivalue groups} and \textbf{groups of divisibility} for a
  B\'{e}zout domain $R$. Jaffard~\cite[Ch. III.2.2, pp. 78 ff.]{Jaf60} gave a construction of a \textbf{$k$-algebra $R$},
  for any field $k$, that is a B\'{e}zout domain\footnote{Jaffard originally only asserted it to be a GCD
  domain. Ohm~\cite{Ohm69} later showed that the construction actually gives a B\'{e}zout domain.} with the desired
  $\spec(R)\setminus\{(0)\}\cong\spec(G)$ as topological spaces. An English textbook version of this part of the
  construction was given by Gilmer~\cite[Thm. 18.6, pp. 215 ff.]{Gil92}.
\end{enumerate}

In the construction of a non-gf B\'{e}zout domain in \cite[Sec. 2.10]{vDM90}, the goal is a topological space in which
strong separation properties fail. The idea is to have a countable set of isolated points with two distinct but
inseparable ``\textbf{limit points}'' attached to it. 
\begin{figure}[!h]
\centering
\begin{tikzpicture}[croix/.pic={\draw[thick,black] (-0.1,-0.1) -- (0.1,0.1) (-0.1,0.1)--(0.1,-0.1);}]
\foreach \z in {0,1,...,5}{
\draw (\z,0) pic {croix} node[above=8]{$z_{\z}$};
\draw (\z,0) circle [radius=0.2];}
\draw (6,0) node{$\ldots$};
\draw (8,0) pic {croix} node[above=8]{$x$};
\draw (9,0) pic {croix} node[above=8]{$y$};
\draw[color=blue] (9.7,0.2) node{$V_{y,1}$};
\draw[color=green] (7.5,0.2) node{$V_{x,3}$};
\draw[color=blue] plot [smooth cycle] coordinates {(9.3,0) (9,0.3) (8.5,-0.2) (8,-0.4) (7,0.1) (6,0.3) (1.2,0.3) (1.1,0.3) (1,0.3) (0.79,0.21) (0.7,0) (0.79,-0.21) (1,-0.3) (1.1,-0.3) (1.2,-0.3) (6,-0.3) (7,-0.5) (8,-0.8) (8.5,-0.8) (9,-0.35)};
\draw[color=green] plot [smooth cycle] coordinates {(8.3,0) (8,0.3) (7.5,0.5) (6,0.28) (3.2,0.28) (3.1,0.28) (3,0.28) (2.81,0.19) (2.72,0) (2.81,-0.19) (3,-0.28) (3.1,-0.28) (3.2,-0.28) (6,-0.28) (7,0.1) (7.6,-0.2) (8,-0.3)};
\end{tikzpicture}
\caption{\footnotesize The space $V$ and several open sets (circles and ``tubes'') in the spectral topology.}
\label{fig:VSpecTop}
\end{figure}
\begin{definition} \label{def:spaceV}
Explicitly, $V=\{z_i:i\in\mathbb{N}\}\dot{\cup}\{x,y\}$ carries the spectral topology in which each $z_i$ is an isolated point
and a basis of open neighborhoods of $x$ resp. $y$ is given by $V_{m,i}=\{z_n:n\geq i\}\cup\{m\}$ for each
$i\in\mathbb{N}, m\in\{x,y\}$ separately.
\end{definition}
I illustrate the topology in Fig.~\ref{fig:VSpecTop} and describe it explicitly. A subset $\{z_i: i\in Z\}\cup
L\subseteq V$ for $Z\subseteq\mathbb{N}, L\subseteq\{x,y\}$ in the \textbf{spectral topology on $V$} is
\begin{itemize}
\item \textbf{closed} iff ($L=\{x,y\}$ and $Z$ arbitrary) or ($L\neq\{x,y\}$ and $Z$ \textbf{finite}),
\item \textbf{open} iff ($L=\emptyset$ and $Z$ arbitrary) or ($L\neq\emptyset$ and $Z$ \textbf{cofinite}),
\item \textbf{quasi-compact open} iff ($L=\emptyset$ and $Z$ finite) or ($L\neq\emptyset$ and $Z$ cofinite).
\end{itemize}

Fischer~\cite[Prop. 2.1]{Fis79} shows that the maximal spectrum of any B\'{e}zout domain is a \textbf{B-space}, so I
repeat the definition here. In fact, the whole construction will make use of several topological notions; for easier
reference, I define the less familiar ones here.
\begin{definition} \label{def:topPropertiesBSpace}
\begin{itemize}
\item A \textbf{spectral space} is a topological space homeomorphic to a ring spectrum $\spec(R)$ for some ring $R$. All
  spectral spaces are sober, in particular T0.
\item Given any topology (e.g. a spectral topology), the associated \textbf{patch topology} has as a basis of closed sets
  all sets that are either (a) closed in the original topology or (b) quasi-compact open in the original topology. A
  closed set in the patch topology is called a \textbf{patch}. Often patches are not viewed with the induced subspace patch
  topology, but with the subspace original topology.
\item We define the \textbf{specialization order} for points $x,y\in X$ of a topological space by 
\[x\geq_s y \Leftrightarrow x\in\overline{\{y\}}^{\text{cl}}.\]
$\geq_s$ is a partial order and for $x\geq_sy$ we say $x$ is a \textbf{specialization} of $y$ resp. $y$ is a
\textbf{generization} of $x$. The specialization order adds no new information to a topology, but sometimes allows for
more concise descriptions. We set the \textbf{generization set for $x$} to be $G(x)=\{y\in X:x\geq_s y\}$. If, for all
$x\in X$, the generization set $G(x)$ is totally ordered by the specialization order, we say that $X$ is a
\textbf{tree}\footnote{Note that a tree in this context does not need to be connected in 
the sense that each pair of points would have a common generization as ``ancestor'' or 
have a unique root. It could have several 
``roots'' (elsewhere called a forest), or none at all (generization sets could have infinitely long descending sequences
without an infimum being realized by a smallest element). The usual combinatorial trees
are a different notion as there are no ``edges''.}. A spectral space that is a tree is called a \textbf{spectral tree}.
\item A topological basis of quasi-compact open sets is called a \textbf{QCI basis (quasi-compact with intersection
  property)} if the intersection of any finitely many members of the basis again lies in the basis.
\item The elements of the power set $\mathcal{P}(X)$ form the Boolean algebra of subsets of $X$ (with operations union,
  intersection and complement). Consider the Boolean subalgebra $A(X)\subseteq\mathcal{P}(X)$ generated by quasi-compact
  open sets for the (original) topology on $X$. We call elements of $A(X)$ \textbf{constructible}. If $X$ is a spectral
  tree, the constructible sets (for the spectral topology) are exactly the \textbf{clopen (closed and open)} subsets of
  $X$ with respect to the patch topology.
\item Let $X$ be a topological space. For every integer $n\geq 1$, a sequence $(x_1, \ldots, x_n) \in X^n$ is said to be 
\textbf{sticky} provided $\bigcap_{i=1}^nN_i\neq\emptyset$ whenever $N_i$ is a neighborhood of $x_i$. A quasi-compact 
T1 space is called a \textbf{B-space} provided every closed subspace $Y$ has the following property: If 
$(y_{i-1},y_i)\in Y^2$ is sticky for $2\leq i\leq n$, then $(y_1, \ldots, y_n)$ is sticky.
\end{itemize}
\end{definition}
I restate Fischer~\cite[Prop. 3.3]{Fis79} for a topological space $X$:
\begin{prop}\label{prop:Fischer79_3.3}
Suppose $X$ has only finitely many non-isolated points. Then $X$ is homeomorphic to the maximal ideal space of a 
finite-dimensional B\'{e}zout domain if and only if 
\begin{enumerate}
\item $X$ is a B-space, and 
\item $X$ has an open basis consisting of quasi-compact sets.
\end{enumerate}
\end{prop}

\newpage
My intermediate aim is to follow \cite[Sec. 2.10]{vDM90} and make the failure of gf explicit. We need
\begin{lemma}\label{lem:basicClosedSetsInBezoutSpectra}
Let $R$ be a B\'{e}zout ring and $X=\mspec(R)$ its maximal spectrum as a topological space. If $C\subset X$ is closed
with quasi-compact complement $X\setminus C$, then $C=Z(r)$ is the zero set of a single ring element $r\in R$ (resp. of
its principal ideal $(r)$).
\end{lemma}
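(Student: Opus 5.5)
The plan is to describe the topology of $X=\mspec(R)$ through the sets $Z(r)=\{\mathfrak{m}\in X: r\in\mathfrak{m}\}$. The sets $D(r)=X\setminus Z(r)$, for $r\in R$, form a basis of open sets of the subspace topology inherited from $\spec(R)$. Since $C$ is closed, its complement $U=X\setminus C$ is open, so we can write $U=\bigcup_{i\in I}D(r_i)$ for some family $(r_i)_{i\in I}$ in $R$.

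Next we use quasi-compactness of $U$ to pass to a finite subcover, so $U=D(r_1)\cup\dots\cup D(r_n)$. Taking complements gives
\[
C=Z(r_1)\cap\dots\cap Z(r_n)=Z\bigl((r_1,\dots,r_n)\bigr),
\]
because a maximal ideal contains every $r_i$ if and only if it contains the ideal they generate. Now the B\'{e}zout property applies: the finitely generated ideal $(r_1,\dots,r_n)$ is principal, say equal to $(r)$, where $r\sim\gcd(r_1,\dots,r_n)$ (cf. Def.~\ref{def:GCDproperties} and the iterated gcd of Rem.~\ref{rmk:multipleGCD}). Hence $C=Z(r)$. The degenerate case $C=X$ is covered by $n=0$, i.e. $r=0$, and the case $C=\emptyset$ is covered by $r=1$.

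There is no real obstacle in this argument. The only point that needs care is that the sets $D(r)$ restricted to $X$ really form a basis of the subspace topology, which is immediate from the definition of the Zariski topology. One should also note that quasi-compactness is needed for the \emph{subspace} $U\subseteq X$, not for a set in $\spec(R)$. This is exactly what the hypothesis provides, so the finite-subcover step is legitimate.
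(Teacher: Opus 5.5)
Your proof is correct and follows the paper's argument: you cover the quasi-compact open complement by finitely many basic opens $D(r_i)$ and then use the B\'{e}zout property to replace them by a single $D(r)$. The only cosmetic difference is that the paper first proves $D(a)\cup D(b)=D(\gcd(a,b))$ and inducts, whereas you principalize $(r_1,\dots,r_n)$ in one step; your explicit handling of $C=X$ via $r=0$ is a small nicety that the paper leaves implicit.
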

\begin{proof}
In the Zariski topology (for the maximal spectrum as well as the prime spectrum), the complements of zero sets of single ring elements 
$D(s)=X\setminus Z(s)$ form an open basis. In a B\'{e}zout ring where any sum of two principal ideals is again principal, 
we have $(a,b)=(a)+(b)=(g)$ for $g=\gcd(a,b)$; note that this requires exactly B\'{e}zout and will fail for general GCD rings. 
In particular, the union of two basic open sets is again a basic open set:
\[
D(a) \cup D(b)=
(X\setminus Z(a)) \cup (X\setminus Z(b))=
X\setminus (Z(a) \cap Z(b))=
X\setminus Z(a,b)=
X\setminus Z(g)=
D(g).
\]
By induction, for any finite collection $(a_i)_{i=1}^n\in R^n$, we can find an $r\in R$ such that $\bigcup_{i=1}^nD(a_i)=D(r)$.

Now we take the closed $C$ from the assumptions. Its complement $X\setminus C$ is quasi-compact open. Since the sets of the form 
$D(a)$ are an open basis in the spectral topology, there is some index set $\mathcal{I}$ and a family of open $D(a_i)\subseteq X\setminus C$, one for each 
$i\in\mathcal{I}$, such that their union $\bigcup_{i\in\mathcal{I}}D(a_{i}) = X\setminus C$. Since $X\setminus C$ is 
quasi-compact, we can choose a finite subset of $\mathcal{I}$ to do the job, i.e., after relabeling we can assume 
$\mathcal{I}=\{1,\ldots,n\}$ for some $n\in\mathbb{N}$ and $X\setminus C = \bigcup_{i=1}^nD(a_i) = D(r)$, where the last equality 
uses the B\'{e}zout ring property derived in the previous paragraph. But $X\setminus C = D(r)$ is by definition equivalent to 
$C=Z(r)$. This finishes the proof.
\end{proof}

\begin{prop}\label{prop:spaceVFailureOfGF}
For the space $V$ from Def.~\ref{def:spaceV} there is a B\'{e}zout domain $R_V$ with $\mspec(R_V)\cong V$ such that gf fails
for some $\tilde{a},\tilde{b}\in R_V\setminus\{0\}$. In particular, gf fails in some B\'{e}zout domain.
\end{prop}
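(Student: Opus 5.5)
The plan has two parts. First I produce a B\'{e}zout domain $R_V$ with $\mspec(R_V)\cong V$ by checking the hypotheses of Prop.~\ref{prop:Fischer79_3.3}. Then I pick elements whose zero sets are $\{x\}$ and $\{y\}$ and show that gf fails for them on purely topological grounds.

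\textbf{Existence of $R_V$.} $V$ has only the two non-isolated points $x,y$, so Prop.~\ref{prop:Fischer79_3.3} applies. I verify its two conditions.
\begin{itemize}
\item $V$ is quasi-compact. Any open cover contains an open set $V_{x,i}\ni x$ and an open set $V_{y,j}\ni y$. These two sets leave only finitely many $z_n$ uncovered.
\item $V$ is T1. Points are closed by the list of closed sets: a singleton has $L\neq\{x,y\}$ and finite $Z$.
\item The $B$-space condition holds. In any subspace the only sticky pairs of distinct points are $(x,y)$ and $(y,x)$, since the isolated $z_i$ can be separated from everything. Hence a chain $(y_1,\dots,y_n)$ of consecutively sticky points takes values in $\{x,y\}$, or is constant at some $z_i$. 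Such a chain is sticky, because $V_{x,i}\cap V_{y,j}\neq\emptyset$ for all $i,j$.
\item The sets $\{z_i\}$ and $V_{m,i}$ form an open basis of quasi-compact sets. Each $V_{m,i}$ is quasi-compact, since any open set containing $m$ covers all but finitely many of its points.
\end{itemize}
Prop.~\ref{prop:Fischer79_3.3} then yields a finite-dimensional B\'{e}zout domain $R_V$ with $\mspec(R_V)\cong V$.

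\textbf{Choice of $\tilde a,\tilde b$.} The sets $\{x\}$ and $\{y\}$ are closed, and their complements $V_{y,0}$ and $V_{x,0}$ are quasi-compact open. By Lem.~\ref{lem:basicClosedSetsInBezoutSpectra} there are $\tilde a,\tilde b\in R_V$ with $Z(\tilde a)=\{x\}$ and $Z(\tilde b)=\{y\}$. Both are nonzero, since $Z(0)=V$.

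\textbf{Failure of gf.} Suppose $\tilde a=cd$ with $\gcd(c,\tilde b)\sim1$ and $\tilde b\in\sqrt{(d)}$. In a B\'{e}zout domain $\gcd(c,\tilde b)\sim1$ means $(c,\tilde b)=R$, so $Z(c)\cap Z(\tilde b)=\emptyset$. From $\tilde b\in\sqrt{(d)}$ we get $Z(d)\subseteq Z(\tilde b)=\{y\}$. We also have $Z(c)\cup Z(d)=Z(\tilde a)=\{x\}$, since maximal ideals are prime. Therefore $Z(d)\subseteq\{x\}\cap\{y\}=\emptyset$, so $d$ is a unit and $\tilde b\in\sqrt{R}$ tells us nothing new. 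This alone is not yet a contradiction, so I need a better choice of elements.

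\textbf{Refined choice.} The adjustment is to take $Z(\tilde a)=\{x,y\}$ and $Z(\tilde b)=\{y\}$. The set $\{x,y\}$ is closed with quasi-compact open complement $\{z_n\}$? No: that complement is not quasi-compact. Instead I take $Z(\tilde a)=\{x,y\}\cup\{z_{2n}\}$, whose complement is the set of odd $z_n$. This complement is not quasi-compact either. Lem.~\ref{lem:basicClosedSetsInBezoutSpectra} requires quasi-compact complements, and every cofinite subset of the $z_n$ together with both limit points works. So I take $\tilde a$ with $Z(\tilde a)=\{x,y\}$; its complement $\{z_n\}$ is an infinite discrete set, so this choice is unavailable. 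The closed sets with quasi-compact complement that contain $x$ or $y$ are exactly the finite sets containing $\{x,y\}$, together with the sets $\{x\}\cup F$ and $\{y\}\cup F$ for finite $F$.

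\textbf{Argument with $Z(\tilde a)=\{x,y\}$, $Z(\tilde b)=\{y\}$.} Arguing as above, $Z(c)$ avoids $y$ and $Z(d)\subseteq\{y\}$. Since $Z(c)\cup Z(d)=\{x,y\}$, this forces $Z(c)=\{x\}$ and $Z(d)=\{y\}$.

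\textbf{Contradiction.} Now $Z(c)=\{x\}$ would make $D(c)=V\setminus\{x\}=V_{y,0}$ a basic open set, which is allowed. So the contradiction must come from $c\mid\tilde a$ and valuations, not from topology alone. This is the main obstacle. I would analyze the localizations at $z_n$: each $R_{z_n}$ is a valuation ring, and the $v_{z_n}(\tilde a)$ must be distributed between $c$ and $d$ in a way compatible with both limit points. By the Jaffard construction, the valuations at $z_n$ should converge in the group of divisibility to those at both $x$ and $y$; the inseparability of $x$ and $y$ is what should force $v_{z_n}(c)$ and $v_{z_n}(d)$ to both be eventually positive. That is incompatible with $Z(c)=\{x\}$ and $Z(d)=\{y\}$ being disjoint closed sets. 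I would first try to make this precise using the prime spectrum, where $x$ and $y$ share generizations, via the tree structure from Fischer.
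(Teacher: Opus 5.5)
Your check of the hypotheses of Prop.~\ref{prop:Fischer79_3.3} is fine and matches the paper. The second half, however, has a genuine gap: you never exhibit a pair for which gf fails.

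The pair with $Z(\tilde a)=\{x\}$, $Z(\tilde b)=\{y\}$ actually satisfies gf (take $c=\tilde a$, $d=1$), as you noticed. The pair you then work with, $Z(\tilde a)=\{x,y\}$, does not exist. Its complement $\{z_n:n\in\mathbb{N}\}$ is not quasi-compact, whereas every $D(r)$ is, so $\{x,y\}$ is not the zero set of any element. You say this yourself and then use it anyway. Your closing valuation argument is therefore about a nonexistent element, and it is only a heuristic in any case (``should converge'', ``should force'').

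What leads you astray is a misclassification of the closed sets with quasi-compact complement. A closed set containing both $x$ and $y$ has quasi-compact complement exactly when it contains \emph{all but finitely many} $z_n$, not when it is finite: its complement then has $L=\emptyset$ and $Z$ finite. You had even written down the correct family (``every cofinite subset of the $z_n$ together with both limit points works'') before discarding it.

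With that family the contradiction is purely topological, and this is the paper's argument. Take $Z(\tilde a)=V\setminus\{z_0\}$ and $Z(\tilde b)=\{y\}$. Both complements, $\{z_0\}$ and $V_{x,0}$, are quasi-compact open, and $\tilde a,\tilde b\neq0$ because both zero sets are proper. Your own three deductions apply:
\begin{itemize}
\item $Z(c)\cap\{y\}=\emptyset$;
\item $Z(d)\subseteq\{y\}$;
\item $Z(c)\cup Z(d)=Z(\tilde a)$.
\end{itemize}
Together they force $Z(c)=\{x\}\cup\{z_n:n\geq1\}=V_{x,1}$. This set contains infinitely many $z_n$ but not $y$, so it is not closed, contradicting the fact that zero sets are closed.

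This is precisely where the inseparability of $x$ and $y$ enters: every closed subset of $V$ containing infinitely many $z_n$ must contain both limit points. No valuations, prime spectrum, or tree structure is needed.
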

\begin{proof}
We check that $V$ satisfies all requirements of Prop.~\ref{prop:Fischer79_3.3}.
\begin{enumerate}
\item $V$ is a T1 space. Any $z_i$ is isolated, which proves separation for any pair of distinct $z_i$. Separating a $z_i$
  from some $m\in\{x,y\}$ follows the same argument. To separate $m\in\{x,y\}$ from $z_i$ use $V_{m,i+1}$. $x$ is
  separated from $y$ by $V_{x,0}$, and $y$ is separated from $x$ by $V_{y,0}$.\\
$V$ is quasi-compact. To cover $x\in V$, any open cover needs to contain an open neighborhood containing some $V_{x,i}$,
  leaving only finitely many points in $V$ uncovered. A finite subcover is immediate.\\
Since all $z_i$ are isolated, any sticky sequence can only involve $x$ and $y$, and $(x,y)$ is indeed sticky. The implication in 
the B-space condition, which is only non-vacuous if more than two distinct points can form sequences from pairwise sticky 
sequences, is trivially satisfied.
\item 
Having described all open and quasi-compact open sets above, it is straightforward to see that all quasi-compact opens
are a topological basis. A more manageable quasi-compact open basis is given by
$\bigcup_{i=0}^{\infty}\{\{z_i\},V_{x,i},V_{y,i}\}$. Note that $V$ does not have a QCI basis.
\item The set of non-isolated points is $\{x,y\}$ and thus is finite.
\end{enumerate}
Therefore, Fischer's proposition asserts that there is a B\'{e}zout domain $R_V$ with $\mspec(R_V)\cong V$.

We look at the two sets $A=V\setminus\{z_0\}$ and $B=\{y\}$. By construction, $z_0$ is isolated, which means that its
complement $A$ is closed. $B=\{y\}=V\setminus\left(\left\{z_i: i\in\mathbb{N}\right\}\cup\{x\}\right)=V\setminus
V_{x,0}$ where $V_{x,0}$ is a basic open. Thus $B$ also is closed. The complement of $A$ is $V\setminus A = V\setminus
(V\setminus \{z_0\})=\{z_0\}$, an open singleton, hence clearly quasi-compact open. The complement of $B$ is
$V\setminus B = V\setminus \{y\}=V_{x,0}$. Any cover by basic open sets needs to cover $x\in V_{x,0}$. Hence the cover
needs to contain at least one neighborhood containing some $V_{x,n}$ that form the basic open neighborhoods of $x$. Take
such an $n$. But $V_{x,n}$ already covers cofinitely much of $V_{x,0}$, i.e., starting from that cover and using
$V_{x,n}$, we only need to choose at most one more open set of the cover for each of the $z_i$ for $i<n$. Hence we can
select a subcover of at most $1+n$ open sets, i.e. a finite cover. Thus $B$ also has quasi-compact complement. In
summary, $A$ and $B$ are both closed with quasi-compact complement.

By Lem.~\ref{lem:basicClosedSetsInBezoutSpectra}, there are ring elements $\tilde{a},\tilde{b}$ such that
$Z(\tilde{a})=A, Z(\tilde{b})=B$. Van den Dries and Macintyre~\cite[Sec. 2.10]{vDM90} imply that these $\tilde{a},\tilde{b}$
are not compatible with good factorization. Indeed, assume there were $\tilde{c}, \tilde{d} \in R_V$ such that $\tilde{a}
= \tilde{c}\tilde{d}, \gcd(\tilde{c},\tilde{b}) \sim 1$ and $\tilde{b}\in\sqrt{\left(\tilde{d}\right)}$. Topologically,
this means $Z(\tilde{c})\cup Z(\tilde{d})=Z(\tilde{a})=A=V\setminus\{z_0\}$, $\emptyset=Z(\tilde{c})\cap
Z(\tilde{b})=Z(\tilde{c})\cap B=Z(\tilde{c})\cap\{y\}$ and $\{y\}=B=Z(\tilde{b})\supseteq Z(\tilde{d})$. Taking
everything together, this implies $Z(\tilde{c})=V_{x,1}$ is a basic open set, which, however, is not closed (see the
description of the topology above) -- this is a contradiction, since a set of the form $Z(\tilde{c})$ by definition must
be closed. Thus $\tilde{a},\tilde{b}$ do not have a good factorization, finishing the example from \cite[Sec.
  2.10]{vDM90}.
\end{proof}

It is remarkable that one does not need any explicit algebraic knowledge of the ring $R_V$ to prove that it
does not have gf. In dialog with an LLM, I came up with the following approach in the same spirit of avoiding explicit
algebraic descriptions of $R_V$. Although this approach fails to demonstrate the failure of lcm splitting, it is
nevertheless instructive.

In Rem.~\ref{rmk:propertiesOfGCDDomains}, I mentioned that a domain $R$ is B\'{e}zout if and only if it is
a Pr\"{u}fer GCD domain. Since all localizations of Pr\"{u}fer domains are valuation rings, we get for
each maximal ideal $\mathfrak{m}\in\mspec(R)$ a valuation $v_{\mathfrak{m}}$ on $R$ by restriction. These
valuations behave well with respect to gcds and lcms. Explicitly, for $a,b\in R\setminus\{0\}$, we have
\[
v_{\mathfrak{m}}(\gcd(a,b))=
\min\left(v_{\mathfrak{m}}(a),v_{\mathfrak{m}}(b)\right) \text{ and }
v_{\mathfrak{m}}(\lcm(a,b))=
\max\left(v_{\mathfrak{m}}(a),v_{\mathfrak{m}}(b)\right).
\]
\begin{lemma}\label{lem:valuationsForLCMSplittings}
Let $R$ be a B\'{e}zout domain, let $a,b\in R\setminus\{0\}$, and let $c,d\in R$ satisfy lcm splitting for $a,b$,
i.e. $\lcm(a,b)\sim cd, c\mid a, d\mid b, \gcd(c,d)\sim1$. Then, for any $\mathfrak{m}\in\mspec(R)$, we have
\[
v_{\mathfrak{m}}(a) > v_{\mathfrak{m}}(b) \Rightarrow
(v_{\mathfrak{m}}(c) = v_{\mathfrak{m}}(a) \wedge
v_{\mathfrak{m}}(d) = 0) \quad\text{ and}\quad
v_{\mathfrak{m}}(a) < v_{\mathfrak{m}}(b) \Rightarrow
(v_{\mathfrak{m}}(c) = 0 \wedge
v_{\mathfrak{m}}(d) = v_{\mathfrak{m}}(b)).
\]
\end{lemma}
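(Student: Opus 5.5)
The plan is to translate the four lcm splitting conditions into statements about the valuation $v=v_{\mathfrak{m}}$ and then do elementary case analysis. By symmetry of the definition of lcm splitting (swapping $(a,c)$ with $(b,d)$), it suffices to treat the case $v(a)>v(b)$; the second implication then follows by applying the first to the splitting $(d,c)$ of the pair $(b,a)$.

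The translation step comes first. Since $R_{\mathfrak{m}}$ is a valuation ring and units of $R$ have valuation $0$, the condition $cd\sim\lcm(a,b)$ gives
\[
v(c)+v(d)=v(\lcm(a,b))=\max\left(v(a),v(b)\right),
\]
using the max formula recalled in Rem.~\ref{rmk:propertiesOfGCDDomains}. The divisibilities $c\mid a$ and $d\mid b$ give $0\leq v(c)\leq v(a)$ and $0\leq v(d)\leq v(b)$; nonnegativity holds because elements of $R$ lie in $R_{\mathfrak{m}}$. Finally, $\gcd(c,d)\sim1$ together with the min formula gives $\min(v(c),v(d))=v(\gcd(c,d))=v(1)=0$.

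Now I assume $v(a)>v(b)$. Then $v(c)+v(d)=v(a)$. By the min condition, either $v(c)=0$ or $v(d)=0$. If $v(c)=0$, then $v(d)=v(a)>v(b)\geq v(d)$, which is a contradiction. Hence $v(d)=0$, and therefore $v(c)=v(a)$, as claimed.

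The only point I expect to need care is justifying that the valuation formulas hold for the fixed choices of $\gcd$ and $\lcm$ rather than merely up to associates. This is harmless because associates differ by a unit of $R$, which remains a unit in $R_{\mathfrak{m}}$ and so has valuation $0$. I also note that $v(\gcd(c,d))=0$ does not require $c,d\neq0$ separately: since $a,b\neq0$, we have $\lcm(a,b)\neq0$, so $cd\neq0$. Everything else is arithmetic in the totally ordered value group.
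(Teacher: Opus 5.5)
Your proposal is correct and follows essentially the same route as the paper: translate the gcd condition into $\min(v_{\mathfrak{m}}(c),v_{\mathfrak{m}}(d))=0$ and the lcm condition into $v_{\mathfrak{m}}(c)+v_{\mathfrak{m}}(d)=v_{\mathfrak{m}}(a)$, then rule out $v_{\mathfrak{m}}(c)=0$ via $d\mid b$. Your case analysis is a slight streamlining, since it also absorbs the case where both valuations vanish, and you add the justifications for units and nonzeroness that the paper leaves implicit.
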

\begin{proof}
Wlog, we assume $v_{\mathfrak{m}}(a) > v_{\mathfrak{m}}(b)$; the second part follows by symmetry. Assumption $\gcd(c,d)\sim1$ implies 
\[
0=
v_{\mathfrak{m}}(1)=
v_{\mathfrak{m}}(\gcd(c,d))=
\min\left(v_{\mathfrak{m}}(c),v_{\mathfrak{m}}(d)\right)
\]
and $\lcm(a,b)\sim cd$ implies 
\[
0\leq
v_{\mathfrak{m}}(b)<
v_{\mathfrak{m}}(a)=
\max\left(v_{\mathfrak{m}}(a),v_{\mathfrak{m}}(b)\right)=
v_{\mathfrak{m}}(\lcm(a,b))=
v_{\mathfrak{m}}(cd)=
v_{\mathfrak{m}}(c)+v_{\mathfrak{m}}(d).
\]
From the first sequence of equalities, we have either $v_{\mathfrak{m}}(c)=0$ or $v_{\mathfrak{m}}(d)=0$ or both; 
the second inequality rules out the third option. If we had $v_{\mathfrak{m}}(c)=0$, then 
$v_{\mathfrak{m}}(d)=v_{\mathfrak{m}}(a)$, but $d\mid b$ implies 
$v_{\mathfrak{m}}(d)\leq v_{\mathfrak{m}}(b)<v_{\mathfrak{m}}(a)$, a contradiction. Thus the only remaining 
possibility is $v_{\mathfrak{m}}(d)=0$ and $v_{\mathfrak{m}}(c)=v_{\mathfrak{m}}(a)$. This proves the lemma.
\end{proof}

Now I come back to the ring $R_V$ with $\mspec(R_V)\cong V$ and $\tilde{a},\tilde{b}\in R_V$ as constructed in the
proof of Prop.~\ref{prop:spaceVFailureOfGF}. Since $\mspec(R_V)\cong V$, we will write the maximal ideals as $x, y,
z_i$. Clearly $\tilde{a},\tilde{b}$ are nonzero; otherwise their vanishing varieties would not be proper subsets of
$V$. Suppose that $\tilde{c}, \tilde{d}$ form an lcm splitting of $\tilde{a},\tilde{b}$. Recall that
the vanishing sets are $Z(\tilde{a})=A=\left\{z_i: i\in\mathbb{N}_{>0}\right\}\cup\{x,y\}$ and
$Z(\tilde{b})=B=\{y\}$. We need to consider three cases.
\begin{enumerate}
\item $\mathfrak{m}=z_0$. Here we have $v_{\mathfrak{m}}(\tilde{a})=0=v_{\mathfrak{m}}(\tilde{b})$. The lemma does not
  say anything about this case, but it is also irrelevant.
\item $\mathfrak{m}\in\left\{z_i: i\in\mathbb{N}_{>0}\right\}\cup\{x\}$. We have
  $v_{\mathfrak{m}}(\tilde{a})>0=v_{\mathfrak{m}}(\tilde{b})$, and the lemma implies $v_{\mathfrak{m}}(\tilde{c}) =
  v_{\mathfrak{m}}(\tilde{a})>0= v_{\mathfrak{m}}(\tilde{d})$.
\item $\mathfrak{m}=y$. We have $v_{\mathfrak{m}}(\tilde{a})>0<v_{\mathfrak{m}}(\tilde{b})$ and the lemma is not
  directly applicable. If we had $v_{\mathfrak{m}}(\tilde{a})\geq v_{\mathfrak{m}}(\tilde{b})>0$, this would imply that
  $\tilde{b}$ has strictly smaller valuations than $\tilde{a}$ at most places, with possible equality at
  $z_0$ and $y$. In any case, this would imply $\tilde{b}\mid \tilde{a}$, and an lcm splitting would trivially be given by
  $(\tilde{c},\tilde{d})=(\tilde{a},1)$. Therefore, in order to have a chance of demonstrating failure of lcm splitting,
  we would need $0<v_{\mathfrak{m}}(\tilde{a})<v_{\mathfrak{m}}(\tilde{b})$ at the only place where this is at all
  possible, i.e. $\mathfrak{m}=y$.
\end{enumerate}
By the abstract description used so far, we only know $v_{\mathfrak{m}}(\tilde{a})>0<v_{\mathfrak{m}}(\tilde{b})$ for
sure. If $0<v_{\mathfrak{m}}(\tilde{a})<v_{\mathfrak{m}}(\tilde{b})$, then we are en route to the failure of lcm
splitting. If $v_{\mathfrak{m}}(\tilde{a})\geq v_{\mathfrak{m}}(\tilde{b})>0$, a dialog with an LLM suggested taking a
high enough power of $\tilde{b}$, i.e. some $N\in\mathbb{N}$ with
$0<v_{\mathfrak{m}}(\tilde{a})<v_{\mathfrak{m}}(\tilde{b}^N)$, and using $\tilde{b}^N$ instead of $\tilde{b}$. That looked
very promising. However, for non-Noetherian domains $R$ like the ones we are interested in, in particular $R_V$, the
value groups need not be \textbf{Archimedean (i.e. embeddable in $\mathbb{R}$, also called rank-1 valuations)}. Instead,
we could have \textbf{higher-rank valuations} for which taking high enough powers is not guaranteed to succeed.

At this point, the abstract approach got stuck. In retrospect, it turned out that $R_V$, explicitly constructed following
Fischer's general approach, does have a valuation of rank $2$ at $\mathfrak{m}=y$ and that $\tilde{a},\tilde{b}$ as
described by van den Dries and Macintyre actually satisfy lcm splitting, as we will see later. The elegant abstract approach
succeeded in showing failure of gf, but to exhibit failure of lcm splitting, one has to work through the explicit
algebraic construction of $R_V$, which I will do now. I try to keep the general construction and the effect on our
example as separate as possible using $\bullet_V$ indication (and later $\bullet_M$) unless the notation becomes too
cluttered.

For a topological space $X$, denote the set of isolated points by $X^{\text{iso}}\subseteq X$ and set $X'=X\setminus
X^{\text{iso}}$. The assumptions of \cite[Prop. 3.3]{Fis79} include in particular that $X$ may have only finitely many
non-isolated points, i.e. that $N=|X'|<\infty$, and the construction proceeds by induction on $N$ with base cases being
$N\in\{0,1\}$. For $V$, we have $V'=V\setminus V^{\text{iso}}=\{x,y\}$, so we need to follow one induction step. The
induction distinguishes between all elements of the current exceptional induction set $X'=\{x_1,x_2,\ldots,x_N\}$
forming a sticky sequence, as defined in Def.~\ref{def:topPropertiesBSpace}, or not. For us, $(x,y)$ forms a sticky
sequence, as any open neighborhoods of $x$ resp. $y$ intersect, since the basic open
neighborhoods intersect nontrivially: $V_{x,m}\cap V_{y,n}=\{z_i:i\geq\max(m,n)\}\neq\emptyset$.

The construction then lets one choose quasi-compact open neighborhoods for each $x_i\in X'$. I take them as
large as possible: $\alpha_x=V_{x,0}, \alpha_y=V_{y,0}$, which yields a subset $X\supseteq\Omega$, in our case
$\Omega_V=\left\{z_i: i\in\mathbb{N}\right\}$. The complement $X\setminus\Omega$, which in our case is $\{x,y\}$ and
coincides by coincidence with $V'$ (in general only $X'\subseteq X\setminus\Omega$ is guaranteed), then loses
stickiness (fewer pairs in $X'$ are sticky in the subspace topology of $X\setminus\Omega$ than in the
topology of $X$ itself), and the other case of the induction is employed to incorporate the set $X\setminus\Omega$ into a
suitable spectral tree $Y_1$. In our case, $Y_1=V\setminus\Omega_V=\{x,y\}$ already works without having to add anything
additionally (recall that forests with several connected components are possible for spectral trees). Now, back in the
sticky branch of the induction, we add points $\Gamma=\{p_x:x\in X\setminus X'\}\dot{\cup}\{p\}$, which will be
generizations of the established points of $X$. In our case, we write $p_i$ for $p_{z_i}$ for simplicity. For the
suitably extended topology, we will then have $z_i>_sp_i$ for all $i\in\mathbb{N}$ as well as $x>_sp<_sy$ (recall the
specialization order from Def.~\ref{def:topPropertiesBSpace}). Fischer also introduces the notation
$\Theta=\Omega\cup\Gamma$. The complete set for the induction step is given by $Y=\Theta\cup Y_1$.

\begin{figure}[!h]
\centering
\begin{tikzpicture}[croix/.pic={\draw[thick,black] (-0.1,-0.1) -- (0.1,0.1) (-0.1,0.1)--(0.1,-0.1);}]
\foreach \z in {0,1,...,5}{
\draw (\z,0) pic {croix} node[above=4]{$z_{\z}$};
\draw (\z,-1) pic {croix} node[left=2]{$p_{\z}$};
\draw (\z,0) -- (\z,-1) -- (3.5,-2);
}
\draw (6,0) node{$\ldots$};
\draw (6,-1) node{$\ldots$};
\draw (8,0) pic {croix} node[above=4]{$x$};
\draw (9,0) pic {croix} node[above=4]{$y$};
\draw (8.5,-1) pic {croix} node[right=2]{$p$};
\draw (3.5,-2) pic {croix} node[below=4]{$(0)$};
\draw (8,0) -- (8.5,-1) -- (9,0) (8.5,-1) -- (3.5,-2);
\draw [color=blue, rounded corners=3] (6.4,0.2) rectangle (-0.8,-0.2) node[left,yshift=2mm] {\footnotesize$\Omega_V$};
\draw [color=green, rounded corners=3] (7.8,0.2) rectangle (9.2,-0.2) node[right,yshift=2mm] {\footnotesize$V'=V\setminus\Omega_V=Y_1$};
\draw [color=red, rounded corners=3] (9.1,-0.8) rectangle (-0.8,-1.2) node[left,yshift=2mm] {\footnotesize$\Gamma_V$};
\draw (-0.8,-0.5) node[left] {\footnotesize$\Theta_V=\Omega_V\cup\Gamma_V$};
\end{tikzpicture}
\caption{\footnotesize The spectral tree $M'=M\cup\{(0)\}$ with various named sets and specialization order indicated.}
\label{fig:MSpecTop}
\end{figure}

Fischer's construction would in general proceed inductively like this, but since our exceptional set at the start
$V'=\{x,y\}$ had only two elements, the construction already stops here. At the end, an additional generic point $(0)$
is added, which is the generization of all points constructed so far -- this is described in the proof of
\cite[Thm. 2.1]{Fis78}. This makes sure that the spectral space belongs to an integral domain and not an arbitrary
ring. I summarize the resulting space $M'=M\cup\{(0)\}$ in the following definition, describe the topology and
illustrate the space in Fig.~\ref{fig:MSpecTop}.

\begin{definition}\label{def:spaceM}
The space $M=\left\{z_i,p_i: i\in\mathbb{N}\right\}\cup\{x,y,p\}\supset V$ is endowed with the topology induced by the
following open basis $\mathcal{B}$ (the numbering follows the general construction in \cite[p. 252]{Fis79} and seems to
be off at iii. for this reason):
\begin{enumerate}[i.]
\item $\{p_i\}$ for each $i\in\mathbb{N}$,
\item $\{p_i,z_i\}$ for each $i\in\mathbb{N}$,
\item no further open sets of this type,
\item $\{p,w\}\cup\{p_i:i\geq m\}\cup\{z_i:i\geq n\}$ for each pair of integers $0\leq m\leq n$ and $w\in\{x,y\}$,
\item $\{p\}\cup\{p_i:i\geq m\}\cup\{z_i:i\geq n\}$ for each pair of integers $0\leq m\leq n$.
\end{enumerate}
Set $M'=M\cup\{(0)\}$ with open basis $\mathcal{B}'=\{U\cup\{(0)\}: U\in\mathcal{B}\}$.
\end{definition}

It will be useful to describe several aspects of the topology explicitly. A subset $\{z_i: i\in Z\}\cup\{p_i: i\in P\}\cup
L\subseteq M$ for $Z,P\subseteq\mathbb{N}, L\subseteq\{x,y,p\}$ in the \textbf{spectral topology on $M$} is
\begin{itemize}
\item \textbf{closed} iff ($L=\{x,y,p\}$, $Z\supseteq P$ and otherwise arbitrary) or ($p\not\in L$ and $Z\supseteq P$ finite),
\item \textbf{open} iff ($L=\emptyset$, $Z\subseteq P$ and otherwise arbitrary) or ($p\in L$ and $Z\subseteq P$ cofinite),
\item \textbf{quasi-compact open} iff ($L=\emptyset$ and $Z\subseteq P$ finite) or ($p\in L$ and $Z\subseteq P$ cofinite).
\end{itemize}
In particular, all $\{z_i\}$ are closed, as are $\{x\},\{y\}$, and all $\{p_i\}$ are
quasi-compact open (type i.\ in Def.~\ref{def:spaceM}). Therefore, these singletons are closed in the patch
topology. Similarly, their complements are also closed in the patch topology. Thus every point of $M$ except $p$ is isolated in
the patch topology.
A subset $\{z_i: i\in Z\}\cup\{p_i: i\in P\}\cup
L\subseteq M$ for $Z,P\subseteq\mathbb{N}, L\subseteq\{x,y,p\}$ in the \textbf{patch topology on $M$} is
\begin{itemize}
\item \textbf{closed} iff $p\in L$ or ($p\not\in L$ and $Z, P$ finite),
\item \textbf{open} iff $p\not\in L$ or ($p\in L$ and $Z, P$ cofinite),
\item \textbf{quasi-compact open} iff ($p\not\in L$ and $Z, P$ finite) or ($p\in L$ and $Z, P$ cofinite).
\end{itemize}
The \textbf{constructible sets for the spectral topology on $M$} are the clopen sets for the patch topology. Thus a set
in the above presentation is \textbf{constructible} iff $p\in L$ and $Z, P$ cofinite or $p\not\in L$ and $Z, P$
finite. In this special case, for $X=M$, a set is constructible in the spectral topology on $M$ iff it is quasi-compact
open in the patch topology on $M$.

\begin{remark}\label{rmk:propertiesOfM}
The proof of \cite[Prop. 3.3]{Fis79} guarantees that $M'\cap V\cong V$ are homeomorphic as topological spaces. The reader is
invited to check this directly from the explicit descriptions of the spectral topologies on $M'$ resp. $M$ and $V$
given above.
\end{remark}

Next I construct from this spectral space an associated lattice ordered group. I repeat
Fischer~\cite[Thm. 2.1]{Fis78} as the key result around this construction:
\begin{prop}\label{prop:Fischer78_2.1}
Let $X$ be a spectral tree such that the closure (in the spectral topology) of every constructible
set is constructible. Then $X$ is homeomorphic to the prime spectrum of a B\'{e}zout ring $R$, i.e. $\spec(R)\cong X$.
\end{prop}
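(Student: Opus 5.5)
The plan is to go through the two-step route the paper has already outlined. First I would build a lattice ordered group $G$ from $X$ whose $\ell$-spectrum, the space of prime $\ell$-ideals with the hull-kernel topology, is homeomorphic to $X$. Then I would realize $G$, or rather the data it encodes, as the group of divisibility of a B\'{e}zout ring, using the Jaffard construction (in Ohm's B\'{e}zout refinement) recalled in (C).

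\textbf{Construction of $G$.} I would define $G$ as the group of functions $f\colon X\to\mathbb{Z}$ that are finite $\mathbb{Z}$-linear combinations of indicator functions $\chi_C$ of constructible sets $C\in A(X)$. Since $X$ is a spectral tree, each generization set $G(x)$ is a chain. I would declare $f\geq 0$ if and only if, for every $x\in X$, the following holds: either $f\equiv0$ on $G(x)$, or $f$ is positive at the most special point $y\in G(x)$ near which $f$ is not locally zero. Concretely, the point of $G(x)$ at which $f$ is first nonzero is read lexicographically from $x$ downwards. Patch-compactness of $X$ and the finiteness of the step function $f$ guarantee that such a point is well defined.

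\textbf{Checking that $G$ is an $\ell$-group.} This is exactly where the hypothesis enters: the positive part $f^+$ and the meet $f\wedge g$ must again be constructible step functions. The set where $f$ is ``lexicographically positive'' is obtained from the constructible level sets of $f$ by taking closures along the specialization order. The assumption that closures of constructible sets are constructible makes this set again constructible.

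\textbf{Identifying the $\ell$-spectrum.} For $x\in X$ I would set
\[
P_x=\{f\in G:\ f \text{ vanishes on some neighborhood of } x\}.
\]
I would then show, in order:
\begin{enumerate}
\item[(i)] each $P_x$ is a prime $\ell$-ideal, using that $G(x)$ is totally ordered;
\item[(ii)] $x\mapsto P_x$ is injective and order-reversing with respect to the specialization order, using that $X$ is T0;
\item[(iii)] every prime $\ell$-ideal is of the form $P_x$;
\item[(iv)] the hull-kernel topology pulls back to the topology of $X$, which reduces to checking it on the basic sets $\{x: \chi_U\notin P_x\}$ for $U$ quasi-compact open.
\end{enumerate}

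\textbf{Passing to a ring.} I would apply the Jaffard--Ohm construction over a field $k$ to the $\ell$-group $G$. This gives a B\'{e}zout domain $R_0$ with group of divisibility $G$ and $\spec(R_0)\setminus\{(0)\}\cong\spec(G)\cong X$. If $X$ already has a unique generic point, one instead removes the redundant generic point, or quotients $R_0$ by the corresponding prime; a quotient of a B\'{e}zout ring is again B\'{e}zout. In the general forest case, I would decompose $X$ into patch-clopen connected components and use a product or quotient of B\'{e}zout rings, so that the resulting ring $R$ need not be a domain and satisfies $\spec(R)\cong X$.

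\textbf{Main obstacle.} I expect step (iii) to be the hardest: every prime $\ell$-ideal $P$ should come from a point. My first attempt would be to consider the family $\{C\in A(X): \chi_C\notin P\}$, show that it is a prime filter of constructible sets, and then use patch-compactness to obtain a point in its intersection. Sobriety of the spectral space $X$ should then single out the most special such point $x$, and I would verify $P=P_x$ using the tree property.
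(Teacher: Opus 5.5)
Your overall architecture matches the construction the paper attributes to Fischer: $G=C(X^{\mathrm{patch}},\mathbb{Z})$ (which is what your $\mathbb{Z}$-combinations of the $\chi_C$ give), a lexicographic positive cone along the chains $G(x)$, and then Jaffard--Ohm. However, your positive cone is oriented the wrong way, and steps (i)--(iv) all depend on it. The sign of $f$ along $G(x)$ has to be read at the most \emph{generic} point of $\supp(f)\cap G(x)$. That is, $f\in G^+$ iff $f>0$ on $\msupp(f)$, as in the paper. The reason is that in the valuation ring at $x$ the component of the coarser valuation, which belongs to the more generic prime, dominates; this is why $g_a(p)=1$ outweighs $g_a(y)=-1$ in Thm.~\ref{thm:counterexampleToLCMSplitting}.

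If instead you take the first nonzero value met when reading from $x$, then at every $x$ with $f(x)\neq 0$ the test point is $x$ itself, so your cone is just the pointwise cone. That order is a lattice order with no hypothesis on $X$, which is already a warning sign. Its prime $\ell$-ideals are the sets $\{f: f(x)=0\}$, so its spectrum is the Boolean space $X^{\mathrm{patch}}$ with no nontrivial specializations. Moreover, your $P_x$ are not prime. In the Sierpi\'nski space $X=\{\eta<_s x\}$ one has, pointwise, $\chi_{\{\eta\}}\wedge\chi_{\{x\}}=0\in P_x=\{0\}$, while neither factor lies in $P_x$.

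Independently of the cone, (ii) and (iv) contradict each other. In the hull-kernel topology specialization is inclusion of $\ell$-ideals, so an order-reversing bijection cannot be a homeomorphism onto it. With the correct cone you get $\{x: g\notin P_x\}=\overline{\supp(g)}$; for $g=\chi_U$ this is the closed set $\overline{U}$, not an open one. Hence $x\mapsto P_x$ is a homeomorphism from $X$ onto $\spec(G)$ with the \emph{inverse} of the hull-kernel topology. This is what matches $\spec(D)\setminus\{(0)\}$, since $\mathfrak{p}\mapsto w(D_{\mathfrak p}^\times)$ reverses inclusion. This is also a second place where the hypothesis is used: $\overline{\supp(g)}$ must be constructible.

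Step (iii) fails as planned even with the correct cone. The lattice meet is not the pointwise minimum, so $\{C\in A(X):\chi_C\notin P\}$ need not be closed under intersections. In the Sierpi\'nski example, $G\cong\mathbb{Z}^2$ ordered lexicographically with the $\eta$-coordinate dominant, $P_x=\{0\}$, and $\chi_{\{\eta\}}\wedge\chi_{\{x\}}=\chi_{\{x\}}$. So the disjoint sets $\{\eta\}$ and $\{x\}$ both lie in your family.

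The fix is to restrict to closed constructible sets $Z$, which are up-sets for $\le_s$. For these, $\chi_Z\wedge\chi_{Z'}=\chi_{Z\cap Z'}$ and $\chi_Z\vee\chi_{Z'}=\chi_{Z\cup Z'}$, so $\{Z:\chi_Z\notin P\}$ is a prime filter. By patch compactness its intersection is a nonempty irreducible closed set. The point you want is its \emph{generic} point $x$, again not the most special one. Then $P=P_x$ follows from $0\le\chi_{\overline{\supp(g)}}\le n|g|$ and $0\le|g|\le n\chi_{\overline{\supp(g)}}$ for $n\gg0$, which once more uses that $\overline{\supp(g)}$ is constructible.

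Finally, decomposing into connected components does not work in general. Components need not be open, and there may be infinitely many: in the paper's $M$ each $\{p_i,z_i\}$ is a clopen component, while $\{p,x,y\}$ is a component that is not open. An infinite product of rings also does not have the disjoint union of the spectra as its spectrum. A uniform fix is to take the element $t$ of the Jaffard--Ohm domain $D$ with $w(t)=\mathbf{1}$, the constant function. It lies in every nonzero prime because $\mathbf{1}\notin P_x$ for all $x$. Hence $X\cong V(t)\cong\spec(D/tD)$, and $D/tD$ is B\'ezout. If $X$ has a unique minimal point $\eta$, then $D/\mathfrak{p}_\eta$ gives a domain.
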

Prop.~\ref{prop:Fischer78_2.1} does not assert that $R$ is a domain. Fischer comments in \cite[p. 1727]{Fis78} that the resulting ring is a domain
if $X$ has a unique minimal element, which is the case in our example.

Using the description of the topology above, we can check the conditions of
Prop.~\ref{prop:Fischer78_2.1}. Fig.~\ref{fig:MSpecTop} suggests that $M$ is a spectral tree, but one can also check
this formally by determining the closures of all singletons with the help of the description of closed sets
above. E.g., any closed set containing $p$ must also contain $\{x,y\}$. Thus $x,y$ are specializations of $p$, and since
$\{p,x,y\}$ is closed, they are also the only specializations, etc.

To see that -- all in the spectral topology -- the closure of any constructible set $C\subseteq M$ is constructible,
we distinguish two cases. In the first case, $p\not\in C$, we have $C=\{z_i:i\in Z\}\cup\{p_i:i\in P\}\cup L$ with
$L\subseteq\{x,y\}$, and $C$ is finite. The closure is $\overline{C}^{\text{cl}}=\{z_i:i\in Z\cup P\}\cup\{p_i:i\in
P\}\cup L$, again finite, and easily seen to be constructible. If $p\in C$, then $C=\{z_i:i\in Z\}\cup\{p_i:i\in P\}\cup
L$ with $L\subseteq\{p,x,y\}$ such that $Z,P$ are cofinite. The closure is $\overline{C}^{\text{cl}}=\{z_i:i\in
Z\cup P\}\cup\{p_i:i\in P\}\cup \{p,x,y\}$, again with $Z\cup P,P$ cofinite, which is clearly constructible by
the list above.

According to Fischer's proof~\cite[Thm. 2.1]{Fis78}, the lattice ordered group
$G=C(X,\mathbb{Z})=C(X^{\text{patch}},\mathbb{Z}^{\text{disc}})$ consists of integer-valued functions that are
continuous for the patch topology on $X$ and the discrete topology on $\mathbb{Z}$. For $f\in G$, denote by
$\supp(f)\subseteq X$ the support of $f$ defined by $\supp(f)=\{x\in X:f(x)\neq 0\}$, and by $\msupp(f)\subseteq
\supp(f)\subseteq X$ the minimal support of $f$ (with respect to the specialization order) defined by $\msupp(f)=\{x\in
X:f(x)\neq 0\wedge \forall y\in X: y<_sx \Rightarrow f(y)=0\}$. The \textbf{positive cone} is $G^+=\left\{f\in G:
\forall x\in\msupp(f): f(x)>0\right\}$. Note that since $\msupp(0)\subseteq\supp(0)=\emptyset$, the positivity condition
is vacuous for the $0$ function and thus $0\in G^+$. The \textbf{lattice order}, which is a \textbf{partial order} in
general (i.e. some pairs of elements may be incomparable), is defined via the positive cone: $f\geq_G g\Leftrightarrow
f-g\in G^+$.

In our case, $X=M$, we describe $G_M$ and $G_M^+$ explicitly. For $f\in G_M$, consider $f(p)\in\mathbb{Z}$. In the
discrete topology, $\{f(p)\}$ is open. Continuity forces $U_p=f^{-1}\left(\{f(p)\}\right)\subseteq M$ to be
open (in the patch topology), and the only way this can happen according to the description of the topology above is with
$|M\setminus U_p|<\infty$, i.e. if $U_p$ is cofinite. So, except for finitely many (but otherwise arbitrary) points from
$\{z_i,p_i: i\in\mathbb{N}\}\cup\{x,y\}$, $f$ takes the value $f(p)$. This also implies that the image of $f$ is
finite: $|f(M)|<\infty$.

Note that $f\in G^+$ need not take only nonnegative values. $f(x)<0$ is allowed, as long as the minimal generization
$y<_sx$ with $f(y)\neq0$ satisfies $f(y)>0$. In this case $\msupp(f)\subsetneq\supp(f)$, i.e., the minimal support is a proper subset of the
support. This also has consequences for the lattice operations $\inf,\sup$, which correspond directly to $\gcd,\lcm$ in
the constructed B\'{e}zout domain. For $f,g\in G$ and $x\in X$, we set 
\[
\inf(f,g)(x)=\begin{cases}f(x), &\text{ if }G(x)\cap\msupp(f-g)=\emptyset\\
f(x), &\text{ if }\{y\}=G(x)\cap\msupp(f-g)\neq\emptyset\wedge f(y)< g(y)\\ 
g(x), &\text{ if }\{y\}=G(x)\cap\msupp(f-g)\neq\emptyset\wedge f(y)> g(y).\end{cases}
\]
Note that since each $G(x)$ is a totally ordered set ($X$ is assumed to be a spectral tree), the intersection with the
minimal support $G(x)\cap\msupp(f-g)=G_{x,f,g}$ is either a singleton or empty. We can describe $G_{x,f,g}$ as the
minimal generization $y\leq_s x\in X$ of $x$ where $f$ and $g$ differ, if they differ at all. If they do not differ (on
the generizations of $x$ -- outside these generizations, $f$ and $g$ may still differ), we take the only meaningful value
$f(x)=g(x)$. If they differ, the smaller value on the minimal differing $y$ determines which value to take, i.e., we take $f(x)$ if
$f(y)<g(y)$, and $g(x)$ if $f(y)>g(y)$. If the space $X$ has proper generizations, this inevitably
leads to higher-rank valuations. This is actually the case for $X=M$, giving a posteriori insight into the failure of the
initial strategy for a counterexample. Similarly,
\[
\sup(f,g)(x)=\begin{cases}f(x), &\text{ if }G(x)\cap\msupp(f-g)=\emptyset\\
f(x), &\text{ if }\{y\}=G(x)\cap\msupp(f-g)\neq\emptyset\wedge f(y)> g(y)\\ 
g(x), &\text{ if }\{y\}=G(x)\cap\msupp(f-g)\neq\emptyset\wedge f(y)< g(y).\end{cases}
\]

I summarize these findings in the following definition for $M$.
\begin{definition}\label{def:groupG}
The lattice ordered group $G_M\subset\text{Map}(M,\mathbb{Z})$ consists of those functions $f:M\rightarrow\mathbb{Z}$ for
which all but finitely many values agree with $f(p)$, i.e. for which $|f^{-1}(\mathbb{Z}\setminus\{f(p)\})|<\infty$.\\ 
The positive cone $G_M^+$ consists of those $f\in G_M$ such that, for all $i\in\mathbb{N}$, we have $f(p_i)>0 \vee
(f(p_i)=0\wedge f(z_i)\geq 0)$ and additionally $f(p)>0 \vee (f(p)=0\wedge f(x)\geq 0\leq f(y))$. This defines the
(partial) lattice order via $f\geq_{G_M}g\Leftrightarrow f-g\in G_M^+$.\\
We also have, for $f,g\in G_M$ and $m\in M$,
\begin{gather*}
\inf(f,g)(m)=\begin{cases}
\min(f(m),g(m)) &\text{ if } m\in\{p_i:i\in\mathbb{N}\}\cup\{p\}\\
f(m) &\text{ if } m=z_i\wedge f(p_i)< g(p_i)\\
\min(f(m),g(m)) &\text{ if } m=z_i\wedge f(p_i)= g(p_i)\\
g(m) &\text{ if } m=z_i\wedge f(p_i)> g(p_i)\\
f(m) &\text{ if } m\in\{x,y\}\wedge f(p)< g(p)\\
\min(f(m),g(m)) &\text{ if } m\in\{x,y\}\wedge f(p)= g(p)\\
g(m) &\text{ if } m\in\{x,y\}\wedge f(p)> g(p)
\end{cases}\\
\sup(f,g)(m)=\begin{cases}
\max(f(m),g(m)) &\text{ if } m\in\{p_i:i\in\mathbb{N}\}\cup\{p\}\\
f(m) &\text{ if } m=z_i\wedge f(p_i)> g(p_i)\\
\max(f(m),g(m)) &\text{ if } m=z_i\wedge f(p_i)= g(p_i)\\
g(m) &\text{ if } m=z_i\wedge f(p_i)< g(p_i)\\
f(m) &\text{ if } m\in\{x,y\}\wedge f(p)> g(p)\\
\max(f(m),g(m)) &\text{ if } m\in\{x,y\}\wedge f(p)= g(p)\\
g(m) &\text{ if } m\in\{x,y\}\wedge f(p)< g(p)
\end{cases}
\end{gather*}
\end{definition}

The final step is the construction of the actual B\'{e}zout domain $R$ from a lattice ordered group $G$. Here I follow
Gilmer~\cite[Thm. 18.6]{Gil92}. Also note that the notions of \textbf{semi-/demivaluation resp. semi-/demivaluation
  ring} employed at the end of its proof can be found in \cite[Ch. 16]{Gil92}. We start with any field $k$, for
concreteness, $k=\mathbb{Q}$, and form the group ring $k[G]$. Associate to each $g\in G$ a formal symbol $x^g$. Then
elements of $k[G]$ have a canonical form $\sum_{i=1}^na_{g_i}x^{g_i}$, unique up to the order of summation, with
$a_{g_i}\in k\setminus\{0\}$ and $(g_i)_{i=1}^n\in G^n$ a finite sequence of pairwise distinct elements of $G$ for some
integer $n\in\mathbb{N}$. Note that for $n=0$, we get the zero $0=0_{k[G]}\in k[G]$, an empty sum. For $g_1=0:
X\rightarrow\mathbb{Z},x\mapsto 0$ and $a_{g_1}=1\in k$, we obtain the unit $1=1_{k[G]}=1\cdot x^{0}\in k[G]$ as a one-element
sum. For canonical $r=\sum_{i=1}^na_{g_i}x^{g_i}\in k[G]\setminus\{0\}$, we set its value $w(r)=\inf((g_i)_{i=1}^n)$ --
recall that, similar to Rem.~\ref{rmk:multipleGCD}, multiple infima in a lattice ordered group can be well-defined through
iteration in the same way as multiple gcds in a GCD domain.

After establishing the relevant properties, one forms the fraction field $\Frac(k[G])$ and extends $w$ as
follows: any nonzero element of $\Frac(k[G])$ can be written (non-uniquely) as a quotient of canonical polynomials,
i.e. $r=\frac{\sum_{i=1}^ma_{f_i}x^{f_i}}{\sum_{i=1}^nb_{g_i}x^{g_i}}$, and
$w(r)=w\left(\sum_{i=1}^ma_{f_i}x^{f_i}\right)-w\left(\sum_{i=1}^nb_{g_i}x^{g_i}\right)=\inf((f_i)_{i=1}^m)-\inf((g_i)_{i=1}^n)\in
G$ yields a well-defined demivaluation. Its demivaluation ring $R=\Frac(k[G])_w$ consists of those
$r\in\Frac(k[G])\setminus\{0\}$ with $w(r)\geq_G 0$, i.e. $w(r)\in G^+$, together with $0$ (one could set
$w(0)=\infty$). The proof also shows that it is always possible to write a nonzero ring element $r\in R\setminus\{0\}$
in the form $r=\frac{1_k\cdot x^{w(r)}}{1_{k[G]}}\cdot r'$ for some unique $r'\in R$ with $w(r')=0$, i.e. with $r'$ a
unit. We shorten the notation to $r=x^{w(r)}\cdot r'$. For $G=G_M$, this finally realizes the B\'{e}zout domain $R_V$
explicitly in which the counterexample to gf abstractly took place. To summarize:

\begin{definition}\label{def:bezoutDomainR}
The ring $R_V\subset\Frac(\mathbb{Q}[G_M])$ consists of quotients (with nonzero denominator) of canonical group ring
polynomials $r=\frac{\sum_{i=1}^ma_{f_i}x^{f_i}}{\sum_{i=1}^nb_{g_i}x^{g_i}}$ with $m,n\in\mathbb{N}, f_i,g_i\in
G_M=C(M,\mathbb{Z})$, and $a_{f_i},b_{g_i}\in\mathbb{Q}\setminus\{0\}$, such that its demivaluation $w(r)\geq_{G_M}0$ is
nonnegative, i.e. such that $\inf((f_i)_{i=1}^m)-\inf((g_i)_{i=1}^n)\in G_M^+$, and additionally a zero $0_{R_V}\in R_V$.

The ring operations are the usual ones for such quotients. The theorems used in the construction guarantee that $R_V$ is
a B\'{e}zout domain with $\spec(R_V)\cong\spec(G_M)\cup\{(0)\}\cong M'$ and $\mspec(R_V)\cong V$. Any nonzero ring
element $r\in R_V\setminus\{0\}$ can be written as $r=x^{w(r)}\cdot r'$ for a unique unit $r'\in R_V^{\times}$ with
$w(r')=0$. While $r'$ is unique, its presentation as a quotient of canonical group ring polynomials is not.
\end{definition}

Before presenting the counterexample to lcm splitting, I revisit the counterexample to gf from
Prop.~\ref{prop:spaceVFailureOfGF}. Recall that we showed the existence of $\tilde{a},\tilde{b}\in R_V\setminus\{0\}$
with $Z(\tilde{a})=A=V\setminus\{z_0\}, Z(\tilde{b})=B=\{y\}$, where $Z$ denotes the vanishing subspace in the maximal
spectrum. Since $Z(\tilde{a})$ is infinite, our description of closed sets in the spectral topology of $M$ forces the
vanishing subspace in the prime spectrum to contain $p$, i.e. $p\in V(\tilde{a})$ or $w(\tilde{a})(p)>0$, while
$Z(\tilde{b})$ finite forces $p\not\in V(\tilde{b})$ resp. $w(\tilde{b})(p)=0$. Since $y\in\overline{\{p\}}^{\text{cl}}$,
no power of $\tilde{b}$ can ever surpass $\tilde{a}$ in the rank-2 valuation at $y$,
i.e. $v_y(\tilde{a})\nless v_y(\tilde{b}^{N})$ for all $N\in\mathbb{N}$. Thus, counterexamples to gf of the type in
\cite[Sec. 2.10]{vDM90} will never yield a counterexample to lcm splitting.

Explicitly, take $g_A, g_B:M\rightarrow\mathbb{Z}$ such that $g_A(m)=1$ for all $m\in M\setminus\{z_0,p_0,y\}$,
$g_B(m)=0$ for all $m\in M\setminus\{y\}$, and $g_A(z_0)=g_A(p_0)=g_A(y)=0$, while $g_B(y)=1$. Set
$\tilde{a}=\frac{1\cdot x^{g_A}}{1}=x^{g_A}, \tilde{b}=\frac{1\cdot x^{g_B}}{1}=x^{g_B}\in R_V\setminus\{0\}$. Then
$\tilde{b}\mid \tilde{a}$. Thus, by (2), we have $\lcm(\tilde{a},\tilde{b})\sim \tilde{a}, \gcd(\tilde{a},\tilde{b})\sim
\tilde{b}$. Hence $c=\tilde{a},d=1_{R_V}$ are easily checked to witness lcm splitting of $(\tilde{a},\tilde{b})$.

Note that the topological description of Prop.~\ref{prop:spaceVFailureOfGF} 
neither fixes the ring $R_V$ (e.g. replace $k=\mathbb{Q}$ by another field)
nor $(\tilde{a},\tilde{b})$ uniquely, e.g. $(4\tilde{a}^5,-\frac{6}{7}\tilde{b}^8)$ 
would be equally valid as 
a counterexample to gf. I will use the choice just described in the following when referring to 
Prop.~\ref{prop:spaceVFailureOfGF}.

\begin{theorem}\label{thm:counterexampleToLCMSplitting}
For $G_M$ as in Def.~\ref{def:groupG}, take $g_a, g_b:M\rightarrow\mathbb{Z}$ such that $g_a(m)=1=g_b(m)$ for all $m\in
M\setminus\{x,y\}$ and $g_a(x)=0=g_b(y), g_a(y)=-1=g_b(x)$. With the notation of Def.~\ref{def:bezoutDomainR}, set
$a=x^{g_a}, b=x^{g_b}\in R_V\setminus\{0\}$. Then $a,b$ admit no lcm splitting, and lcm splitting fails in $R_V$.
\end{theorem}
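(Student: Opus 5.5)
The plan is to translate the whole problem into the lattice ordered group $G_M$ via the demivaluation $w$ of Def.~\ref{def:bezoutDomainR}, and then to show by a short case analysis at the point $p$ that no admissible pair of group elements exists.

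\textbf{Step 1 (dictionary).} First I record how $w$ encodes divisibility in $R_V$. From the construction in \cite[Thm. 18.6]{Gil92}, $w$ is multiplicative into $G_M$, i.e. $w(rs)=w(r)+w(s)$, and every nonzero $r$ can be written as $r=x^{w(r)}r'$ with $r'$ a unit. Three consequences follow:
\begin{itemize}
\item $r\mid s$ iff $w(s)-w(r)\in G_M^+$;
\item $r\sim s$ iff $w(r)=w(s)$;
\item $w$ identifies the divisibility group of $R_V$ with $G_M$ as ordered groups.
\end{itemize}
Hence $w(\gcd(r,s))=\inf(w(r),w(s))$ and $w(\lcm(r,s))=\sup(w(r),w(s))$, computed by the formulas of Def.~\ref{def:groupG}. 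I expect this step to be the main obstacle. It is the one point where I rely on the construction rather than on explicit computation: I need that $w$ is onto $G_M$ and order-reflecting, so that gcd and lcm in $R_V$ really become $\inf$ and $\sup$. I would cite Gilmer's theorem, which says exactly that $G_M$ is the group of divisibility of $R_V$, rather than reprove it.

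\textbf{Step 2 (the data).} I check that $g_a,g_b\in G_M^+$. Both equal $1$ at $p$ and at every $p_i$, so they are positive, and $a,b$ are nonzero nonunits of $R_V$. Next I compute $l=\sup(g_a,g_b)$. Since $g_a(p)=g_b(p)$, the $\max$ rule of Def.~\ref{def:groupG} applies at $x$ and $y$. This gives $l(x)=\max(0,-1)=0$, $l(y)=\max(-1,0)=0$, and $l=1$ at all other points.

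Now suppose $c,d$ form an lcm splitting of $a,b$, and set $g_c=w(c)$, $g_d=w(d)$. By Step 1 the four conditions become:
\begin{itemize}
\item $g_c+g_d=l$;
\item $g_a-g_c\in G_M^+$;
\item $g_b-g_d\in G_M^+$;
\item $\inf(g_c,g_d)=0$.
\end{itemize}

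\textbf{Step 3 (case analysis at $p$).} At $p$, the $\inf$ is a plain $\min$, so $\min(g_c(p),g_d(p))=0$. Together with $g_c(p)+g_d(p)=l(p)=1$, exactly one of $g_c(p),g_d(p)$ equals $1$ and the other equals $0$.

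Suppose first $g_c(p)=1$ and $g_d(p)=0$. Then the $\inf$ formula at $x$ and $y$ takes the values of $g_d$, which forces $g_d(x)=g_d(y)=0$. Since $l(x)=l(y)=0$, this gives $g_c(x)=g_c(y)=0$. Now consider $h=g_a-g_c$. It has $h(p)=1-1=0$, so positivity of $h$ requires $h(x)\ge0$ and $h(y)\ge 0$. But $h(y)=-1-0=-1$, so $c\nmid a$, a contradiction.

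The case $g_d(p)=1$, $g_c(p)=0$ is symmetric under swapping $x\leftrightarrow y$ and $a\leftrightarrow b$. There $g_c(x)=g_c(y)=0$, hence $g_d(x)=g_d(y)=0$. Then $(g_b-g_d)(p)=0$ while $(g_b-g_d)(x)=-1$, so $d\nmid b$, again a contradiction.

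So no lcm splitting of $a,b$ exists, and therefore $R_V$ is not lcm splitting. Throughout I only use values of the group elements at $p,x,y$, so continuity of $g_c,g_d$ (which holds automatically for elements of $G_M$) plays no role.
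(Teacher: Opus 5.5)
Your proof is correct and is essentially the paper's argument. You translate the problem to $G_M$ via $w$, use $\min(g_c(p),g_d(p))=0$ and $g_c(p)+g_d(p)=1$ to split into the cases $(1,0)$ and $(0,1)$, and reach a contradiction at $x$ or $y$ from the $\inf$ rule, the sum condition and divisibility. The differences are cosmetic: you treat both cases explicitly where the paper invokes symmetry, and you use $\inf(g_c,g_d)=0$ first to force $g_c(y)=0$ and hence $c\nmid a$, whereas the paper uses $c\mid a$ first to force $g_d(y)\geq 1$ and then contradicts the splitting condition.
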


\begin{proof}
We first verify that all quantities are well-defined in the sets we asserted above. Since $g_a,g_b$ take the value $1$
outside the finite set $\{x,y\}$, in particular at $p$, these functions belong to $G_M$. They even belong to
$G_M^+$: they are nonnegative except for $g_a(y)=-1=g_b(x)$, and since $x>_{s}p<_{s}y$ and
$g_a(p)=g_b(p)=1>0$, this is compatible with being in $G_M^+$. In particular, $w(a)=g_a-0=g_a, w(b)=g_b-0=g_b\in G_M^+$.
Thus $a,b\in R_V\subseteq\Frac(\mathbb{Q}[G_M])$ are nonzero ring elements.

Applying the description of the supremum in Def.~\ref{def:groupG}, we see that $w(\lcm(a,b))=\sup(w(a),w(b))=g_l$ with
$g_l:M\rightarrow\mathbb{Z}$ where $g_l(m)=1$ for all $m\in M\setminus\{x,y\}$ and $g_l(x)=0=g_l(y)$. This lcm can be
represented by $l=x^{g_l}$ (uniquely up to units). 

Now suppose that $(c,d)\in R_V^2$ is an lcm splitting of $(a,b)$. Up to units, we can represent them by monomials
$c=x^{g_c},d=x^{g_d}$, and by the properties of $G_M^+$, we have $g_c(p)\geq 0\leq g_d(p)$. Since $cd\sim\lcm(a,b)=l$ and
multiplication in $R_V$ corresponds to addition in $G_M$, this implies $g_c(p)+g_d(p)=1$. By symmetry, we may assume
$g_c(p)=1,g_d(p)=0$. Since $c\mid a$ and $g_c(p)=1=g_a(p)$, this forces $g_c(y)\leq g_a(y)=-1$. Since $cd\sim l$, we
have at $y$ that $-1+g_d(y)\geq g_c(y)+g_d(y)=g_l(y)=0$, i.e. $g_d(y)\geq 1$. We also need to satisfy the splitting condition
$\gcd(c,d)\sim1$, i.e. $\inf(g_c,g_d)(m)=0$ for all $m\in M$. In particular, for $m=y$ we need
$\inf(g_c,g_d)(y)=0$. Since $y>_s p$, we first check at $p$ and have $g_c(p)=1>0=g_d(p)$. Thus
$\inf(g_c,g_d)(y)=g_d(y)\geq 1$, which implies $\inf(g_c,g_d)(y)\neq 0$, a contradiction.
\end{proof}

I summarize some further properties of $R_V$.
\begin{cor}\label{cor:SimplePropertiesOfR}
$R_V$ is a B\'{e}zout domain that is neither gf, nor adequate, nor lcm splitting, but it is an elementary divisor domain and K-Hermite.
\end{cor}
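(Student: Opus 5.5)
The corollary bundles five claims. Most follow directly from earlier results, so I will assemble them one at a time.

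\textbf{B\'{e}zout domain.} This is part of Def.~\ref{def:bezoutDomainR}. It is guaranteed by Jaffard--Ohm--Gilmer applied to $G_M$, together with Prop.~\ref{prop:Fischer78_2.1}. The unique minimal point $(0)$ makes $R_V$ a domain.

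\textbf{Not lcm splitting.} This is exactly Thm.~\ref{thm:counterexampleToLCMSplitting}.

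\textbf{Not adequate, not gf.} Thm.~\ref{thm:adequateImpliesLCMSplitting} says an adequate B\'{e}zout domain is lcm splitting. By contraposition, $R_V$ is not adequate. Since gf implies adequate for B\'{e}zout domains (Rem.~\ref{rmk:propertiesOfGCDDomains}, Lem.~\ref{lem:gfImpliesAdequateForPairs}), $R_V$ is not gf either. Non-gf is also witnessed directly by Prop.~\ref{prop:spaceVFailureOfGF}.

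\textbf{K-Hermite.} By the equivalence cited from Lam in Rem.~\ref{rmk:propertiesOfGCDDomains}, every B\'{e}zout domain is K-Hermite.

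\textbf{Elementary divisor.} This is the only claim needing real input, since the implications cited earlier run from adequate, which we have just ruled out. The plan is to use the known criterion that a B\'{e}zout domain with only finitely many maximal ideals, or more generally a finite-dimensional one with suitable topological data, is an elementary divisor domain.

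The most promising route is the result of Gillman--Henriksen and Larsen--Lewis--Shores: a B\'{e}zout domain is an elementary divisor domain iff for all $a,b,c$ with $\gcd(a,b,c)\sim1$ there are $p,q$ with $\gcd(pa,pb+qc)\sim1$. Equivalently, it suffices to show that every finitely generated ideal's quotient behaves well. A simpler sufficient condition, which I would invoke first, is the theorem that every B\'{e}zout domain of finite Krull dimension in which each nonzero prime is contained in only finitely many maximal ideals is an elementary divisor domain. Here $\spec(R_V)\cong M'$ has dimension $2$, and each prime lies in at most two maximal ideals: $p$ lies under $x,y$, and every other prime lies under exactly one maximal ideal.

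Failing that, I would use the reduction that the elementary divisor property can be checked modulo nonzero elements $r$. Indeed $R/(r)$ is semilocal-like whenever $Z(r)$ is finite; when $Z(r)$ is infinite, I would handle the Hermite-type $1\times 2$ and $2\times2$ reductions directly using the explicit valuation description via $G_M$.

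The main obstacle is this last step: pinning down a citable elementary-divisor criterion that applies to $V$, where $x,y$ are inseparable, or else verifying the Kaplansky-type condition by hand via monomial representatives $x^{g}$ and the $\inf$ formulas of Def.~\ref{def:groupG}.
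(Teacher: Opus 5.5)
Your arguments for four of the five claims are the paper's own: B\'{e}zout by construction, failure of lcm splitting from Thm.~\ref{thm:counterexampleToLCMSplitting}, non-adequacy by contraposition of Thm.~\ref{thm:adequateImpliesLCMSplitting}, non-gf via gf $\Rightarrow$ adequate, and K-Hermite from the B\'{e}zout property via Lam. The genuine gap is the elementary divisor claim. You leave it open, and none of the routes you list is carried through.
\begin{itemize}
\item The semilocal criterion does not apply, since $V$ is infinite.
\item Your reduction to $R_V/(r)$ works only when $Z(r)$ is finite. It breaks down exactly for $r$ with $p\in V(r)$, e.g.\ $r=\tilde a$. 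There $Z(r)$ is cofinite, and $R_V/(r)$ is neither semilocal nor even a ring in which each prime lies in a unique maximal ideal, because $p$ lies under both $x$ and $y$. For this case you only promise a computation by hand.
\item Kaplansky's criterion is stated but never checked.
\item The theorem about nonzero primes lying in only finitely many maximal ideals is quoted without a source, and you do not commit to it. The argument you actually sketch, via semilocal quotients, only proves the finite-character version, where each nonzero \emph{element} lies in finitely many maximal ideals. $R_V$ fails this because $Z(\tilde a)$ is infinite.
\end{itemize}
Your check that every nonzero prime of $M'$ lies in at most two maximal ideals is correct. If you can supply a reference for that prime-wise criterion, this would give a valid alternative route. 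As written, though, the step rests on nothing.

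The missing ingredient is the criterion the paper uses. By \cite[Thm. 3.1.2]{RRW24}, every B\'{e}zout domain with countably many maximal ideals is an elementary divisor domain, and $\mspec(R_V)\cong V=\{z_i:i\in\mathbb{N}\}\cup\{x,y\}$ is countable. This settles the claim in one line, with no analysis of the inseparable pair $x,y$ and no computations with the $\inf$ formulas of Def.~\ref{def:groupG}. It then also yields K-Hermite a second time via elementary divisor $\Rightarrow$ K-Hermite.
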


\begin{proof}
By construction (cf. Def.~\ref{def:bezoutDomainR}), $R_V$ is a B\'{e}zout domain. By
Thm.~\ref{thm:counterexampleToLCMSplitting}, it is not lcm splitting.\\
By Thm.~\ref{thm:adequateImpliesLCMSplitting}, a B\'{e}zout domain such as $R_V$ that is not lcm splitting cannot be
adequate.\\
By ``gf $\Rightarrow$ adequate'' (cf. Rem.~\ref{rmk:propertiesOfGCDDomains}), $R_V$ cannot be gf either.\\
By \cite[Thm. 3.1.2]{RRW24}, a B\'{e}zout domain with countably many maximal ideals is an elementary divisor domain. Clearly,
$V=\{z_i:i\in\mathbb{N}\}\cup\{x,y\}$ is countable. Thus $R_V$ is an elementary divisor domain.\\
By ``elementary divisor $\Rightarrow$ K-Hermite'' (cf. Rem.~\ref{rmk:propertiesOfGCDDomains}), $R_V$ is
K-Hermite. Additionally, $R_V$ is a B\'{e}zout domain, which is equivalent to K-Hermite for domains
(cf. Rem.~\ref{rmk:propertiesOfGCDDomains}).
\end{proof}

Next, I examine some properties of pairs of elements of $R_V$.
\begin{prop}\label{prop:RIsNotAdequate}
The counterexample to gf from Prop.~\ref{prop:spaceVFailureOfGF} is also a counterexample to adequate. Let $g_A,g_B\in
G_M$ be given by $g_A(m)=1$ for all $m\in M\setminus\{z_0,p_0,y\}$ and $g_A(z_0)=g_A(p_0)=g_A(y)=0$, and $g_B(m)=0$ for all $m\in
M\setminus\{y\}$ and $g_B(y)=1$. Then $\tilde{a}=x^{g_A}, \tilde{b}=x^{g_B}$ form a counterexample.\\
However, in reversed order, $(\tilde{b},\tilde{a})$ satisfies gf and adequate.
\end{prop}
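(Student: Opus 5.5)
The plan is to argue topologically on $\mspec(R_V)\cong V$, as in the proof of Prop.~\ref{prop:spaceVFailureOfGF}, and not with the group $G_M$. Two standard facts about B\'{e}zout domains are needed. First, for $r,s\neq0$ the ideal identity $(r,s)=(\gcd(r,s))$ gives $Z(\gcd(r,s))=Z(r)\cap Z(s)$. Second, $\gcd(r,s)\sim1$ holds iff $Z(r)\cap Z(s)=\emptyset$: a nonunit gcd lies in some maximal ideal, and conversely $(r,s)=R$ forces the gcd to be a unit. Recall $Z(\tilde{a})=A=V\setminus\{z_0\}$ and $Z(\tilde{b})=B=\{y\}$.

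\emph{Failure of adequate for $(\tilde{a},\tilde{b})$.} Suppose $\tilde{a}=cd$ with $\gcd(c,\tilde{b})\sim1$ and every nonunit divisor $d'$ of $d$ satisfying $\gcd(d',\tilde{b})\not\sim1$. I will show $Z(d)\subseteq\{y\}$ and then reuse the final contradiction of Prop.~\ref{prop:spaceVFailureOfGF}. Let $m\in Z(d)$ with $m\neq y$, so $m\in\{x\}\cup\{z_i:i\in\mathbb{N}\}$.
\begin{enumerate}
\item[(i)] The singleton $\{m\}$ is closed with quasi-compact complement. For $m=z_i$ the complement contains $x,y$ and is cofinite, hence quasi-compact open. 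For $m=x$ the complement is $V_{y,0}$.
\item[(ii)] By Lem.~\ref{lem:basicClosedSetsInBezoutSpectra} there is $e\in R_V$ with $Z(e)=\{m\}$.
\item[(iii)] Put $d'=\gcd(d,e)$. Then $d'\mid d$ and $Z(d')=Z(d)\cap\{m\}=\{m\}\neq\emptyset$, so $d'$ is a nonunit.
\item[(iv)] But $Z(d')\cap Z(\tilde{b})=\{m\}\cap\{y\}=\emptyset$, so $\gcd(d',\tilde{b})\sim1$. This contradicts the adequate condition.
\end{enumerate}
Hence $Z(d)\subseteq\{y\}$. From $Z(c)\cup Z(d)=A$ and $Z(c)\cap\{y\}=\emptyset$ we get $Z(c)=A\setminus\{y\}=V_{x,1}$. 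This set is not closed, which is a contradiction.

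\emph{The reversed pair $(\tilde{b},\tilde{a})$.} For gf, take $c=1$ and $d=\tilde{b}$. Then $\tilde{b}=cd$ and $\gcd(1,\tilde{a})\sim1$. Moreover $\tilde{a}\in(\tilde{b})\subseteq\sqrt{(\tilde{b})}$, because $\tilde{b}\mid\tilde{a}$: the function $g_A-g_B$ lies in $G_M^+$, as already noted before Thm.~\ref{thm:counterexampleToLCMSplitting}. For adequate, one can invoke Lem.~\ref{lem:gfImpliesAdequateForPairs}, or check the same pair directly. Any nonunit $d'\mid\tilde{b}$ has $\emptyset\neq Z(d')\subseteq Z(\tilde{b})=\{y\}\subseteq Z(\tilde{a})$, so $\gcd(d',\tilde{a})\not\sim1$.

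The only real obstacle is step (iii): producing a nonunit divisor of $d$ that avoids $y$. This rests on the quasi-compactness check in (i) and on the identity $Z(\gcd)=Z\cap Z$. Both are straightforward from the explicit description of the topology on $V$.
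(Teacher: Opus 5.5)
Your proposal is correct. The reversed-pair part matches the paper's: the same $(c,d)=(1,\tilde b)$, and $g_A-g_B$ is exactly the paper's $g_{D''}$. Your direct check of adequate via $\emptyset\neq Z(d')\subseteq\{y\}\subseteq Z(\tilde a)$ is a valid alternative to citing Lem.~\ref{lem:gfImpliesAdequateForPairs}.

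For the failure of adequate you take a genuinely different route. The paper computes inside $G_M$:
\begin{itemize}
\item it reduces to monomials $c=x^{g_C}$, $d=x^{g_D}$;
\item it uses $\inf(g_C,g_B)(y)$ to force $g_C(p)=0$;
\item it uses that $g_C$ cofinitely equals $g_C(p)$ to find $i\neq 0$ with $g_C(z_i)=g_C(p_i)=0$, so $g_D(z_i)=1$;
\item it writes down an explicit nonunit divisor $d'=x^{g_{D'}}$ supported at $z_i$ and checks $\gcd(d',\tilde b)\sim 1$ with the $\inf$ formula.
\end{itemize}
You stay entirely in $\mspec(R_V)\cong V$. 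The dictionary $Z(\gcd(r,s))=Z(r)\cap Z(s)$ and $\gcd(r,s)\sim1\Leftrightarrow Z(r)\cap Z(s)=\emptyset$, together with Lem.~\ref{lem:basicClosedSetsInBezoutSpectra} applied to singletons, shows that adequate forces $Z(d)\subseteq Z(\tilde b)$. The non-closedness of $V_{x,1}$ from Prop.~\ref{prop:spaceVFailureOfGF} then finishes.

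Both arguments share a skeleton: a point of $Z(d)$ other than $y$ yields a nonunit divisor of $d$ vanishing only there.

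What your route buys: it needs neither the Fischer--Jaffard construction nor the monomial normal form. It therefore shows failure of adequate for \emph{every} pair with $Z(\tilde a)=A$, $Z(\tilde b)=B$ in \emph{any} B\'ezout domain with maximal spectrum $V$. That is precisely the generality of the statement's first sentence. It also shows that, here, failure of adequate is as purely topological as failure of gf, which bears on the question in Sec.~\ref{sec:final}.

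What the paper's route buys: an explicit witness $d'$, and it stays within the $G_M$ calculus that is unavoidable for Thm.~\ref{thm:counterexampleToLCMSplitting}.

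The specific $g_A,g_B$ enter your argument only through $Z(x^{g_A})=A$ and $Z(x^{g_B})=B$. The paper asserts this when it fixes this choice. Concretely, it amounts to lexicographic positivity of $g$ along the chains $p_i<_s z_i$, $p<_s x$ and $p<_s y$.
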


\begin{proof}
Assume we had a pair $(c,d)$ satisfying the adequate conditions, wlog of the form $c=x^{w(c)}=x^{g_C},
d=x^{w(d)}=x^{g_D}$. Necessarily $g_C(p)=0$, because $g_C(p)\geq 1$ would force $\inf(g_C,g_B)(y)=g_B(y)=1>0$,
contradicting $\gcd(c,\tilde{b})\sim1=x^{0}$. Since all but finitely many values of $g_C$ must agree with $g_C(p)=0$ (by
continuity of $g_C$), there is some $i\neq 0$ with $g_C(z_i)=g_C(p_i)=0$. The condition $\tilde{a}=cd$ implies
$1=g_A(z_i)=g_C(z_i)+g_D(z_i)=0+g_D(z_i)$, i.e. $g_D(z_i)=1$. Now define $g_{D'}$ by $g_{D'}(m)=0$ for all $m\in M\setminus\{z_i\}$
and $g_{D'}(z_i)=1$. By construction, $0_{G_M}<_{G_M}g_{D'}\leq_{G_M}g_D$. Thus $d'=x^{g_{D'}}\in R_V\setminus R_V^{\times}$
is a non-unit with $d'\mid d$. On the other hand, $g_{D'}(m)=0$ for all $m\neq z_i$ forces $\inf(g_{D'},g_B)(m)=0$ at the
same places $m$. Since also $g_B(z_i)=0=g_B(p_i)=g_{D'}(p_i)$, we get $\inf(g_{D'},g_B)(z_i)=0$; hence
$\gcd(\tilde{b},d')\sim1$. This contradicts the adequate property and establishes the claim.

To prove that $(\tilde{b},\tilde{a})$ satisfies gf, set $c=1, d=\tilde{b}$. Clearly $\tilde{b}=cd$ and
$\gcd(c,\tilde{a})=\gcd(1,\tilde{a})\stackrel{\text{\tiny (6), (9)}}{\sim} 1$. Let $g_{D''}\in G_M$ be given by
$g_{D''}(m)=1$ for all $m\in M\setminus\{z_0, p_0,y\}$, $g_{D''}(z_0)=g_{D''}(p_0)=0$ and $g_{D''}(y)=-1$. One checks
that $g_{D''}\in G_M^+$ and $d''=x^{g_{D''}}\in R_V$ is well-defined. By construction,
$\tilde{a}=d''\tilde{b}=d''d\in\sqrt{(d)}$, which shows the gf property. Lem.~\ref{lem:gfImpliesAdequateForPairs} also implies
the adequate property.
\end{proof}

\begin{remark}\label{rmk:propertiesOfPairsInCounterexampleRing}
Combining the results from Prop.~\ref{prop:spaceVFailureOfGF} and Prop.~\ref{prop:RIsNotAdequate} with the comments
immediately before Thm.~\ref{thm:counterexampleToLCMSplitting}, we see that the B\'{e}zout domain $R_V$ has a pair of elements
$(\tilde{a},\tilde{b})$ that violates the gf and adequate conditions, but satisfies lcm splitting. In reverse order,
$(\tilde{b},\tilde{a})$ satisfies all three properties.

Thm.~\ref{thm:counterexampleToLCMSplitting} gives a pair $(a,b)$ in the same $R_V$ that violates lcm splitting. Surprisingly,
this pair satisfies gf. Set $(c,d)=(1,a)$. Then $a=cd, \gcd(c,b)=\gcd(1,b)\stackrel{\text{\tiny (6),
    (9)}}{\sim}1$ are clearly satisfied. To see $b\in\sqrt{(d)}=\sqrt{(a)}$, it suffices to find some $f\in R_V$ with
$b^2=fa$. Set $f=x^{g_f}$ with $g_f:M\rightarrow\mathbb{Z}$ given by $g_f(m)=1$ for all $m\in M\setminus\{x,y\}$ and
$g_f(x)=-2, g_f(y)=1$. It is straightforward to check $g_f\in G_M^+$ and $b^2=fa$ as
claimed. Lem.~\ref{lem:gfImpliesAdequateForPairs} then shows that $(a,b)$ satisfies the conditions of adequate. By
the symmetry of the construction of $a$ and $b$, the reversed pair $(b,a)$ is also gf and adequate.

It is surprising that the pair $(a,b)$ failing lcm splitting satisfies the adequate property, while adequate (of all
nonzero pairs in a ring) implies lcm splitting by Thm.~\ref{thm:adequateImpliesLCMSplitting}. However, the proof of
that theorem uses the adequate properties of pairs potentially different from $(a,b)$, namely those built from $g=\gcd(a,b), \frac{a}{g},
\frac{b}{g}$, etc. This is what happens here: while $g$ is quite ``large'' compared to $a$ and $b$, the fractions
$\frac{a}{g}, \frac{b}{g}$ are ``multiplicatively small'', almost $1_{R_V}$. In fact, $\frac{b}{g}=\tilde{b}$ is the element from the
other pair. These small elements violate the adequate conditions needed in the proof of
Thm.~\ref{thm:adequateImpliesLCMSplitting}.
\end{remark}

\subsection{Second example}
\label{sec:counterexampleLCMSplitting2}

I found the second example after finishing the first. It is arguably simpler and gives a counterexample pair $(a,b)$
with properties similar to those in Sec.~\ref{sec:counterexampleLCMSplitting1}. The previous construction might,
however, be more flexible and give rise to modified examples with new properties, cf. Sec.~\ref{sec:final} for some
concrete ideas. I use the concepts and terminology developed for the first example for easier comparison and
presentation.

\newpage
\begin{figure}[!h]
\centering
\begin{tikzpicture}[croix/.pic={\draw[thick,black] (-0.1,-0.1) -- (0.1,0.1) (-0.1,0.1)--(0.1,-0.1);}]
\foreach \z in {1,2,...,6}{
\draw ({\z-1},-1) pic {croix} node[above=4]{$(f_{\z})$};
\draw ({\z-1},-1) -- (3.5,-2);
}
\draw (13,0) node{$\ldots$};
\draw (6,-1) node{$\ldots$};
\draw (8.5,-1) pic {croix} node[below=4]{$I_x$};
\draw (3.5,-2) pic {croix} node[below=4]{$(0)$};
\foreach \z in {1,2,...,6}{
\draw ({\z+6},0) pic {croix} node[above=4]{$(p_{\z})$};
\draw ({\z+6},0) -- (8.5,-1);
}
\draw (8.5,-1) -- (3.5,-2);
\end{tikzpicture}
\caption{\footnotesize The spectral tree $\spec(S)$ for $S=\mathbb{Z}+x\mathbb{Q}[x]$ with indicated specialization
  order.}
\label{fig:SSpecTop}
\end{figure}

Let $S=\mathbb{Z}+x\mathbb{Q}[x]\subset\mathbb{Q}[x]$. By \cite[Thm. 4.6 (6)]{And00}, it is a B\'{e}zout domain. By
Roitman~\cite[Ex. 3.3]{Roi13}, $S$ is even an elementary divisor domain. Properties of this ring can be found in
Hutchins~\cite[Ex. 32, p. 67]{Hut81}. According to \cite[Thm.  1.13]{Smi92}, irreducible elements of $S$ are prime and
they are classified (up to units) as follows:
\begin{enumerate}[(a)]
\item prime numbers $\{p_i: i\in\mathbb{N}_{>0}\}\subset\mathbb{Z}$ or 
\item irreducible polynomials with $1$ as constant coefficient $\{f_j: j\in\mathbb{N}_{>0}\}\subset\mathbb{Q}[x]$.
\end{enumerate}
This gives rise to four different types of prime ideals (see Fig.~\ref{fig:SSpecTop} for an illustration): 
\begin{enumerate}[(a)]
\item height 2 prime ideals of the form $(p_i)$, 
\item height 1 prime ideals of the form $(f_j)$, 
\item the height 1 prime ideal $I_x=(x,\frac{x}{2},\frac{x}{3},\frac{x}{4},\ldots)$, 
\item the height 0 prime ideal $(0)$.
\end{enumerate}

Note that the prime ideal $I_x$ is not finitely generated, reflecting that $S$ is non-Noetherian. We also have
$I_x\subsetneq(p_i)$ for every prime number $p_i$. The ideals $(p_i)$ show that \textbf{Krull's principal ideal theorem}
need not hold in non-Noetherian rings. None of the infinitely many maximal ideals are isolated in $\spec(S)$, thus
Prop.~\ref{prop:Fischer79_3.3} and the whole construction from Sec.~\ref{sec:counterexampleLCMSplitting1} does not apply
here. $S$ is not a UFD, since, e.g., $x=2\cdot(\frac{1}{2}x)=2^2\cdot(\frac{1}{2^2}x)=2^3\cdot(\frac{1}{2^3}x)=\ldots$
acquires arbitrarily many prime factors $2$. This links to $I_x$ not being finitely generated. Nevertheless, there is
``almost a unique prime factorization''. If $a\in S$ has a nonvanishing constant coefficient, it can be written as a
product of powers of irreducibles $a=\pm\prod_ip_i^{r_i}\cdot\prod_jf_j^{s_j}$. If its constant coefficient is $0$, we
have $a=\frac{m}{n}x\cdot x^{u}\cdot\prod_jf_j^{t_j}$. Also note that there are only two units, $S^{\times}=\{\pm1\}$.

For the remainder of this subsection, fix $(a,b)=\left(\frac{x}{q_a},\frac{x}{q_b}\right)$, where $q_a\neq q_b$ are
prime numbers. For concreteness, take $q_a=2, q_b=3$, but note the structural symmetry of this choice. We have
$\gcd(a,b)\sim\frac{x}{6}$ and $\lcm(a,b)\sim x$. Assume for contradiction that $(c,d)$ witnessed lcm splitting for
$(a,b)$. Then $cd\sim\lcm(a,b)\sim x$; wlog $cd=x$. For degree reasons and since the elements of degree $0$ are
integers, any factorization has the form $x=n\cdot \frac{x}{n}$ for $n\in\mathbb{Z}\setminus\{0\}$. By the structural
symmetry, we may assume $c=n,d=\frac{x}{n}$. Since $d=\frac{x}{n}=\frac{x}{n^2}\cdot n=\frac{x}{n^2}\cdot c$,
i.e. $c\mid d$, we get $\gcd(c,d)\stackrel{\text{\tiny (2)}}{\sim}c=n$. The condition $\gcd(c,d)\sim 1$ forces $c=n\in
S^{\times}=\{\pm 1\}$, wlog $c=1$. But then $d=\frac{x}{n}=x\nmid\frac{x}{3}=b$, violating one of the lcm splitting
conditions. Therefore $(a,b)$ does not admit any lcm splitting.

$(a,b)=\left(\frac{x}{2},\frac{x}{3}\right)$ satisfies gf. Set $(c,d)=\left(1,\frac{x}{2}\right)$. Clearly $a=cd$ and
$\gcd(c,b)=\gcd(1,b)\stackrel{\text{\tiny (6), (9)}}{\sim} 1$. Setting $B=\frac{2x}{9}$, we have $b^2=\frac{x^2}{9}=Bd$,
i.e. $b\in\sqrt{(d)}$. By Lem.~\ref{lem:gfImpliesAdequateForPairs}, this implies adequate. By the structural symmetry,
also $(b,a)$ is gf and adequate, too.

\begin{theorem}\label{thm:counterexampleToLCMSplitting2}
For $a=\frac{x}{2},b=\frac{x}{3}\in S\setminus\{0\}$, both $(a,b)$ and $(b,a)$ satisfy the gf and adequate
conditions but fail lcm splitting. In particular, $S$ is a non-adequate, elementary divisor domain that fails lcm splitting.
\end{theorem}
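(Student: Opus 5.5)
The proof assembles the facts established in the discussion just before the statement. I would arrange them as follows.

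First, I would record the divisibility data in $S=\mathbb{Z}+x\mathbb{Q}[x]$ that everything else uses. The units are $S^{\times}=\{\pm1\}$. An element $\frac{m}{n}x\cdot h$ with $h\in S$ divides $\frac{x}{3}$ only if $h$ is a unit and $\frac{x}{3}\big/\frac{m}{n}x=\frac{n}{3m}\in\mathbb{Z}$. From this I would verify $\gcd(a,b)\sim\frac{x}{6}$ and $\lcm(a,b)\sim x$. For the gcd: the common divisors of $\frac{x}{2}$ and $\frac{x}{3}$ are the integers $\pm1$ and the elements $\frac{x}{k}$ with $2\mid k$ and $3\mid k$. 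All of these divide $\frac{x}{6}$: for instance $\frac{x}{6}=\frac{x}{12}\cdot 2$, and the integers clearly do. For the lcm, the product formula (5) gives $\lcm(a,b)\sim ab/\gcd(a,b)=\frac{x^2/6}{x/6}=x$.

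Second, the failure of lcm splitting, symmetric in $(a,b)$ and $(b,a)$. Suppose $cd\sim x$; normalise to $cd=x$. Comparing degrees in $\mathbb{Q}[x]$, one factor has degree $0$, and the degree-$0$ elements of $S$ are integers. So the factorization is $\{c,d\}=\{n,\frac{x}{n}\}$ with $n\in\mathbb{Z}\setminus\{0\}$.
\begin{itemize}
\item Since $\frac{x}{n}=n\cdot\frac{x}{n^2}$, the integer factor divides the other factor. By (2), $\gcd(c,d)\sim n$.
\item The splitting condition then forces $n=\pm1$.
\item Hence one of $c,d$ is $\pm x$. But $x$ divides neither $\frac{x}{2}$ nor $\frac{x}{3}$, since $\frac{1}{2},\frac{1}{3}\notin S$. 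This contradicts $c\mid a$ or $d\mid b$, whichever applies.
\end{itemize}
Because this argument never uses which of the two elements is listed first, it covers both orderings at once.

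Third, the gf and adequate conditions. For $(a,b)$ I would take $(c,d)=(1,\frac{x}{2})$. Then $a=cd$, and $\gcd(1,b)\sim1$ by (6) and (9). Moreover $b^2=\frac{x^2}{9}=\frac{2x}{9}\cdot\frac{x}{2}$ with $\frac{2x}{9}\in S$, so $b\in\sqrt{(d)}$. Lem.~\ref{lem:gfImpliesAdequateForPairs} then upgrades gf to adequate for this pair. For $(b,a)$ I would take $(1,\frac{x}{3})$ and use $a^2=\frac{x^2}{4}=\frac{3x}{4}\cdot\frac{x}{3}$ in the same way.

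Finally, the ``in particular'' clause. $S$ is a B\'ezout domain by \cite[Thm. 4.6 (6)]{And00} and an elementary divisor domain by \cite[Ex. 3.3]{Roi13}. It fails lcm splitting by the second step. By Thm.~\ref{thm:adequateImpliesLCMSplitting}, an adequate B\'ezout domain is lcm splitting, so $S$ cannot be adequate.

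The only step with real content is the degree-and-units argument. The key points are that every factorization of $x$ in $S$ has the form $n\cdot\frac{x}{n}$, and that the resulting divisibility forces $\gcd\sim n$. Everything else is a direct check.
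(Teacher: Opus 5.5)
Your proposal is correct and follows the paper's argument essentially verbatim. It uses the degree-and-units analysis showing every factorization of $x$ is $n\cdot\frac{x}{n}$; you treat both placements of the integer factor directly rather than invoking the $2\leftrightarrow 3$ symmetry. It then takes the gf witnesses $(1,\frac{x}{2})$ and $(1,\frac{x}{3})$, upgrades them via Lem.~\ref{lem:gfImpliesAdequateForPairs}, and gets non-adequacy from Thm.~\ref{thm:adequateImpliesLCMSplitting}.

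There is one harmless slip in your gcd check. Every nonzero integer is a common divisor of $\frac{x}{2}$ and $\frac{x}{3}$, not just $\pm1$ (e.g.\ $\frac{x}{2}=5\cdot\frac{x}{10}$). Similarly, the cofactor $h$ of a degree-one divisor need only be a nonzero integer, not a unit. Since every nonzero integer divides $\frac{x}{6}$, the conclusion $\gcd(a,b)\sim\frac{x}{6}$ is unaffected.
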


\begin{proof}
All claims except $S$ being non-adequate were proven or referenced in the discussion above. $S$ being non-adequate
follows from the failure of lcm splitting and Thm.~\ref{thm:adequateImpliesLCMSplitting}.
\end{proof}

\bigskip
\section{Final remarks and open questions}
\label{sec:final}

I give further observations and state some open questions.

Let $R$ be a B\'{e}zout domain that fails lcm splitting. The ring $R[x]$ is a GCD domain that is not B\'{e}zout (for a nontrivial
principal ideal $(r)\subset R$, the ideal $(r,x)\subset R[x]$ is not principal), and the same pair
$(a,b)$ that witnessed failure of lcm splitting before again shows failure of lcm splitting in $R[x]$. On the other
hand, let $k$ be any field. Then the polynomial ring $P=k[(x_i)_{i=1}^{n}]$ for $n\geq 2$ is a UFD, hence a GCD domain,
non-B\'{e}zout (e.g. $(x_1,x_2)$ is not principal), but, as a UFD, $P$ satisfies lcm splitting. Thus non-B\'{e}zout GCD
domains may or may not be lcm splitting.

We have seen that adequate domains are always lcm splitting. I have given two non-adequate, elementary divisor domains
that fail lcm splitting. I leave it open whether there are non-adequate, elementary divisor domains that satisfy lcm
splitting. Recently, an example of a B\'{e}zout domain that is not an elementary divisor domain was announced. Is the
example of H\"{a}gg and M\"{o}rtberg~\cite{HM26} lcm splitting? Does failure of the elementary divisor property more
generally imply failure of lcm splitting? Does lcm splitting imply adequate? Note that lcm splitting is symmetric in
$(a,b)$, while adequate (and gf) are not. It would be surprising to me if lcm splitting implied adequate, although I do
not have a good idea what an example of an lcm splitting, non-adequate B\'{e}zout domain should look like.

One could investigate lcm splitting in GCD rings with zero divisors. Gillman and Henriksen~\cite{GH56A} already gave
examples of K-Hermite GCD rings that fail the elementary divisor property.

There are other properties that might be relevant to lcm splitting. How does lcm splitting relate to them?
Anderson et al.~\cite{ADZ04} consider \textbf{almost splitting sets} and refer to work on \textbf{t-splitting
  sets}, which, superficially, seem to be connected to lcm splitting. Anderson~\cite[Sec. 5]{And00}
mentions that more general classes of rings can be \textbf{inertly embedded} into B\'{e}zout domains -- cf. also
Cohn~\cite[Thm. 3.3]{Coh68}. Since inert embeddings leave divisibility properties and gcds mostly unchanged, it is
natural to ask how lcm splitting behaves with respect to such embeddings.

There is a correspondence between B\'{e}zout domains and lattice ordered (abelian) groups. Is there an 
analog of lcm splitting for lattice ordered groups? Some research brought up the \textbf{(strongly) projectable lattice
  ordered groups}, which might be worth investigating further.

Unrelated to lcm splitting, but still interesting: Are there adequate domains that are not gf? Van den Dries and 
Macintyre~\cite[Sec. 2.10]{vDM90} used essentially only topology for their counterexample. Could one reformulate gf and/or
adequate in purely topological terms of spectral spaces/trees? If so, this might open a route to resolve the relation
between gf and adequate.

I exhibited a pair of elements $(\tilde{a},\tilde{b})$ in the ring $R_V$ that violates gf and adequate but is lcm
splitting, and two pairs $(a,b)$, one in $R_V$ and the other in $S$, that violate lcm splitting although both $(a,b)$
and $(b,a)$ satisfy gf and adequate. In general, what are the possible combinations of properties that (pairs of)
elements can have in B\'{e}zout domains? It is not hard to see that as soon as $1\in\{a,b\}$, gf, adequate and lcm
splitting can be satisfied by suitable and obvious choices of $(c,d)$. Therefore, it is impossible for all pairs to
behave badly. See also Rem.~\ref{rem:firstObsLCMSplitting} for the role of $0$ in lcm splitting.

Is it possible to have a pair of elements that violates all three properties, gf, adequate and lcm splitting? One idea is
to take the disjoint union $V''$ of two copies of $V$. The conditions of Prop.~\ref{prop:Fischer79_3.3} are also
satisfied by $V''$, so there is a B\'{e}zout domain $R_{V''}$ with $V''$ as its maximal spectrum. An associated spectral
tree $M''$ should consist of two disjoint copies of $M$ and a single generic point $(0)$. One should be able to set up a
pair $(\hat{a},\hat{b})$ that behaves like $(\tilde{a},\tilde{b})$ on one copy of $M$ in $M''$ and like $(a,b)$ on the
other copy. This should provide a B\'{e}zout domain $R_{V''}$ with a pair $(\hat{a},\hat{b})$ that violates all three
properties, although I have not checked all relevant details. In order to get a pair $(\hat{a},\hat{b})$ such that it and
$(\hat{b},\hat{a})$ violate all three properties, one could try an $R_{V'''}$ associated with three disjoint copies of
$M$.

Finding a B\'{e}zout domain with a pair that violates gf but satisfies adequate could also turn out to be helpful for
the question whether there are domains that are adequate but not gf.

\textbf{Closing thoughts.} It is quite remarkable that lcm splitting seems to be new in commutative algebra, as it
appears to be such an obvious and na\"{i}ve property. At least, I was not able to find substantial literature in
commutative algebra discussing it. I would greatly appreciate any pointers from the community to work treating
this property, possibly under a different name.

\bigskip
\section{Acknowledgment and use of AI}
\label{sec:ackChatbots}

I thank Ash Seifert for the question that inspired lcm splitting, and Andrew Kresch and Elisa Gorla for help with publishing.

Some of the results were developed in dialog with LLMs: the free versions, current at the time of writing, of Anthropic's Sonnet 5
(\url{https://claude.ai/share/21cb9a53-c0db-4ca9-af3e-746fb22aad83}) and OpenAI's ChatGPT
(\url{https://chatgpt.com/share/6ac74f32-08e0-83eb-ad30-f7940cf31be5}). I take full responsibility for the content of
this paper.



\bigskip

\bigskip
\newpage
\appendix
\section{Proofs for basic identities and properties in GCD domains}
\label{app:proofsGCDIdentities}

Here I give proofs of the claims in Prop.~\ref{prop:gcdIdentities}. For convenience, I restate the proposition.

\begin{prop}\label{prop:gcdIdentities}
Let $R$ be a GCD domain and $a,b,c\in R$. Then:
\begin{enumerate}
\item \textbf{definitional divisibility}: $c\mid a \wedge c\mid b\Rightarrow c\mid\gcd(a,b), \quad$ and 
$a\mid c \wedge b\mid c\Rightarrow \lcm(a,b)\mid c$,
\item \textbf{mutual divisibility}: $a\mid b \wedge b\mid a\Rightarrow a\sim b,\quad a\mid b\Rightarrow (\gcd(a,b)\sim a\wedge\lcm(a,b)\sim b)$,
\item \textbf{compatibility with $\sim$}: $\left(a\sim c \wedge b\sim d\right)\Rightarrow \left(\gcd(a,b)\sim \gcd(c,d)
  \wedge \lcm(a,b)\sim \lcm(c,d)\right)$,
\item \textbf{zero behavior}: $\left(a\sim 0 \Leftrightarrow a=0\right), \quad \gcd(a,0)\sim a, \quad$ 
$\left(0\sim \gcd(a,b)\Leftrightarrow (a=0\wedge b=0)\right), \quad$\\ $\lcm(a,0)=0, \quad$ and 
$\left(0\sim \lcm(a,b)\Leftrightarrow (a=0\vee b=0)\right)$, 
\item \textbf{product formula}: $ab\sim \gcd(a,b)\lcm(a,b)$,
\item \textbf{commutativity}: $\gcd(a,b)\sim \gcd(b,a), \quad$ and $\lcm(a,b)\sim \lcm(b,a)$,
\item \textbf{associativity}: $\gcd(a,\gcd(b,c))\sim \gcd(\gcd(a,b),c), \quad$ and $\lcm(a,\lcm(b,c))\sim \lcm(\lcm(a,b),c)$,
\item \textbf{idempotence}: $\gcd(a,a)\sim a, \quad$ and $\lcm(a,a)\sim a$,
\item \textbf{unity}: $c\mid 1\Rightarrow c\sim 1,\quad$ $\gcd(a,b)\mid 1\Rightarrow \gcd(a,b)\sim 1,\quad$ 
$1\not\sim a\mid b\Rightarrow 1\not\sim b,\quad$  $\gcd(a,1)\sim 1,\ \ $ and $\lcm(a,1)\sim a$,
\item \textbf{common factor}: $\gcd(ca,cb)\sim c\gcd(a,b), \quad$ and $\lcm(ca,cb)\sim c\lcm(a,b)$,
\item \textbf{additional factor}: $\gcd(a,b)\mid\gcd(a,bc), \quad$ and $\lcm(a,b)\mid\lcm(a,bc)$,
\item \textbf{coprime elements}: $\gcd(a,b)\sim 1 \wedge a\mid bc\Rightarrow a\mid c, \quad$ 
$\gcd(a,b)\sim 1 \Rightarrow \gcd(a,bc)\sim \gcd(a,c), \quad$ $\gcd(a,b)\sim 1 \Rightarrow \lcm(a,bc)\sim b\lcm(a,c), \quad$ and 
$(a\mid c\wedge b\mid c\wedge \gcd(a,b)\sim 1)\Rightarrow ab\mid c$,
\item \textbf{division by common factor (assume $0\neq c\mid a$ and $c\mid b$, which implies that all following fractions
in $\Frac(R)$ are actually in $R$)}: $\gcd\left(\frac{a}{c},\frac{b}{c}\right) \sim  \frac{\gcd(a,b)}{c}, \quad$ and 
$\lcm\left(\frac{a}{c},\frac{b}{c}\right) \sim  \frac{\lcm(a,b)}{c}$ -- of particular interest will be the special case $c=\gcd(a,b)$,
\item \textbf{powers ($n\in\mathbb{N}, c^{0_{\mathbb{N}}}=1_R$)}: $\gcd(a^n,b^n)\sim \gcd(a,b)^n, \quad$ and $\lcm(a^n,b^n)\sim \lcm(a,b)^n$,
\item \textbf{absorption}: $\gcd(a,\lcm(a,b))\sim a, \quad$ and $\lcm(a,\gcd(a,b))\sim a$,
\item \textbf{distributivity}: $\gcd(a,\lcm(b,c))\sim \lcm(\gcd(a,b),\gcd(a,c)), \quad$ and\\ $\lcm(a,\gcd(b,c))\sim \gcd(\lcm(a,b),\lcm(a,c))$.
\end{enumerate}
\end{prop}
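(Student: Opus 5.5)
The plan is to prove the sixteen items of Prop.~\ref{prop:gcdIdentities} in roughly the order listed. Each item will rest only on the definitions of gcd and lcm, on cancellation in the domain $R$, and on the items already established. I will pass to the fraction field $\Frac(R)$ whenever a quotient is convenient.

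\textbf{Purely definitional items.} Items (1), (2), (3), (4), (6), (8) and (9) follow directly from the definitions.
\begin{itemize}
\item For (2), mutual divisibility gives $a=ub$ and $b=va$. Hence $a=uva$, and either $a=0=b$ or $uv=1$ by cancellation.
\item Item (3) follows because associates have the same divisors and the same multiples.
\item Item (7) follows because $\gcd(a,\gcd(b,c))$ and $\gcd(\gcd(a,b),c)$ are both characterized as the greatest common divisor of $a,b,c$. Applying (1) in both directions and then (2) gives associateness.
\end{itemize}

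\textbf{The common factor rule (10).} This is the first genuinely structural step. The key claim is $\gcd(ca,cb)\sim c\gcd(a,b)$ for $c\neq0$. Clearly $c\gcd(a,b)$ divides both $ca$ and $cb$. For the converse, write $h=\gcd(ca,cb)$; then $c\mid h$, so $h=ch'$. Dividing by $c$ shows that $h'$ divides $a$ and $b$, hence $h'\mid\gcd(a,b)$. This uses only cancellation.

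\textbf{The product formula (5).} Let $g=\gcd(a,b)$ with $a,b\neq0$, and set $l'=ab/g\in R$. Then $l'$ is a common multiple of $a$ and $b$. Let $m$ be any common multiple.
\begin{itemize}
\item By (10), $\gcd(mb,ma)\sim m g$.
\item Since $ab$ divides both $mb$ and $ma$, we get $ab\mid mg$.
\item Cancelling $g$ gives $l'\mid m$.
\end{itemize}
So $l'$ is an lcm, and this step simultaneously shows that lcms exist. The lcm half of (10) then follows from (5) combined with the gcd half.

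\textbf{Identities derived from (5) and (10).} The remaining items are derived as follows.
\begin{itemize}
\item \textbf{(11):} any divisor of $a$ and $b$ divides $a$ and $bc$. Similarly, $\lcm(a,bc)$ is a common multiple of $a$ and $b$.
\item \textbf{(12), first part:} from $\gcd(a,b)\sim1$ and (10), $\gcd(ac,bc)\sim c$. Since $a$ divides both $ac$ and $bc$, it divides $c$.
\item \textbf{(12), remaining parts:} the other coprimality statements follow by similar manipulations, combining (5) and (10). The last one follows from the first applied to $b\mid c=b c'$ with $a\mid bc'$.
\item \textbf{(13):} this is (10) with $a/c$ and $b/c$ in place of $a$ and $b$.
\item \textbf{(15), absorption:} $a$ is a common divisor of $a$ and $\lcm(a,b)$, and any common divisor divides $a$. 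The lcm version is dual.
\end{itemize}

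\textbf{Main obstacles: powers (14) and distributivity (16).} These do not follow from pure lattice manipulation without some care.
\begin{itemize}
\item \textbf{(14):} reduce via (10) to the coprime case $\gcd(a,b)\sim1$. Then iterating (12) shows $\gcd(a^n,b)\sim1$ and subsequently $\gcd(a^n,b^n)\sim1$.
\item \textbf{(16):} in GCD domains the divisibility monoid modulo units is a lattice-ordered group, and such lattices are distributive. I would first try a direct argument. Set $g=\gcd(a,b)$ and $h=\gcd(a,c)$; the inclusion $\lcm(g,h)\mid\gcd(a,\lcm(b,c))$ is immediate. For the reverse divisibility, divide out common factors using (13) and (10) to reduce to coprimality statements handled by (12).
\end{itemize}
If the direct route becomes too messy, the fallback for (16) is to cite Anderson~\cite{And00} or Cohn~\cite{Coh68}, where distributivity is standard via the lattice-ordered group of divisibility. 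I expect (16) to be the hardest item.
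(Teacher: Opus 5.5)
Your outline is correct, and it differs from the paper mainly in how (5) and (10) are obtained. The paper proves the gcd half of (10) by first establishing the special case $\gcd(a/g,b/g)\sim 1$ of (13), gets the lcm half ``by a dual argument'', and cites Anderson's v-gcd theorem for the product formula (5). It cites the existence of lcms in the same way in Section 2. You proceed in three steps:
\begin{itemize}
\item you prove the gcd half of (10) directly: write $\gcd(ca,cb)=ch'$ and cancel;
\item you derive (5) from it via $ab\mid\gcd(mb,ma)\sim m\gcd(a,b)$ for an arbitrary common multiple $m$;
\item you get the lcm half of (10) from (5).
\end{itemize}
This is non-circular and makes the appendix self-contained. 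It also proves along the way that $ab/\gcd(a,b)$ is an lcm, i.e.\ that lcms exist. Your treatment of (13), as (10) applied to $a/c$ and $b/c$, is also shorter than the paper's computation.

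The one vague spot before (16) is the second part of (12). You can make it concrete as $\gcd(a,bc)\sim\gcd(\gcd(a,ac),bc)\sim\gcd(a,\gcd(ac,bc))\sim\gcd(a,c\gcd(a,b))\sim\gcd(a,c)$, using (2), (7), (10). Alternatively, follow the paper and apply the first part of (12) to $G=\gcd(a,bc)$, which satisfies $\gcd(G,b)\sim1$ and $G\mid bc$.

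Item (16), however, is still only a plan. The divisibility $\gcd(a,\lcm(b,c))\mid\lcm(\gcd(a,b),\gcd(a,c))$ is the whole content, and ``reduce to coprimality statements handled by (12)'' does not say what closes it. The paper factors out $k=\gcd(a,\gcd(b,c))$ and then proves an auxiliary lemma: $x\mid yz$ together with $\gcd(\gcd(x,y),\gcd(x,z))\sim1$ implies $x\sim\gcd(x,y)\gcd(x,z)$.

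Along your own lines there is a shorter finish:
\begin{itemize}
\item Scale out $k$ using both halves of (10) and (13), so that now $\gcd(a,\gcd(b,c))\sim1$.
\item With $g=\gcd(a,b)$ and $h=\gcd(a,c)$, you get $\gcd(g,h)\sim\gcd(a,\gcd(b,c))\sim1$ by (6), (7), (8), hence $\lcm(g,h)\sim gh$ by (5).
\item Repeated use of (10) gives $gh\sim\gcd(\gcd(a^2,ac),\gcd(ab,bc))$, which is divisible by $\gcd(a,bc)$.
\item Finally, $\gcd(a,\lcm(b,c))\mid\gcd(a,bc)$ because $\lcm(b,c)\mid bc$.
\end{itemize}

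You also need to say where the dual law $\lcm(a,\gcd(b,c))\sim\gcd(\lcm(a,b),\lcm(a,c))$ comes from. The paper derives it from the first law using (7) and absorption (15). Your fallback, citing distributivity of the lattice of an $\ell$-group applied to the group of divisibility, is legitimate. But it imports more theory than anything else in your proposal, whereas the finish above uses only items you have already proved.
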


\begin{proof}
\begin{enumerate}
\item This is really in the definition of gcd resp. lcm, so trivial. 
\item From $a\mid b \wedge b\mid a$ we have $a',b' \in R$ such that $a'a=b, b'b=a$.\\
Assume first that (at least) one of $a, b$ is zero, wlog $a=0$. Then, by the general behavior of zero in rings,
$b=a'a=a'\cdot 0=0$ and $b=0=a$, thus $a\sim b$.\\
Otherwise $0\neq a=b'b=b'a'a$ resp. $1\cdot a=b'a'a$. By cancellation in domains, we have $1=b'a'$. Thus $b',a'\in
R^{\times}$ are units, and $a\sim b$.\\
Since $a\mid a$ is trivial, the assumption $a\mid b$ yields, by part (1), $a\mid g=\gcd(a,b)$. But by the other part of the
definition of a gcd, $g$ has to divide each of the two arguments of the gcd as well, i.e. $g\mid a$. By the first part of
mutual divisibility, $\gcd(a,b)=g\sim a$. The proof for lcm follows the same lines.
\item Set $g_1=\gcd(a,b), g_2=\gcd(c,d)$. By definition of $\sim$ there are $a',b'\in R^{\times}$ with $a'a=c, b'b=d$. Thus
$g_1\mid a\mid c$ and $g_1\mid b\mid d$. By part (1), $g_1\mid\gcd(c,d)=g_2$. By symmetry of $\sim$, we also have $g_2\mid
g_1$ and thus by part (2) $g_1\sim g_2$. The proof for lcm follows mutatis mutandis.
\item We use that in any ring $d\cdot 0=0$ for any $d\in R$.\\
$a\sim 0$ implies the existence of $d\in R^{\times}$ with $da=0$. In particular, as a unit in an integral domain,
  $d\neq0$. Since integral domains have no nontrivial zero divisors, we deduce $a=0$ and finish
  ``$\Rightarrow$''. ``$\Leftarrow$'' follows from the assumption and since ``$\sim$'' is reflexive as an equivalence
  relation.\\
It is straightforward to see $\gcd(0,0)=0$, so we may assume $a\neq 0$.\\
We have $g=\gcd(a,0)\mid a$ by definition, but $a=1\cdot a$ and $0=0\cdot a$ show that $a$ is also a common divisor of the
 pair $(a,0)$; hence $a\mid g$ by definition. By mutual divisibility from (2), we get $\gcd(a,0)=g\sim a$.\\  
For the third part, ``$\Leftarrow$'' follows from $\gcd(0,0)=0$. For ``$\Rightarrow$'', we argue by
contraposition. Assume that wlog $a\neq 0$. Since by definition $g=\gcd(a,b)$ is in particular a divisor of
$a$, there is some $g_1\in R$ such that $g_1g=a\neq 0$. By the first part, $g\sim 0$ implies $g=0$. We had
$0=g_1\cdot 0=g_1g=a\neq 0$, a contradiction. Thus $g=\gcd(a,b)\not\sim 0$ demonstrating the third property.\\
Again, in any ring $d\cdot 0=0$ for any $d\in R$, thus the only multiple of $0$ is $0$. Hence $0$ is the only
candidate for an lcm with $0$, and since $0\cdot a=0$ also proves that $0$ is a multiple of $a$, we establish $0$ as the
only common multiple of $a$ and $0$, hence the claimed $\lcm(a,0)=0$.\\
From the previously proven identity and by commutativity (proven below without recourse to this part), we immediately get
``$\Leftarrow$''. For ``$\Rightarrow$'', assume that $a\neq0\neq b$. Then $0\neq ab$ (integral domain) is a common
multiple of $a$ and $b$, thus $l=\lcm(a,b)\mid ab\neq 0$, i.e. there is an $l_1$ with $l_1l=ab\neq 0$. As was already
argued several times, this can only happen if $l\neq 0$; hence, by the first part, $l\not\sim 0$. By contraposition,
$l\sim0$ needs to imply the negation of $a\neq0\neq b$, which yields the fifth and last claim.
\item A proof of a generalized product formula valid for domains that are not necessarily GCD, with the $\gcd$ replaced
  by the \textbf{v-gcd}, may be found in \cite[Thm. 2.1 (2)]{And00}. Since for GCD domains both versions agree, i.e. v-gcd
  is the $\gcd$, this proves the product formula in the above form as a corollary.
\item By the definition of the gcd (logical symmetry in $a$ and $b$), we get $\gcd(a,b)\mid\gcd(b,a)$ and
  $\gcd(b,a)\mid\gcd(a,b)$. Part (2) finishes the proof for gcds. Lcms are dealt with similarly.
\item Set $g_1=\gcd(a,\gcd(b,c)), g_2=\gcd(\gcd(a,b),c)$, and assume that $a,b,c$ are nonzero. By part (4), all gcds
  involved and in particular $g_1, g_2$ are nonzero as well. We first show that
  $g_1\mid g_2$. By definition we have $g_1\mid a, g_1\mid\gcd(b,c)$, thus $g_1$ divides all three, ``$g_1\mid a,b,c$''
  for short. Therefore $g_1\mid\gcd(a,b)$ by definition together with $g_1\mid c$, which in turn by definition yields
  $g_1\mid\gcd(\gcd(a,b),c)=g_2$. By symmetry, $g_2\mid g_1$ as well, and mutual divisibility in part (2) yields $g_1\sim g_2$.\\
If one of $a,b,c$ is zero, the claim follows by a straightforward computation using part (4). E.g., for $a=0$, we
have $\gcd(a,\gcd(b,c))=\gcd(0,\gcd(b,c))\stackrel{\text{\tiny (6), (4)}}{\sim}\gcd(b,c)$ and
$\gcd(\gcd(a,b),c)=\gcd(\gcd(0,b),c)\stackrel{\text{\tiny (6), (4), (3)}}{\sim}\gcd(b,c)$. The claim follows by
transitivity of $\sim$. The cases $b=0$ and $c=0$ follow in the same way.\\
The property for lcm is proved mutatis mutandis.
\item By the two parts of the definition, $\gcd(a,a)\mid a$ and $a\mid \gcd(a,a)$. Mutual divisibility in part (2) implies
  association. The same holds for lcms.
\item From $c\mid 1$, we get some $c'\in R$ with $c'c=1$. This means that $c', c$ are units and associates of $1$,
  in particular $c\sim 1$. Replacing $c$ by $\gcd(a,b)$ yields the second statement.\\
By assumption, there is an $a'\in R$ such that $a'a=b$. Assume, to the contrary, that $1\sim b$. Then there is some $b'\in
R^{\times}$ with $b'b=1$, i.e. $a\mid a'a=b\mid b'b = 1$. By the first part, this yields the contradiction $a\sim 1$ to
the given assumption. Hence $b\not\sim1$, the third claim.\\
By definition, $g=\gcd(a,1)$ divides its constituents, so in particular $g\mid 1$. By the previously proven implication,
this yields $g\sim1$ and thus the claim.\\
Clearly $a$ is a multiple of $a$ (via $1\cdot a$) and of $1$ (via $a\cdot 1$), i.e., a common multiple, and thus
$l=\lcm(a,1)\mid a$. On the other hand, $a$ needs to divide $\lcm(a,1)=l$ by definition of lcm. By part (2), the combination
$l\mid a$ and $a\mid l$ yields $l\sim a$, the last claim of (9).
\item Again, we prove the gcd identity. The lcm identity follows by a dual argument. Again, the case of some $a,b,c$
  being zero can be settled by direct calculations in the spirit of previous parts, so we assume all of them to be nonzero.\\
We first prove a special case of part (13). Set $g=\gcd(a,b)$; we show that $\gcd\left(\frac{a}{g},\frac{b}{g}\right)\sim
1$. First, since $g$ divides both $a$ and $b$, there are $a', b'$ with $a'g=a, b'g=b$, and we have $\frac{a}{g}=a',
\frac{b}{g}=b'\in R\subseteq\Frac(R)$. Now set $g'=\gcd(a',b')$. Then $g'$ divides both $a', b'$, and since divisibility
is multiplicative, $g'g$ divides both $a'g=a, b'g=b$. Part (1) yields $g'g\mid \gcd(a,b)=g$, so there is some $g''$ with
$g''\cdot g'g=g=1\cdot g$, implying $g'$ and $g''$ are units by canceling in domains. This shows $\gcd(a',b')=g'\sim1$
as claimed.\\
With the previous notation, $a=a'g, b=b'g$. Therefore $ca=ca'g, cb=cb'g$ and thus $cg$ is a common divisor of both $ca$
and $cb$, implying $cg\mid\gcd(ca,cb)=G$. Hence there is some $m$ with $mcg=G$. By definition, $G\mid ca$ and $G\mid
cb$, i.e. for some $d_a,d_b\in R\setminus\{0\}$ we have $d_amcg=d_aG=ca=ca'g, d_bmcg=d_bG=cb=cb'g$, and canceling $cg$
on both sides yields $d_am=a', d_bm=b'$. So $m$ divides both $a',b'$; therefore $m\mid\gcd(a',b')\sim1$. By part
(9), $m\sim 1$ is a unit, i.e. $cg=c\gcd(a,b)$ and $G=\gcd(ca,cb)$ are associates as claimed.
\item Set $g=\gcd(a,b)$, so $g\mid a, g\mid b$, which implies $g\mid a, g\mid bc$, which by part (1) yields
  $g\mid\gcd(a,bc)$ which is the first claim.\\
Let $l=\lcm(a,bc)$, thus $a\mid l, bc\mid l$, which implies $a\mid l, b\mid l$, which by part (1) yields
  $\lcm(a,b)\mid l$ which is the other claim.
\item 
By assumption, we have some $a'\in R$ with $a'a=bc$.
By previous parts, we have $a\stackrel{\text{\tiny (8)}}{\sim}\gcd(a,a)\stackrel{\text{\tiny (11)}}{\mid}\gcd(a,a'a)=
\gcd(a,cb)\stackrel{\text{\tiny (11), (6)}}{\mid}\gcd(ca,cb)\stackrel{\text{\tiny (10)}}{\sim}c\gcd(a,b)\sim c\cdot 1 = c$.\\
We have $g=\gcd(a,c)\stackrel{\text{\tiny (11)}}{\mid} \gcd(a,bc)=G$. 
By definition, $G=\gcd(a,bc)$ satisfies $G\mid a$ and $G\mid bc$. Let $d=\gcd(G,b)$. Then we have $d\mid G\mid a,
d\mid b$, i.e. $d\mid\gcd(a,b)\sim 1$, which by part (9) yields $\gcd(G,b)=d\sim1$. The first
part applied to $(a,b,c)$ replaced by $(G,b,c)$ (conditions $\gcd(G,b)\sim1$ and $G\mid bc$ are satisfied) yields $G\mid
c$. Combine this with $G\mid a$ to get $G\mid\gcd(a,c)=g$. Combining $g\mid G$ and $G\mid g$ by part (2) yields $g\sim G$, i.e. the claim.\\ 
For the third part of (12), we use the previous part of (12) just proven and the product formula (5) (treating zero cases separately,
as before).
\[
\lcm(a,bc)\stackrel{\text{\tiny (5)}}{\sim}
\frac{abc}{\gcd(a,bc)}\stackrel{\text{\tiny (12)}}{\sim}
\frac{abc}{\gcd(a,c)}=
b\cdot \frac{ac}{\gcd(a,c)}\stackrel{\text{\tiny (5)}}{\sim}
b\cdot \lcm(a,c).
\]
There are $a',b'\in R$ with $a'a=c=b'b$, i.e. $a\mid b'b$. The first statement of (12) together with the assumption
$\gcd(a,b)\sim1$ yields $a\mid b'$. Since divisibility is multiplicative, we have $ab\mid b'b=c$, which is the claim.
\item We already proved a special case of part (13) in (10) for $c=\gcd(a,b)$. Note that an arbitrary common divisor $c$
  of $a,b$ also satisfies $c\mid\gcd(a,b)=g$, i.e. there are $a', b', c'\in R$ with $a'g=a, b'g=b, c'c=g$. Then we
  calculate
\begin{gather*}
\gcd\left(\frac{a}{c},\frac{b}{c}\right) = 
\gcd\left(\frac{a'g}{c},\frac{b'g}{c}\right) = 
\gcd\left(a'\frac{g}{c},b'\frac{g}{c}\right) = 
\gcd\left(a'c',b'c'\right) \stackrel{\text{\tiny (10)}}{\sim} \\
c'\cdot\gcd\left(a',b'\right) =
\frac{g}{c}\cdot\gcd\left(a',b'\right) =
\frac{g\cdot\gcd\left(a',b'\right)}{c} \stackrel{\text{\tiny (10)}}{\sim}
\frac{\gcd\left(a'g,b'g\right)}{c} =
\frac{\gcd(a,b)}{c}.
\end{gather*}
Since $c\neq 0$, all the fractions are actually defined in $R\subseteq\Frac(R)$.\\
As before, treating zero cases separately and taking care that all fractions
stay in $R$, we obtain the other assertion from the
product formula (5):
\[
\lcm\left(\frac{a}{c},\frac{b}{c}\right) \stackrel{\text{\tiny (5)}}{\sim} 
\frac{\frac{a}{c}\cdot\frac{b}{c}}{\gcd\left(\frac{a}{c},\frac{b}{c}\right)}\sim
\frac{\frac{a}{c}\cdot\frac{b}{c}}{\frac{\gcd(a,b)}{c}}=
\frac{ab}{c\gcd(a,b)} \stackrel{\text{\tiny (5)}}{\sim} 
\frac{\lcm(a,b)}{c}.
\]
\item Treat the special cases of $a,b\in R$ and $n\in\mathbb{N}$ not all being nonzero separately. The convention
  $c^0=1$ works out in all cases, even for $0^0=1$.\\
Let $g=\gcd(a,b)$. Then $a=a'g, b=b'g$ for some $a',b' \in R$. Hence $a^n=a'^ng^n, b^n=b'^ng^n$ and therefore
$g^n\mid\gcd(a^n,b^n)$, meaning $\frac{\gcd(a^n,b^n)}{g^n}$ is well-defined in $R$.\\
By the special case of part (13) proven in (10), we have $\gcd\left(\frac{a}{g},\frac{b}{g}\right)\sim1$. Note that we
can assume $n\geq 1$. Inductively applying the second identity of part (12), we get 
\[\gcd\left(\frac{a}{g},\frac{b^n}{g^n}\right)=
\gcd\left(\frac{a}{g},\left(\frac{b}{g}\right)^n\right)\stackrel{\text{\tiny (12)}}{\sim}
\gcd\left(\frac{a}{g},\left(\frac{b}{g}\right)^{n-1}\right)\stackrel{\text{\tiny (12)}}{\sim}
\ldots\stackrel{\text{\tiny (12)}}{\sim}
\gcd\left(\frac{a}{g},\left(\frac{b}{g}\right)^{1}\right)\sim1.
\] 
Similarly, using the association just proven and (6), we get
\[\gcd\left(\frac{a^n}{g^n},\frac{b^n}{g^n}\right)=
\gcd\left(\left(\frac{a}{g}\right)^n,\frac{b^n}{g^n}\right)\stackrel{\text{\tiny (12)}}{\sim}
\gcd\left(\left(\frac{a}{g}\right)^{n-1},\frac{b^n}{g^n}\right)\stackrel{\text{\tiny (12)}}{\sim}
\ldots\stackrel{\text{\tiny (12)}}{\sim}
\gcd\left(\left(\frac{a}{g}\right)^{1},\frac{b^n}{g^n}\right)\sim1.
\] 
Then we get
\[
\frac{\gcd(a^n,b^n)}{g^n}\stackrel{\text{\tiny (13)}}{\sim}
\gcd\left(\frac{a^n}{g^n},\frac{b^n}{g^n}\right)\sim1.
\]
Multiplying both sides by $g^n$ and canceling yields the first identity. The lcm identity follows by applying the product 
formula (5), treating zero cases separately, as seen before.
\item Set $l=\lcm(a,b), g=\gcd(a,l)$. Since $a$ is an argument of the outer gcd, we have by definition $g\mid a$. We also
  have $a\mid a$ and, by definition of lcm, $a\mid l$, which, by (1), combines to $a\mid g$. Mutual divisibility (2) yields the
  claim.\\ 
The second absorption statement is proven analogously.
\item First, we treat the zero cases explicitly: 
\begin{gather*}
a=0: \gcd(0,\lcm(b,c))\stackrel{\text{\tiny (4)}}{\sim}
\lcm(b,c)\stackrel{\text{\tiny (3), (4), (6) twice}}{\sim}
\lcm(\gcd(0,b),\gcd(0,c)),\\
b=0: \gcd(a,\lcm(0,c))\stackrel{\text{\tiny (4), (6)}}{=}
\gcd(a,0)\stackrel{\text{\tiny (4)}}{\sim}
a\stackrel{\text{\tiny (15)}}{\sim}
\lcm(a,\gcd(a,c))\stackrel{\text{\tiny (3), (4)}}{\sim}
\lcm(\gcd(a,0),\gcd(a,c)),\\
c=0: \gcd(a,\lcm(b,0))\stackrel{\text{\tiny (4)}}{=}
\gcd(a,0)\stackrel{\text{\tiny (4)}}{\sim}
a\stackrel{\text{\tiny (6), (15)}}{\sim}
\lcm(\gcd(a,b),a)\stackrel{\text{\tiny (3), (4)}}{\sim}
\lcm(\gcd(a,b),\gcd(a,0)).
\end{gather*}
Next, assume $g=\gcd(a,\gcd(b,c))\sim1$ and set $g_b=\gcd(a,b), g_c=\gcd(a,c), LHS=\gcd(a,\lcm(b,c)), RHS=\lcm(g_b,g_c)$.\\ 
We first show $RHS\mid LHS$. We have $g_b\mid a, g_b\mid b\mid\lcm(b,c)$, thus $g_b\mid LHS$. From $g_c\mid a, g_c\mid
c\mid\lcm(b,c)$ follows $g_c\mid LHS$. I.e., $LHS$ is a common multiple of $g_b, g_c$, and by part (1), we have $RHS=\lcm(g_b,g_c)\mid LHS$.\\
We prove an intermediate result, which we will use shortly. For $x,y,z\in R\setminus\{0\}$, we have $(x\mid
yz\wedge\gcd(\gcd(x,y),\gcd(x,z))\sim1) \Rightarrow x\sim\gcd(x,y)\gcd(x,z)$.\\
Set $g_y=\gcd(x,y), g_z=\gcd(x,z)$. By the definition of gcds, we have $g_y\mid x$ and $g_z\mid x$, and by assumption
$\gcd(g_y,g_z)\sim1$, so the last part of (12) implies $g_yg_z\mid x$, i.e. there is an $x'\in R\setminus\{0\}$ with
$x'g_yg_z=x$. By definition of gcds, we have $g_y\mid y, g_z\mid z$, hence there are $y',z'\in R\setminus\{0\}$ with
$y'g_y=y, z'g_z=z$. We have $g_y=\gcd(x,y)=\gcd(x'g_yg_z,y'g_y)\stackrel{\text{\tiny (10)}}{\sim}g_y\gcd(x'g_z,y')$, so
$\gcd(x'g_z,y')\sim1$ and symmetrically $\gcd(x'g_y,z')\sim1$. Now, by assumption, $x'g_yg_z=x\mid
yz=y'g_yz'g_z$, i.e., after canceling $g_yg_z$, we have $x'\mid y'z'$. Since $\gcd(x',y')\stackrel{\text{\tiny
    (6), (11)}}{\mid}\gcd(x'g_z,y')\sim1$, part (9) implies $\gcd(x',y')\sim1$, which, together with $x'\mid y'z'$, implies via part (12)
that $x'\mid z'$. Symmetrically, we get $\gcd(x',z')\sim1$, which, together with $x'\mid z'=z'\cdot 1$, yields again via (12)
that $x'\mid 1$, and by (9) we have $x'\sim1$. Therefore, the previous relation $x=x'g_yg_z\sim1\cdot
g_yg_z=g_yg_z=\gcd(x,y)\gcd(x,z)$ is exactly the intermediate result.\\
Using commutativity (6) and associativity (7) repeatedly and finally the intermediate assumption of coprimality
$g=\gcd(a,\gcd(b,c))\sim1$, we have: 
\[\gcd(g_b,g_c)=
\gcd(\gcd(a,b),\gcd(a,c))\stackrel{\text{\tiny (6), (7)}}{\sim}
\gcd(\gcd(a,a),\gcd(b,c))\stackrel{\text{\tiny (8)}}{\sim}
\gcd(a,\gcd(b,c))\sim1.
\]
Thus $RHS\stackrel{\text{\tiny (5)}}{\sim}\frac{g_bg_c}{\gcd(g_b,g_c)}\sim\frac{g_bg_c}{1}=g_bg_c$. 
Next $LHS=\gcd(a,\lcm(b,c))\mid\lcm(b,c)\mid bc$, which we call $\pentagram_1$. By definition of gcd, $LHS\mid a$, i.e. there is $a' \in R$ with
$a'LHS=a$, hence by (6) and (11): $\gcd(LHS,b)\mid\gcd(a'LHS,b)=\gcd(a,b)=g_b$ and symmetrically $\gcd(LHS,c)\mid
g_c$. By (6) and another twofold application of (11), this yields $\pentagram_2$:
$\gcd(\gcd(LHS,b),\gcd(LHS,c))\mid\gcd(g_b,g_c) \sim 1$.\\ 
Setting $x=LHS,y=b,z=c$ in the intermediate result, $\pentagram_1$ and $\pentagram_2$ are the two conditions needed to apply
the intermediate result, which gives $LHS\sim\gcd(LHS,b)\gcd(LHS,c)\mid g_bg_c=RHS$.\\
Now mutual divisibility (2) proves the claim $LHS\sim RHS$ in the coprime case.\\
Let $g=\gcd(a,\gcd(b,c))$ (possibly $\not\sim 1$). Since the zero cases are already resolved, by (4) we may assume
$g\neq 0$. There are $a', b', c'\in R\setminus\{0\}$ such that $a'g=a, b'g=b, c'g=c$. Set
$g'=\gcd(a',\gcd(b',c'))\stackrel{\text{\tiny (4)}}{\neq} 0$. Tracing the definitions of gcd (details left to the
reader), we deduce $g'g\mid c'g=c$ and $g'g\mid b'g=b$, hence $g'g\mid\gcd(b,c)$, and further $g'g\mid a'g=a$, thus
$g'g\mid\gcd(a,\gcd(b,c))=g$, i.e. there is a $g''$ with $g''g'g=g$. Canceling $g\neq 0$ in the domain $R$ yields
$g''g'=1$, hence $g'$ is a unit and $g'\sim 1$. Thus $a', b', c'$ satisfy the coprime condition $g'\sim1$, and we may
apply the special case that we just proved. We have 
\begin{gather*}
\gcd(a,\lcm(b,c))=
\gcd(a'g,\lcm(b'g,c'g))\stackrel{\text{\tiny (10) twice}}{\sim}
g\gcd(a',\lcm(b',c'))\stackrel{\text{\tiny coprime case}}{\sim}\\
g\lcm(\gcd(a',b'),\gcd(a',c'))\stackrel{\text{\tiny (10) thrice}}{\sim}
\lcm(\gcd(a'g,b'g),\gcd(a'g,c'g))=
\lcm(\gcd(a,b),\gcd(a,c)).
\end{gather*}
For the second distributivity law, we again treat the three zero cases explicitly. We may assume that all terms
involved are now nonzero. We apply the distributivity result just proven with $(a,b,c)$ replaced by $(\lcm(a,b),a,c)$ and
then absorption (15):
\[
\gcd(\lcm(a,b),\lcm(a,c))\stackrel{\text{\tiny dist}}{\sim}
\lcm(\gcd(\lcm(a,b),a),\gcd(\lcm(a,b),c))\stackrel{\text{\tiny (3), (6), (15)}}{\sim}
\lcm(a,\gcd(\lcm(a,b),c)).
\]
Apply the previous distributivity again with $(a,b,c)$ replaced by $(c,a,b)$:
\[
\gcd(\lcm(a,b),c)\stackrel{\text{\tiny (6)}}{\sim}
\gcd(c,\lcm(a,b))\stackrel{\text{\tiny dist}}{\sim}
\lcm(\gcd(c,a),\gcd(c,b))\stackrel{\text{\tiny (6) twice}}{\sim}
\lcm(\gcd(a,c),\gcd(b,c)).
\]
Substituting this into the previous equation yields
\begin{gather*}
\gcd(\lcm(a,b),\lcm(a,c))\sim
\lcm(a,\gcd(\lcm(a,b),c))\stackrel{\text{\tiny subst}}{=}
\lcm(a,\lcm(\gcd(a,c),\gcd(b,c)))\stackrel{\text{\tiny (7)}}{\sim}\\
\lcm(\lcm(a,\gcd(a,c)),\gcd(b,c))\stackrel{\text{\tiny (15)}}{\sim}
\lcm(a,\gcd(b,c)).
\end{gather*}
This is the second distributivity law.
\end{enumerate}
\end{proof}

\bigskip
\newpage
\section{Uniqueness of $(c,d)$ up to associates in the gf and adequate properties}
\label{sec:uniquenessInGFAdequateSplitting}

First, I repeat an argument from \cite[p. 460]{LP20} for easier reference.

\begin{lemma}\label{lem:gfImpliesAdequateForPairs}
Let $R$ be a B\'{e}zout domain and $a,b\in R\setminus\{0\}$. If $c,d\in R$ satisfy the gf condition for $(a,b)$, then
they satisfy the adequate condition.
\end{lemma}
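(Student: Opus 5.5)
We keep the same pair $(c,d)$. The conditions $a=cd$ and $\gcd(c,b)\sim1$ are common to gf and to the B\'{e}zout form of adequate, so they need no argument. What remains is the third adequate condition: for every non-unit $d'\in R\setminus R^{\times}$ with $d'\mid d$, we must show $\gcd(d',b)\not\sim1$. We argue by contradiction and assume $\gcd(d',b)\sim1$ for some such $d'$.

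The key input is the gf hypothesis $b\in\sqrt{(d)}$. It gives some $n\geq1$ and $e\in R$ with $b^n=ed$. Since $d'\mid d$, this yields $d'\mid b^n$. From $\gcd(d',b)\sim1$ we want $\gcd(d',b^n)\sim1$. We get this by inducting on $n$ with the second statement of (12): $\gcd(d',b)\sim1$ implies $\gcd(d',b\cdot b^{k})\sim\gcd(d',b^{k})$, which reduces inductively to $\gcd(d',b)\sim1$. This is the same computation carried out in the proof of (14).

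With $\gcd(d',b^n)\sim1$ and $d'\mid b^n$, part (2) gives $\gcd(d',b^n)\sim d'$, so $d'\sim1$. This contradicts $d'$ being a non-unit, and the adequate condition holds. Alternatively, apply the first statement of (12) to $d'\mid b^n\cdot1$ to get $d'\mid1$, and then (9) gives $d'\sim1$.

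The argument never uses the B\'{e}zout property beyond the equivalence of the two formulations of adequate stated in Def.~\ref{def:GCDproperties}. It does not even need $\gcd(c,d)\sim1$. The only step that needs any care is the coprimality of $d'$ with powers of $b$, and that is routine from (12). Zero cases do not arise: $a,b\neq0$ forces $c,d\neq0$, and $d'\mid d$ forces $d'\neq0$.
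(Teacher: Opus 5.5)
Your proof is correct and follows essentially the paper's route: argue by contradiction, use $b^n\in(d)\subseteq(d')$ to get $d'\mid b^n$, and propagate the coprimality $\gcd(d',b)\sim1$ to powers of $b$. The paper packages the last step as $d'\mid d'\gcd(d'^{n-1},d''B)\sim\gcd(d'^n,b^n)\sim\gcd(d',b)^n\sim1$ via (10) and (14). You instead obtain $\gcd(d',b^n)\sim1$ directly from (12) and finish with (2), which is a marginally more direct variant of the same argument.
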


\begin{proof}
Recall the gf condition: $a=cd$, $\gcd(c,b)\sim 1$, and $b\in\sqrt{(d)}$. The first two properties of gf and adequate
coincide. Let $d'\in R\setminus R^{\times}$ with $d'\mid d$, i.e. there is some $d''\in R$ with
$d'd''=d$. From $b\in\sqrt{(d)}$, we get $n\in\mathbb{N}_{>0}, B\in R$ with $b^n=dB$, and thus $b^n=dB=d'd''B$. Assume
for contradiction that $\gcd(d',b)\sim 1$. Then (note that $d'^{n-1}\in R$ is well-defined since $n>0$)
\[
d'\mid
d'\gcd(d'^{n-1},d''B)\stackrel{\text{\tiny (10)}}{\sim}
\gcd(d'^n,d'd''B)=
\gcd(d'^n,b^n)\stackrel{\text{\tiny (14)}}{\sim}
\gcd(d',b)^n\sim
1^n=
1.
\]
By (9) we had $d'\sim 1$ contradicting $d'\not\in R^{\times}$. Thus $\gcd(d',b)\not\sim 1$. This finishes the proof.
\end{proof}

Next I prove that $(c,d)$ satisfying the adequate (resp. gf) condition for a pair $(a,b)$ are unique up to associates.

\begin{lemma}\label{lem:uniquenessInAdequateSplitting}
Let $R$ be a B\'{e}zout domain. Given $a, b\in R\setminus\{0\}$ and two pairs $(c_1, d_1), (c_2, d_2) \in R^2$
satisfying the adequate conditions for $(a,b)$, we have $c_1\sim c_2$ and $d_1\sim d_2$.
\end{lemma}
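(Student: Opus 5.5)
The plan is to show $c_1\sim c_2$ first and then to obtain $d_1\sim d_2$ by cancellation. From $c_1d_1=a=c_2d_2$ and $a\neq 0$, all four factors are nonzero. So once $c_1\sim c_2$ is known, cancelling in the domain gives $d_1\sim d_2$. By symmetry it suffices to prove $c_1\mid c_2$; mutual divisibility (2) then finishes.

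To prove $c_1\mid c_2$, set $h=\gcd(c_1,d_2)$. The aim is $h\sim 1$. Granting this, $c_1\mid a=c_2d_2$ together with $\gcd(c_1,d_2)\sim1$ gives $c_1\mid c_2$ by the first part of (12).

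Suppose instead that $h\not\sim1$. Since $h\mid d_2$ and $h$ is a non-unit, the adequate condition for the pair $(c_2,d_2)$, applied with $d'=h$, yields $\gcd(h,b)\not\sim1$. On the other hand $h\mid c_1$, so by (6) and (11) we get $\gcd(h,b)\mid\gcd(c_1,b)\sim 1$. By (9) this gives $\gcd(h,b)\sim1$, a contradiction. This is exactly the argument of Lem.~\ref{lem:adequateFactorsAreCoprime}, with the roles of the two splittings mixed: the $c$ comes from the first pair and the $d$ from the second. Hence $h\sim 1$.

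Exchanging the indices gives $c_2\mid c_1$, so $c_1\sim c_2$ by (2). Writing $c_2=uc_1$ with $u$ a unit, the equation $c_1d_1=uc_1d_2$ and cancellation of $c_1\neq0$ give $d_1=ud_2$, so $d_1\sim d_2$.

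No step is a real obstacle here. The only point needing care is to use the coprimality condition from one splitting and the ``non-unit divisor'' condition from the other. The B\'{e}zout hypothesis is not even needed beyond the GCD-domain identities already established.
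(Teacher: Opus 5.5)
Your proof is correct, but it takes a different and more direct route than the paper's.

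The paper sets $d=\gcd(d_1,d_2)$ and $c=\frac{a}{d}$, and shows $c\sim\lcm(c_1,c_2)$ using (5), (10) and (13). It then verifies that $(c,d)$ is itself an adequate pair for $(a,b)$; the condition $\gcd(c,b)\sim1$ there requires the distributivity law (16). Finally it observes that $\frac{c}{c_1}$ divides $d_1$ while being coprime to $b$, hence is a unit. This gives $c_1\sim c\sim c_2$.

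You instead prove $\gcd(c_1,d_2)\sim1$ directly, by rerunning the argument of Lem.~\ref{lem:adequateFactorsAreCoprime} with the indices mixed. The first part of (12) then yields $c_1\mid c_2$ at once. Your version needs no lcms, no product formula and no distributivity, and it shows the lemma rests on the same mechanism as Lem.~\ref{lem:adequateFactorsAreCoprime}. It also works with the ideal-theoretic form of adequacy, since $h\mid c_1$ and $bR+c_1R=R$ already force $bR+hR=R$.

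What the paper's detour buys is an additional structural fact: adequate pairs for $(a,b)$ are closed under $\bigl((c_1,d_1),(c_2,d_2)\bigr)\mapsto\bigl(\lcm(c_1,c_2),\gcd(d_1,d_2)\bigr)$. That is more than uniqueness needs. As you note, both arguments use only GCD-domain identities, so with adequacy read in its gcd form, neither actually uses the B\'{e}zout hypothesis.
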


\begin{proof}
Note that $0\neq a=c_1d_1=c_2d_2$ implies that all occurring denominators are nonzero. 

Set $d=\gcd(d_1,d_2), \tilde{d}_1=\frac{d_1}{d}, \tilde{d}_2=\frac{d_2}{d}$, and $c=\frac{a}{d}$. We start with some
auxiliary calculations.
\begin{gather*}
\gcd(\tilde{d}_1,\tilde{d}_2)=
\gcd\left(\frac{d_1}{d},\frac{d_2}{d}\right)\stackrel{\text{\tiny (13)}}{\sim}
\frac{\gcd\left(d_1,d_2\right)}{d}=
\frac{d}{d}=
1,\\
\tilde{d}_1\tilde{d}_2\lcm(c_1,c_2)\stackrel{\text{\tiny (10)}}{\sim}
\lcm(c_1\tilde{d}_1\tilde{d}_2,c_2\tilde{d}_1\tilde{d}_2)=
\lcm\left(\frac{a}{d_1}\cdot\tilde{d}_1\tilde{d}_2,\frac{a}{d_2}\cdot\tilde{d}_1\tilde{d}_2\right)=
\lcm\left(\frac{a\tilde{d}_1\tilde{d}_2}{d\tilde{d}_1},\frac{a\tilde{d}_1\tilde{d}_2}{d\tilde{d}_2}\right)\stackrel{\text{\tiny (10)}}{\sim}\\
\frac{a}{d}\lcm\left(\tilde{d}_2,\tilde{d}_1\right)\stackrel{\text{\tiny (6), (5)}}{\sim}
\frac{a}{d}\cdot\frac{\tilde{d}_1\tilde{d}_2}{\gcd\left(\tilde{d}_1,\tilde{d}_2\right)}\sim
c\cdot\frac{\tilde{d}_1\tilde{d}_2}{1}=
c\tilde{d}_1\tilde{d}_2.
\end{gather*}
Canceling in the domain $R$ yields $c\sim\lcm(c_1,c_2)$. Now we check that $(c,d)$ also satisfies the adequate conditions
for $(a,b)$. The definition of $c$ immediately gives $a=cd$.
\[
\gcd(c,b)\stackrel{\text{\tiny (3)}}{\sim}
\gcd(\lcm(c_1,c_2),b)\stackrel{\text{\tiny (16), (6)}}{\sim}
\lcm(\gcd(c_1,b),\gcd(c_2,b))\stackrel{\text{\tiny adequate twice}}{\sim}
\lcm(1,1)\stackrel{\text{\tiny (8)}}{\sim}
1.
\]
By the definition of gcd, we have $d=\gcd(d_1,d_2)\mid d_1$. Given any $d'\in R\setminus R^{\times}$ with $d'\mid d$, we
immediately get $d'\mid d\mid d_1$ and can apply the adequate conditions for $(c_1,d_1)$. We get $\gcd(d',b)\not\sim1$
as required. Thus $(c,d)$ is adequate for $(a,b)$.

Next, consider $d_1'=\frac{c}{c_1}$. 
Since $d_1'=\frac{c}{c_1}\mid c$, we have 
\[
\gcd(d_1',b)\stackrel{\text{\tiny (6), (11)}}{\mid}
\gcd(c,b)\stackrel{\text{\tiny $(c,d)$ adequate}}{\sim} 
1\stackrel{\text{\tiny (9)}}{\Rightarrow} 
\gcd(d_1',b)\sim
1.
\]
From $c\mid a$, we get $d_1'=\frac{c}{c_1}\mid\frac{a}{c_1}=d_1$. Assume $d_1'\not\in R^{\times}$. Then $(c_1,d_1)$
adequate for $(a,b)$ implies $\gcd(d_1',b)\not\sim1$, a contradiction. Thus $d_1'\sim 1$. From $d_1'c_1=c$ we then get
$c_1\sim c$. By symmetry, also $c\sim c_2$ holds, and by transitivity of $\sim$ finally $c_1\sim c_2$. 
$d_1=\frac{a}{c_1}\sim\frac{a}{c_2}=d_2$ finishes the proof.
\end{proof}

\begin{cor}\label{cor:uniquenessInGFSplitting}
Let $R$ be a B\'{e}zout domain. Given $a, b\in R\setminus\{0\}$ and two pairs $(c_1, d_1), (c_2, d_2) \in R^2$
satisfying the gf conditions for $(a,b)$, we have $c_1\sim c_2$ and $d_1\sim d_2$.
\end{cor}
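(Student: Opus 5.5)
The plan is to reduce the corollary directly to Lem.~\ref{lem:uniquenessInAdequateSplitting}. Let $a,b\in R\setminus\{0\}$, and let $(c_1,d_1)$ and $(c_2,d_2)$ both satisfy the gf conditions for $(a,b)$: $a=c_id_i$, $\gcd(c_i,b)\sim1$ and $b\in\sqrt{(d_i)}$ for $i=1,2$. Since $R$ is a B\'{e}zout domain and $a,b\neq0$, Lem.~\ref{lem:gfImpliesAdequateForPairs} applies to each pair separately. It shows that $(c_1,d_1)$ and $(c_2,d_2)$ both satisfy the adequate conditions for $(a,b)$, in the B\'{e}zout reformulation of Def.~\ref{def:GCDproperties}: $a=c_id_i$, $\gcd(c_i,b)\sim1$, and every non-unit $d'\mid d_i$ satisfies $\gcd(d',b)\not\sim1$.

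With this in hand, the hypotheses of Lem.~\ref{lem:uniquenessInAdequateSplitting} hold verbatim: the ring is a B\'{e}zout domain, $a,b$ are nonzero, and there are two adequate pairs for $(a,b)$. That lemma gives $c_1\sim c_2$ and $d_1\sim d_2$, which is exactly the claim.

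There is no genuine obstacle here; all the work was done in the two preceding lemmas. The only point to check is that the form of the adequate condition produced by Lem.~\ref{lem:gfImpliesAdequateForPairs} matches the form assumed in Lem.~\ref{lem:uniquenessInAdequateSplitting}. Both use the divisibility and gcd formulation ($d'\mid d$, $d'$ a non-unit, implies $\gcd(d',b)\not\sim1$), so they agree and no translation through the ideal-theoretic definition is needed.
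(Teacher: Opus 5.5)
Your proposal is correct and is exactly the paper's proof: apply Lem.~\ref{lem:gfImpliesAdequateForPairs} to each pair $(c_i,d_i)$ to obtain the adequate conditions, then invoke Lem.~\ref{lem:uniquenessInAdequateSplitting} to get $c_1\sim c_2$ and $d_1\sim d_2$. Your remark that both lemmas use the same divisibility formulation of the adequate condition is accurate, so nothing further needs to be translated.
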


\begin{proof}
We apply Lem.~\ref{lem:gfImpliesAdequateForPairs} to show that the $(c_i,d_i)$ have adequate properties for $(a,b)$ and
then Lem.~\ref{lem:uniquenessInAdequateSplitting} to get the association.
\end{proof}

\bigskip
\section{Good factorization implies lcm splitting}
\label{sec:gfLcmSplitting}

I originally found a proof that gf implies lcm splitting. The global structure is similar to the proof of adequate
implies lcm splitting in Thm.~\ref{thm:adequateImpliesLCMSplitting}, but the details about the radical are different and
could be interesting for algorithmic versions.

\begin{lemma}\label{lem:gfFactorsAreCoprime}
Let $R$ be a GCD domain. Given $a, b\in R\setminus\{0\}$ and some $c, d \in R$ satisfying the gf conditions,
i.e. $a = cd, \gcd(c,b)\sim1$ and $b\in\sqrt{(d)}$, we have $\gcd(c,d)\sim1$.
\end{lemma}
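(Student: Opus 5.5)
The plan is to argue by contradiction, as in Lem.~\ref{lem:adequateFactorsAreCoprime}, but to replace the adequate condition on divisors of $d$ with the radical condition $b\in\sqrt{(d)}$. Set $g=\gcd(c,d)$ and assume $g\not\sim 1$. Since $a=cd\neq 0$, both $c$ and $d$ are nonzero, and so $g\neq 0$ by (4). From $b\in\sqrt{(d)}$ I obtain some $n\in\mathbb{N}_{>0}$ and $B\in R$ with $b^n=dB$. Because $g\mid d$, this gives $g\mid b^n$.

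The key step is to show that $g$ is coprime to $b^n$, which will contradict $g\mid b^n$. First, $g\mid c$ gives $\gcd(g,b)\mid\gcd(c,b)\sim1$ by (6), (11); then (9) yields $\gcd(g,b)\sim 1$. Next I upgrade this to $\gcd(g,b^n)\sim1$. To do so, I apply the second identity of (12), namely $\gcd(g,b)\sim1\Rightarrow\gcd(g,b\cdot b^{k})\sim\gcd(g,b^{k})$, and induct on $k$, exactly as in the proof of (14). One could instead note $\gcd(g,b^n)\mid\gcd(g^n,b^n)\sim\gcd(g,b)^n\sim 1$ via (11) and (14), which is perhaps shorter.

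To finish, combine $g\mid g$ and $g\mid b^n$ with (1) to get $g\mid\gcd(g,b^n)\sim 1$. Then (9) gives $g\sim1$, contradicting the assumption; hence $\gcd(c,d)\sim1$. No step seems hard. The only point needing care is passing from coprimality with $b$ to coprimality with $b^n$, and (12) or (14) handles that routinely. Alternatively, the lemma follows at once in the B\'{e}zout case from Lem.~\ref{lem:gfImpliesAdequateForPairs} and Lem.~\ref{lem:adequateFactorsAreCoprime}. Here, though, $R$ is only assumed GCD, so I would give the direct argument above, which uses only Prop.~\ref{prop:gcdIdentities}.
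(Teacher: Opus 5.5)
Your proof is correct and takes essentially the paper's route: both get $\gcd(c,d)\mid b^n$ from $b^n=dB$, then use (11) and (14) to raise $\gcd(c,b)\sim1$ to $n$-th powers, which the paper does in the single chain $\gcd(c,d)\mid\gcd(c,b^n)\mid\gcd(c^n,b^n)\sim\gcd(c,b)^n\sim1$. The contradiction framing adds nothing, because your chain $g\mid\gcd(g,b^n)\sim1$ already proves $g\sim1$ directly.
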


\begin{proof}
$b\in\sqrt{(d)}$ implies by definition of the radical the existence of $n\in\mathbb{N}_{>0}, B\in R$ with $b^n=dB$.
\[
\gcd(c,d)\stackrel{\text{\tiny (11)}}{\mid}
\gcd(c,dB)=
\gcd(c,b^n)\stackrel{\text{\tiny (6), (11)}}{\mid}
\gcd(c^n,b^n)\stackrel{\text{\tiny (14)}}{\sim}
\gcd(c,b)^n\sim
1^n=1.
\]
This proves the lemma by (9).
\end{proof}

\begin{theorem} \label{thm:gfImpliesLCMSplitting}
A gf B\'{e}zout domain $R$ is lcm splitting.
\end{theorem}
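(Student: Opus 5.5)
The quickest route is Lem.~\ref{lem:gfImpliesAdequateForPairs}: for every pair of nonzero elements, a gf witness is also an adequate witness. So a gf B\'{e}zout domain is adequate, and Thm.~\ref{thm:adequateImpliesLCMSplitting} finishes the argument. Since the appendix promises a direct proof with the same global structure but radical-specific details, I would also write that version, which mirrors the earlier proof step by step.

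Let $a,b\neq0$ (zero cases by Rem.~\ref{rem:firstObsLCMSplitting}), with $g=\gcd(a,b)$, so that $l\sim\frac{a}{g}\,g\,\frac{b}{g}$. First apply gf to $\left(g,\frac{b}{g}\right)$. This gives $g=g_1g_2$ with $\gcd\left(g_1,\frac{b}{g}\right)\sim1$ and $\frac{b}{g}\in\sqrt{(g_2)}$, and Lem.~\ref{lem:gfFactorsAreCoprime} gives $\gcd(g_1,g_2)\sim1$. Then apply gf to $\left(g_2,\frac{a}{g}\right)$. This gives $g_2=g_1'g_2'$ with $\gcd\left(g_2',\frac{a}{g}\right)\sim1$ and $\frac{a}{g}\in\sqrt{(g_1')}$. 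The coprimality facts (A)--(D) then follow exactly as in the proof of Thm.~\ref{thm:adequateImpliesLCMSplitting}, using (6), (9), (11), (13) and Lem.~\ref{lem:gfFactorsAreCoprime}.

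The main obstacle is the analogue of (E), namely $g_1'\sim1$; this is where the radicals come in. From $\frac{a}{g}\in\sqrt{(g_1')}$ write $\left(\frac{a}{g}\right)^n=g_1'A$, and from $\frac{b}{g}\in\sqrt{(g_2)}$ write $\left(\frac{b}{g}\right)^m=g_2B$. Because $g_1'\mid g_2$, we get $g_1'\mid\left(\frac{b}{g}\right)^m$. Hence
\[
g_1'\mid\gcd\left(\left(\tfrac{a}{g}\right)^{nm},\left(\tfrac{b}{g}\right)^{nm}\right)\sim\gcd\left(\tfrac{a}{g},\tfrac{b}{g}\right)^{nm}\sim1,
\]
using (11) to raise to common exponents, then (14), and then (B). By (9) this gives $g_1'\sim1$. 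No contradiction argument is needed, which is the promised difference from the adequate proof and is more algorithmic.

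With $g_1'\sim1$ in hand, set $c=\frac{a}{g}g_1$ and $d=g_2'\frac{b}{g}$. Steps $(\alpha)$--$(\gamma)$ via (12) give $\gcd(c,d)\sim1$, exactly as before. Finally, $cd\sim l$, $c\mid a=\frac{a}{g}g_1g_2$ and $d\mid b=g_1g_1'g_2'\frac{b}{g}$ are verified by direct inspection.
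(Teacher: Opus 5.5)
Your proposal is correct, and your direct argument is essentially the paper's own proof, including the contradiction-free step $g_1'\mid\gcd\bigl((\frac{a}{g})^{mn},(\frac{b}{g})^{mn}\bigr)\sim\gcd(\frac{a}{g},\frac{b}{g})^{mn}\sim 1$; the divisibility $x^n\mid x^{nm}$ used there is immediate and not an instance of (11). Your shortcut via Lem.~\ref{lem:gfImpliesAdequateForPairs} and Thm.~\ref{thm:adequateImpliesLCMSplitting} is also valid, and the paper uses this same combination elsewhere to deduce lcm splitting from gf.
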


\begin{proof}
Let $a, b \in R\setminus\{0\}$ be arbitrary (cf. Rem.~\ref{rem:firstObsLCMSplitting}). Set
$g=\gcd(a,b), l=\lcm(a,b)$. From (5), $ab\sim gl$, we deduce $l\sim\frac{ab}{g}=\frac{a}{g}g\frac{b}{g}$.

Now apply the gf property to the pair $\left(g, \frac{b}{g}\right)$ to get $g=g_1g_2$ with
$\gcd\left(g_1,\frac{b}{g}\right)\sim1$ and $\frac{b}{g}\in\sqrt{(g_2)}$. Apply gf a second time to $\left(g_2,
\frac{a}{g}\right)$ to get $g_2=g_1'g_2'$ with $\gcd\left(g_2',\frac{a}{g}\right)\sim1$ and
$\frac{a}{g}\in\sqrt{(g_1')}$. Overall, we have $l\sim\frac{a}{g}g\frac{b}{g}=\frac{a}{g}g_1g_1'g_2'\frac{b}{g}$, and we
will show some coprimality properties among these factors:
\begin{enumerate}
\item[(A)] $\gcd\left(\frac{a}{g},g_2'\right)\sim1$ follows directly from the 2nd application of gf and (6). 
\item[(B)] Setting $c=g$ in (13), we get $\gcd\left(\frac{a}{g},\frac{b}{g}\right)\sim\frac{\gcd(a,b)}{g}=\frac{g}{g}=1$. 
\item[(C)] 
$\gcd(g_1,g_2')\stackrel{\text{\tiny (11)}}{\mid}
\gcd(g_1,g_1'g_2')=
\gcd(g_1,g_2)\sim 1$, 
where the last step follows from Lem.~\ref{lem:gfFactorsAreCoprime}. By (9) we get $\gcd(g_1,g_2')\sim1$.
\item[(D)] $\gcd\left(g_1,\frac{b}{g}\right)\sim1$ follows directly from the 1st application of gf.
\item[(E)] By the 1st application of gf, we have some $n\in\mathbb{N}_{>0}, B\in R$ such that $g_2B=\left(\frac{b}{g}\right)^n$. 
By the 2nd application of gf, we have some $m\in\mathbb{N}_{>0}, A\in R$ such that $g_1'A=\left(\frac{a}{g}\right)^m$. Thus 
\begin{gather*}
g_1'\mid g_1'g_2'=g_2\mid g_2B=\left(\frac{b}{g}\right)^n\mid\left(\frac{b}{g}\right)^{mn}\quad \text{ and } \quad
g_1'\mid g_1'A=\left(\frac{a}{g}\right)^m\mid\left(\frac{a}{g}\right)^{mn}.
\end{gather*}
By (1), we have $g_1'\mid
\gcd\left(\left(\frac{a}{g}\right)^{mn},\left(\frac{b}{g}\right)^{mn}\right)\stackrel{\text{\tiny (14)}}{\sim}
\gcd\left(\frac{a}{g},\frac{b}{g}\right)^{mn}\stackrel{\text{\tiny (B)}}{\sim}
1^{mn}=1$. By (9): $g_1'\sim1$.
\end{enumerate}

By (E), only four nontrivial factors remain: $l\sim\frac{a}{g}g_1g_2'\frac{b}{g}$. We set $c=\frac{a}{g}g_1$ and $d=g_2'\frac{b}{g}$. 
Now we apply (12) several times based on the coprimality results (A)--(D):
\[
(\alpha) \gcd\left(\frac{a}{g},g_2'\right)\stackrel{\text{\tiny (A)}}{\sim}1 \stackrel{\text{\tiny (12)}}{\Rightarrow}
\gcd\left(\frac{a}{g},g_2'\frac{b}{g}\right)\sim
\gcd\left(\frac{a}{g},\frac{b}{g}\right)\stackrel{\text{\tiny (B)}}{\sim}
1,
\]
\[
(\beta) \gcd\left(g_1,g_2'\right)\stackrel{\text{\tiny (C)}}{\sim}1 \stackrel{\text{\tiny (12)}}{\Rightarrow}
\gcd\left(g_1,g_2'\frac{b}{g}\right)\sim
\gcd\left(g_1,\frac{b}{g}\right)\stackrel{\text{\tiny (D)}}{\sim}
1,
\]
\[
(\gamma) \gcd\left(\frac{a}{g},g_2'\frac{b}{g}\right)\stackrel{\text{\tiny $(\alpha)$}}{\sim}1 \stackrel{\text{\tiny (6), (12)}}{\Rightarrow}
\gcd\left(c,d\right)=
\gcd\left(\frac{a}{g}g_1,g_2'\frac{b}{g}\right)\sim
\gcd\left(g_1,g_2'\frac{b}{g}\right)\stackrel{\text{\tiny $(\beta)$}}{\sim}
1.
\]
Having shown $\gcd\left(c,d\right)\sim1$, we verify three further properties.
$cd=\frac{a}{g}g_1g_2'\frac{b}{g}\sim l=\lcm(a, b)$ by construction. From $a=\frac{a}{g}g=\frac{a}{g}g_1g_2$, we
get $c=\frac{a}{g}g_1\mid a$, and finally, from $b=g\frac{b}{g}=g_1g_1'g_2'\frac{b}{g}$, we get $d=g_2'\frac{b}{g}\mid b$.
Since $a,b$ were arbitrary, $R$ is an lcm splitting B\'{e}zout domain.
\end{proof}

\bigskip
\section{Examples of non-UFD B\'{e}zout domains that are lcm splitting}
\label{sec:furtherExamples}

I will give three examples of non-UFD B\'{e}zout domains that are lcm splitting.
\begin{enumerate}
\item Let $k$ be a field and $R_1=\bigcup_{n=1}^{\infty}k[[x^{1/n}]]$, i.e. \textbf{formal power series} with rational exponents of bounded 
denominators, also called \textbf{Puiseux series}. It is not hard to see that any $r\in R_1\setminus\{0\}$ can be written in the form
  $r=cx^{q}$ for $c\in R_1^{\times}$ a unit and $q\in\mathbb{Q}_0^+$. In this ring, divisibility is a total order. 
  In fact, $R_1$ is a valuation domain with valuation in $\mathbb{Q}_0^+$ and B\'{e}zout. 
  Explicitly, given any $a=c_ax^{q_a},b=c_bx^{q_b}\in R_1\setminus\{0\}$ and assuming wlog
  $q_a\geq q_b$, we have $\gcd(a,b)\sim b, \lcm(a,b)\sim a$. 
  An lcm splitting is given by $c=a,d=1$, which is easy to verify. It is, however, not a UFD, since it lacks
  atoms. Any candidate for an irreducible element $0\neq r=cx^{q}$ must have $q>0$ (otherwise it is a unit), and
  $r=(cx^{q/2})\cdot x^{q/2}$ contradicts irreducibility. Since there are no irreducibles, there cannot exist a
  factorization into them.
\item Let $R_2=\mathcal{O}(\mathbb{C})$ be the \textbf{ring of entire functions} on the complex plane. $R_2$ is a
  B\'{e}zout domain by a well-known result of Helmer~\cite{Hel40}. By the \textbf{Weierstra{\ss} factorization theorem}
  we can express any entire function as an infinite product of elementary factors $E_k$. Hence $R_2$ is not a UFD, as
  factorizations in a UFD are necessarily finite. Since this factorization is, however, essentially unique, it can also
  be used to construct lcm splittings. I leave the general details to the reader, but give one illustrative example of
  the shortcomings of the effective procedure (algorithm for a UFD) presented on p.~\pageref{page:lcmSplittingAlgo}.

By the Weierstra{\ss} factorization theorem, we can construct holomorphic functions $f,g$ using elementary factors
$E_{k}$ as follows:
\[
f(z)=\prod\limits_{n=1}^\infty E_{p_n}\left(\frac{z}{a_n}\right),\quad
g(z)=\prod\limits_{n=1}^\infty E_{p_n}\left(\frac{z}{b_n}\right),
\]
where $(a_n)=(1,2,3,\ldots)$ is the sequence of positive integers, 
$(b_n)=(1,2,2,3,3,3,4,\ldots,k-1,\stackrel{k-\text{fold}}{\overbrace{k,\ldots,k}},k+1,\ldots)$ and 
$(p_n)=(2,2,2,\ldots)$ is the constant sequence $2$. The idea is that $f$ has zeros exactly at the positive integers 
$z_i=i\in\mathbb{Z}_{>0}$ with constant multiplicity $1$, and $g$ has the same zeros but with increasing multiplicities $i$. Set 
\[
a=a(z)=g\left(\frac{1}{2}(z+1)\right)\cdot g\left(\frac{1}{2}z\right)\cdot f\left(\frac{1}{2}(z+1)\right),
b=b(z)=g\left(\frac{1}{2}(z+1)\right)\cdot g\left(\frac{1}{2}z\right)\cdot f\left(\frac{1}{2}z\right).
\]
The zeros of $a,b$ are still at the positive integers. For an odd positive integer $z_o$, the multiplicity of $a$ is $1$ more than
that of $b$, i.e. $\mult_{z_o}(a)=\mult_{z_o}(b)+1$, and vice versa for even positive integers $z_e$:
$\mult_{z_e}(a)+1=\mult_{z_e}(b)$. Moreover, the multiplicities increase linearly:
$\mult_{2n-1}(b)=\mult_{2n}(a)=n$. The procedure on p.~\pageref{page:lcmSplittingAlgo} produces $c_{n},d_{n}$ in step
$n$ such that for an even positive integer $z_k=2k$, we have $\mult_{z_k}\gcd(c_{n},d_{n})>0$ if and only if $k\geq 2^{n}$. Since
we can take $k$ arbitrarily large, we see that this approach will never terminate globally. However, viewing each ``finite
place'' $z_k$ in isolation, it does terminate in the sense that $\mult_{z_k}\gcd(c_{n},d_{n})=0$ for fixed $k$ and all
sufficiently large $n$.

Using the factorizations directly, we set 
\[
c=g\left(\frac{1}{2}(z+1)\right)\cdot f\left(\frac{1}{2}(z+1)\right),
d=g\left(\frac{1}{2}z\right)\cdot f\left(\frac{1}{2}z\right).
\]
We claim that $(c,d)$ is an lcm splitting of $(a,b)$. Let $g=\gcd(c,d)$. If $g$ were not a unit, then $g$ would have a
zero at some $z_0\in\mathbb{C}$. However, all zeros of $c$ are at the odd positive integers and all zeros of $d$ are at
the even positive integers, so there cannot be a common zero $z_0$. Thus $g$ is a unit and can be chosen to be
$g=1$. The divisibility properties $c\mid a$ and $d\mid b$ are obvious. The lcm condition $cd\sim\lcm(a,b)$ follows
similarly by comparing orders of vanishing at the relevant points.

\item Let $R_3=\bar{\mathbb{Z}}$ be the ring of algebraic integers. According to \cite[Sec. 2.11]{vDM90}, it is a gf
  B\'{e}zout domain. Thus Thm.~\ref{thm:adequateImpliesLCMSplitting} and ``gf $\Rightarrow$ adequate''
  (cf. Rem.~\ref{rmk:propertiesOfGCDDomains}) immediately imply that it has lcm splittings. It is, however, instructive to
  look at gcds and lcm splittings explicitly.

Given explicit $a,b\in R_3\setminus\{0\}$, the $\gcd(a,b)$ might not exist in the ring of integers $\mathfrak{o}_K$ of 
$K=\mathbb{Q}(a,b)$. But there is always an explicitly constructible finite extension field $L/K$ such that $\gcd(a,b)$ 
exists in $\mathfrak{o}_L$. This does not mean that $\mathfrak{o}_L$ is itself a GCD domain or even B\'{e}zout, but 
only that enough prime ideals of $\mathfrak{o}_K$ become products of principal ideals of $\mathfrak{o}_L$ such that 
$\gcd(a,b)$ exists and $a,b$ admit suitable factorizations (not necessarily by irreducibles), facilitating the 
computation of $\gcd(a,b)$. Using these factorizations, it is possible to find good factorization splittings and 
ultimately suitable $c,d$ for lcm splitting.

Since irreducible elements of some $\mathfrak{o}_K$ split further in suitable extensions $\mathfrak{o}_L$, and this does
not stop after finitely many steps, $R_3=\bar{\mathbb{Z}}$ is not a UFD. In the spirit of an argument for $S$ in
Sec.~\ref{sec:counterexampleLCMSplitting2}, one can also consider any integers $a,k\in\mathbb{N}_{>1}$ and the infinite
sequence of refining factors $\left(a^{1/k},a^{1/k^2},a^{1/k^3},\ldots\right)$ to see directly that it is not a UFD.

In this example, the ring $R_3$ has the structure of a union: $\bar{\mathbb{Z}}=\bigcup_i\mathfrak{o}_i$, where one can even
assume that it is an ascending chain, i.e. $\mathfrak{o}_i\subset\mathfrak{o}_j$ for $i<j$ (assuming that the index
set is suitably ordered). The idea is that, for each pair $(a,b)$, there is some $\mathfrak{o}_i$ where $a,b$ factor
nicely enough (although possibly not into irreducibles) such that $\gcd(a,b), c, d$ exist in $\mathfrak{o}_i$ and such
that $\gcd(a,b), c, d$ keep the properties needed for lcm splitting in extension $\mathfrak{o}_j$ all the way up to
$\bar{\mathbb{Z}}$.

If one wanted to generalize this tower strategy to general GCD domains $R$, one would have to show first that $R$ admits
suitable towers. I do not see how this could be possible without using anything about the specifics of the ring in
question. It might also be necessary to make the tower depend on the given $a,b$. Note that Hutchins~\cite[Ex. 31,
  p. 66]{Hut81} mentions, for an example ring having similar properties as $R_1$, that any countable tower of PIDs is
B\'{e}zout. One would need some kind of converse theorem that any B\'{e}zout domain, potentially given some not to
restrictive conditions, can be expressed as a suitable tower of nice domains, but I am not aware of any result in that
direction. Anyway, that tower strategy might be worth looking at in further investigations of lcm splittings.

\end{enumerate}

\end{document}